\documentclass{amsproc}
\usepackage{amssymb}
\usepackage{amsbsy}
\usepackage{amscd}

\usepackage{nicefrac}

%

\makeatletter
%
%
%
%
%
%
%
%

\hyphenation{Gro-then-dieck}

\hfuzz1pc 
%
\renewcommand{\thesubsection}{\thesection(\@roman\c@subsection)}
\makeatother
%

%

\usepackage{verbatim}
\usepackage{version}
\usepackage{color}
\newenvironment{NB}{
\color{red}{\bf NB}. \footnotesize
}{}
\excludeversion{NB}
\newenvironment{NB2}{
\color{blue}{\bf NB}. \footnotesize
}{}
\newtheorem{Theorem}[equation]{Theorem}
\newtheorem{Corollary}[equation]{Corollary}
\newtheorem{Lemma}[equation]{Lemma}
\newtheorem{Proposition}[equation]{Proposition}

\theoremstyle{definition}

\theoremstyle{remark}
\newtheorem{Remark}[equation]{Remark}
\newtheorem{Remarks}[equation]{Remarks}




\numberwithin{equation}{section}

\newcommand{\thmref}[1]{Theorem~\ref{#1}}
\newcommand{\secref}[1]{\S\ref{#1}}
\newcommand{\lemref}[1]{Lemma~\ref{#1}}
\newcommand{\propref}[1]{Proposition~\ref{#1}}
\newcommand{\corref}[1]{Corollary~\ref{#1}}
\newcommand{\subsecref}[1]{\S\ref{#1}}

\newcommand{\remsref}[1]{Remarks~\ref{#1}}
%

\newcommand{\defeq}{\overset{\operatorname{\scriptstyle def.}}{=}}
\newcommand{\C}{{\mathbb C}}
\newcommand{\Z}{{\mathbb Z}}
\newcommand{\Q}{{\mathbb Q}}
\newcommand{\R}{{\mathbb R}}
\newcommand{\proj}{{\mathbb P}}

\newcommand{\SL}{\operatorname{\rm SL}}
\newcommand{\SU}{\operatorname{\rm SU}}



\newcommand{\Hom}{\operatorname{Hom}}

\newcommand{\Wedge}{{\textstyle \bigwedge}}
\newcommand{\ch}{\operatorname{ch}}
%
%
\newcommand{\shfO}{\mathcal O}
\newcommand{\Snorm}{S^{\mathrm{norm}}}
\newcommand{\Tnorm}{T^{\mathrm{norm}}}

\newcommand{\calE}{\mathcal E}
\newcommand{\bt}{\mathbf t}
\newcommand{\ba}{\mathbf a}
\newcommand{\bq}{\mathbf q}
\newcommand{\uE}{\calE/[0]}
\newcommand{\cP}{\mathcal P}

\renewcommand{\MR}[1]{}

\setcounter{tocdepth}{1}

\usepackage[bookmarks=false]{hyperref}

\usepackage{url}

\begin{document}

\author{Hiraku Nakajima}
\title[Refined Chern-Simons theory and Hilbert schemes]
{Refined Chern-Simons theory and Hilbert schemes of points on the plane
}
\address{Research Institute for Mathematical Sciences,
Kyoto University, Kyoto 606-8502,
Japan}
\email{nakajima@kurims.kyoto-u.ac.jp}
\thanks{Supported by the Grant-in-aid
for Scientific Research (No.23340005), JSPS, Japan.
}
\subjclass[2010]{Primary 14C05, 57M27; Secondary 14D21, 33D52}
\keywords{Hilbert schemes of points, Chern-Simons theory}
\begin{abstract}
  Aganagic and Shakirov propose a refinement of the $SU(N)$ Chern-Simons
  theory for links in three manifolds with $S^1$-symmetry, such as torus
  knots in $S^3$, based on deformation of the $S$ and $T$ matrices,
  originally found by Kirillov and Cherednik. We relate the large $N$
  limit of the $S$ matrix to the Hilbert schemes of points on the affine
  plane. As an application, we find an explicit formula for the Euler
  characteristics of the universal sheaf, applied arbitrary Schur
  functor.
\end{abstract}
\dedicatory{Dedicated to Professor Igor Frenkel on the occasion of his
  sixtieth birthday}
\maketitle

\section*{Introduction}

Witten realized polynomial invariants of knots and links, such as the
Jones polynomial by the Chern-Simons theory on $S^3$ \cite{Witten}.
This construction also gave invariants for three-manifolds and links
in them.
Recently Aganagic and Shakirov propose a refinement of the $\SU(N)$
Chern-Simons theory \cite{AS,AS2} for links in three manifolds with
$S^1$-symmetry, such as torus knots in $S^3$. 
As their refinement is based on a deformation of the $S$ and $T$
matrices, let us recall them briefly.
The $S$ and $T$ matrices give the action of $\SL(2,\Z)$ on the quantum
Hilbert space of the theory on $T^2$. In the original Chern-Simons
theory, the Hilbert space is naturally identified with the space of
level $k$ integrable highest weight representations of the affine Lie
algebra corresponding to $\SU(N)$, and $S$ and $T$ come from modular
transformations of characters.
For example, the invariant of the $(n,m)$ torus knot, colored with a
representation corresponding to a partition $\lambda$, is given by
\begin{equation*}
  \langle \emptyset | K (N_{\lambda\bullet}^\bullet) K^{-1} S | \emptyset\rangle.
\end{equation*}
Here $K$ is a matrix corresponding to an element in $\SL(2,\Z)$
mapping $(0,1)$ to $(n,m)$, which is a product in $S$, $T$. And
$N_{\lambda\bullet}^\bullet$ is the matrix given by the fusion product
of the representation for $\lambda$. The empty partition $\emptyset$
corresponds to the vacuum representation at level $k$.
The entry $S_{\lambda\mu}$ of the $S$ matrix is the invariant for the
Hopf link colored with $\lambda$ and $\mu$.

In the refined theory, the $S$, $T$ and $N_{\lambda\bullet}^\bullet$
are replaced by matrices found by Kirillov \cite{Kir} and Cherednik
\cite{Che} in connection with Macdonald polynomials. The Hilbert space
has the same size, but is identified more naturally with the space of
Macdonald polynomials of type $SU(N)$ with parameters $q$, $t$
satisfying the relation $q^k t^N = 1$.

Let us note that Cherednik gives a double affine Hecke algebra
approach to the refined Chern-Simons theory \cite{CherJ}. His approach
is connected to Hilbert schemes of points by Gorsky-Negut
\cite{2013arXiv1304.3328G} after this article was posted to the arXiv.

The Chern-Simons theory, both original and refined, depends on two
positive integers rank $N$ and level $k$. But it is expected to give a
Laurent polynomial in $\sqrt{q}$, $\sqrt{t}$ and $t^{N/2}$ for a link
in $S^3$. This three variable polynomial is specialized to the colored
HOMFLY polynomial at $q=t$.
When we restrict ourselves to consider only torus knots colored with
the vector representation, the three variable polynomial is
conjectured to be the same as the Poincar\'e polynomial of the link
homology group defined by Khovanov and Rozansky \cite{KR1,KR2} (see
also \cite{Kh}).

In this paper we relate the large $N$ limit of the $S$ matrix to the
Hilbert schemes of points on the affine plane $\C^2$. More precisely,
we identify it with an equivariant Euler characteristics of tensor
products of two copies of universal sheaves over Hilbert schemes,
after Schur functors are applied. (See \thmref{thm:main}.)

This relation is natural from various points of view.
At first by the work of Haiman \cite{Haiman} the Macdonald polynomials
correspond to elements in the equivariant $K$-group of the Hilbert
schemes with respect to the $\C^*\times \C^*$-action given by
$\C^*\times\C^*$ fixed points. The integrality and positivity, which
one should expect from the conjectural relation to link homology
groups, are explained best in this context.

Secondly there is a conjecture on the homology group of the link of a
plane curve singularity in terms of the virtual Poincar\'e polynomials
of the Hilbert schemes of points on the singularity by Oblomkov,
Rasmussen and Shende \cite{ORS} (see also \cite{GORS}).
For torus knots this theory is related to the Hilbert schemes of
points on $\C^2$ via the Cherednik algebra
\cite[Conjectures~7,8]{ORS}.

Thirdly the equivariant $K$-group of the Hilbert scheme is naturally
identified with equivariant $K$-group of $\C^{2n}$ with respect to the
$S_n\times\C^*\times\C^*$-action. This is purely combinatorial
setting, and resembles to the setting in \cite{Kh}.

Finally our Hilbert scheme Euler characteristic is the $K$-theoretic
$U(1)$-Nekrasov's partition function for the $4$-dimensional gauge
theory on $\R^4$. It is related to the Chern-Simons theory on $S^3$ via
physical equivalences between the following four theories:
\begin{enumerate}
\item Nekrasov's partition function for the $U(1)$-gauge theory on $\R^4$,
\item the topological string theory on the resolved conifold, the
  total space of $\shfO(-1)\oplus \shfO(-1)\to \proj^1$,
\item the topological string theory on $T^*S^3$,
\item the Chern-Simons theory on $S^3$.
\end{enumerate}
The relation between (1) and (2) is given by the so-called geometric
engineering \cite{Gengineer}.
The topological string theory for the resolved conifold (2) and one for
$T^*S^3$ (3) is expected to be equivalent via the large $N$ duality
\cite{OV}, based on \cite{GV}.
This connection is based on the fact that the resolved conifold is the
resolution of singularities of the conifold $xy = zw$ in $\C^4$, while
$T^*S^3$ is its deformation.
Finally the topological string theory on $T^*S^3$ is equivalent to the
Chern-Simons theory \cite{WittenCS}. The equivalence is given by
comparing perturbative expansions of both theories.

Those equivalences were originally found for unrefined theories. Since
the Chern-Simons theory has a refinement by Khovanov-Rozansky, it
leads to a refinement of the topological string theory on $T^*S^3$ and
the resolved conifold \cite{GSV}. The computation in \cite{AS,AS2} is
based on this refined topological string theory, though the role of
the $S^1$-symmetry is clarified at the first time there.

On the other hand, Nekrasov's partition function has two parameters
$q$, $t$ corresponding to the $\C^*\times\C^*$-action on $\R^4 =
\C^2$. This observation leads to introduce a refinement of the theory
of topological vertex, which is a powerful tool to compute the
topological string theory on toric Calabi-Yau 3-folds, such as the
resolved conifold \cite{AK,IKV}. The computation via the refined
topological vertex nicely matches with the refined Chern-Simons theory
\cite{GIKV}. The $S$-matrix in \cite{AS} coincides with one found in
this context, up to the change of the base on the quantum Hilbert
space \cite{IK}.

The refined topological string theory is supposed to be compute
motivic Donaldson-Thomas invariants, which appear naturally in
\cite{KontS}, for the resolved conifold.
It is not clear, at least to the author, what the refined theory
computes for $T^*S^3$ in mathematics. Even if it becomes clear, all
equivalences in (1),(2),(3),(4) are difficult to justify
mathematically in full generality. In some special cases, they can be
checked by computing invariants separately and compare the formulas.
For example, the geometric engineering between (1) and (2) is checked
for the case corresponding to the invariant for $S^3$ without knots
\cite{MMNS} (see also \cite{Sz}).

Our main result express the large $N$ limit of the
$S$ matrix in terms of certain Euler characteristics of tensor
products of two copies of universal sheaves over Hilbert schemes.
The derivation is more or less straightforward from the explicit form
in \cite[(5.12)]{AS} and the relation between Macdonald polynomials
and Hilbert schemes \cite{Haiman} except that we use a version of
Cherednik-Macdonald-Mehta identity by Garsia, Haiman and Tesler
\cite{GHT}.
We remark that we still identify the quantum Hilbert space with the
equivariant $K$ group, but change the fixed point base to another base
given basically by Procesi bundles.

As an application, we find an explicit formula for the Euler
characteristics of a single universal sheaf, applied arbitrary Schur
functor (\corref{cor:empty}). This includes a formula of Scala for the
case of exterior powers, but general cases seem to be new to the best
of the author's knowledge.

Much more still remain to be done. One of the most important questions
is how to understand products of $S$, $T$ matrices in this
framework.
The multiplication is given by the convolution product in the
$K$-theory, but we cannot literally multiply the large $N$ limit of
$S$ and $T$, as they have infinite sizes.
We only see that the refined Chern-Simons theory gives a
function in $q$, $t$ for a fixed $N$ and varying $k$. But we do not
see what happens when we change also $N$. See \remsref{rem:sum}. 
After this problem will be solved, we next compute products, for example,
to check \cite[Conjectures~7,8]{ORS}, where the answer is given by a
single cohomology group, not products of several. The latter question
is probably harder than the former one.

The paper is organized as follows. The first section is preliminaries
for modified Macdonald polynomials and Hilbert schemes. 
Modified Macdonald polynomials are obtained from integral forms of
original Macdonald polynomials by applying the plethystic substitution.
These are naturally connected with Hilbert schemes in Haiman's theory.
In Section 2 we study specialization of Macdonald polynomials at $q^k
t^N = 1$.
In Section 3 we review the $S$ and $T$ matrices for the refined theory
and rewrite them in terms of modified Macdonald polynomials.
In Section 4 we use preliminary results in Section 1 to the formula in
Section 3 to get a geometric interpretation of the large $N$ limit of
the $S$-matrix.
In two appendices we collect notation on partitions and symmetric
polynomials.

\subsection*{Acknowledgments}

A preliminary version of this paper was written while the author was
enjoying the hospitality of the Institute for Advanced Study in 2007.
At that time the papers \cite{AS,AS2} did not appear yet, and the
motivation was to understand \cite{GIKV}.
The author thanks Sergei Gukov for discussion since then. He also
thanks Mina Aganagic for answering his questions on \cite{AS}, and
Shintaro Yanagida for his careful reading.

\section{Preliminaries}

\subsection{Plethystic substitution}\label{subsec:pleth}

In the main body of the paper we will need a $N^{\mathrm{th}}$ root of
${q}$ and the square root of ${t}$, but we write the function field
$\Q(q,t)$ instead of $\Q(q^{1/N},\sqrt{t})$ for brevity.

Let $R = \Q(q,t)[[x_1,x_2,\dots]]$ be the formal power series ring
in variables $x_1$, $x_2$, \dots with coefficients in
$\Q(q,t)$.

We define a $\lambda$-ring structure on $R$ by
\begin{equation*}
  \psi_k(q) = q^k, \quad \psi_k(t) = t^k, \quad \psi_k(x_i) = x_i^k,
\end{equation*}
where $\psi_k$ is the $k^{\mathrm{th}}$ Adams operation, which is a
ring homomorphism of $R$.

Let $\Lambda$ be the ring of symmetric functions.
We have a {\it plethysm} $\circ\colon \Lambda\times R\to R$ such
that $(-\circ A)\colon \Lambda\to R$ is a ring homomorphism for any
$A\in R$ and $(p_k\circ -)\colon R\to R$ is the Adams operation
$\psi_k$.
Given $A\in R$ and a symmetric
polynomial $f$, we denote $f\circ A$ by $f[A]$.
It is called {\it plethystic substitution\/} of $A$ into $f$.
In practice, the plethysm is determined by $p_k\circ$ as it is a ring
homomorphism in the first variable.
See \cite[Chapter 1]{Haglund}.

For example, if $X = x_1 + x_2 + \dots$, then $f[X] =
f(x_1,x_2,\dots)$, where $f$ in the left hand side is considered as an
element of $\Lambda$, and the right hand is considered as an element
of $R$.
We often denote the right hand side also by $f$, without specifying
either $f$ is in $\Lambda$, $R$ or $\Q(q,t)\otimes \Lambda$.
We use this capital letter $X$ throughout the paper. If
$y_1,y_2,\dots$ are another set of variables, we use $Y$ for
$y_1+y_2+\cdots$.
Various operations are compactly expressed by plethystic
substitutions. For example, if $f$ has degree $n$, then $f[q X] =
q^{n} f$.

More generally the plethysm is defined for any $\lambda$-ring $R$ in
the same way. Later we will use the plethysm when $R$ is the
Grothendieck ring $K(Y)$ of vector bundles over a variety $Y$. The
$\lambda$-ring structure is given by tensor products. For example, the
operation corresponding to the $k^{\mathrm{th}}$ elementary symmetric
function $e_k$ is the exterior product $\Wedge^k$.

Let us return back to our $R = \Q(q,t)[[x_1,x_2,\dots]]$ and compute
$f[-X]$. We have
\begin{equation*}
   p_k[-X] = \psi_k(-X) = - \psi_k(X) = - p_k.
\end{equation*}
\begin{NB}
  As $\psi_k$ is a ring homomorphism, $0 = \psi_k(-X) + \psi_k(X)$.
\end{NB}%
This is different from $p_k(-X) = (-1)^k p_k(X)$. Therefore if we want
to replace variables by their negatives in plethystic brackets, we
use the $\epsilon$-symbol, i.e.,
\begin{equation*}
  p_k[\epsilon X] = (-1)^k p_k.
\end{equation*}

Let $\omega$ be the ring involution of $\Lambda$ sending $e_n$ to
$h_n$, or equivalently $p_k$ to $(-1)^{k-1} p_k$.
Then $p_k[-X] = -p_k = (-1)^k \omega p_k = \omega p_k[\epsilon X]$.
Therefore $(\omega f)[X] = f[-\epsilon X]$ for a symmetric polynomial
$f$.

We also use $f[X/(1-q)]$ later. We have
\begin{equation*}
   p_k[\frac{X}{1-q}] = \frac{x_1^k + x_2^k + \dots}{1 - q^k}
   = \frac{p_k}{1 - q^k}.
\end{equation*}
It has inverse $g\mapsto g[X(1-q)]$. In fact,
$p_k[X(1-q)] = (x_1^k + x_2^k + \dots)(1-q^k)
= p_k(1-q^k)$.

Let
\begin{equation*}
  \Omega(x) = \prod_i \frac1{1-x_i}
  = \exp\left(\sum_{k=1}^\infty \frac{p_k(x)}k\right)
  = \sum_{k=0}^\infty h_k(x).
\end{equation*}
Then
\begin{equation*}
  \Omega[XY \frac{1-t}{1-q}] = \prod_{k=1}^\infty
  \exp\left(\frac1k \frac{1-t^k}{1-q^k} p_k(x) p_k(y)
    \right).
\end{equation*}
This is the kernel of the inner product $\langle\ ,\ \rangle_{q,t}$ in
\cite[VI]{Mac} given by $\langle f,g \rangle_{q,t} = \langle f,
g[\frac{1-q}{1-t}X]\rangle$.

We also consider
\begin{equation*}
  \tilde\Omega(x) = \omega\Omega(x) =
  \exp\left(\sum_{k=1}^\infty \frac{(-1)^{k-1}p_k(x)}{k}
  \right) 
  = \sum_{k=0}^\infty e_k(x)
  = \prod_i (1 + x_i).
\end{equation*}

\begin{NB}
Let us record a formula
\begin{equation*}
  \Omega[X+Y] = \Omega[X]\Omega[Y].
\end{equation*}
This implies that
\begin{equation*}
  h_n[X+Y] = \sum_{k+l=n} h_k(x) h_l(y).
\end{equation*}
\end{NB}

The following expression appears quite often:
\begin{equation}
  \label{eq:OmegaB}
  \frac1{\Omega[u B_\lambda(q,t)]}
  = \prod_{s\in\lambda} ( 1 - u q^{a'(s)} t^{l'(s)}).
\end{equation}
See \secref{sec:part} for the definition of $B_\lambda(q,t)$ and other
notations related to partitions.

\begin{NB}
  A little more nontrivial computation:
  \begin{equation*}
    \omega f[\frac{X}{1-t}]
    = f[-\frac{\epsilon X}{1-t}]
    = f[\frac{\epsilon t^{-1} X}{1- t^{-1}}]
    = (-t)^d f[\frac{X}{1- t^{-1}}]
  \end{equation*}
if $f$ is of degree $d$.
\end{NB}

\subsection{Modified Macdonald polynomials}


Let $P_\lambda(x;q,t)$ be the Macdonald polynomial for a partition
$\lambda$ as in \cite[VI]{Mac}. Its integral form is defined as
\begin{equation*}
  J_\lambda(x;q,t) \defeq c_\lambda(q,t) P_\lambda(x;q,t),
\end{equation*}
where $c_\lambda(q,t)$ is as in \secref{sec:part}.

Following works of Garsia-Haiman and their collaborators, we introduce
the modified Macdonald polynomial by
\begin{equation*}
   \tilde H_\mu(x;q,t) \defeq t^{n(\mu)} J_\mu\left[
     \frac{X}{1-t^{-1}};q,1/t
   \right].
\end{equation*}
\begin{NB}
  We have
  \begin{equation*}
    \tilde H_\mu[(1-t^{-1})X;q,t] = t^{n(\mu)} J_\mu(x;q,t^{-1}).
  \end{equation*}
We replace $t$ by $t^{-1}$ to get
\begin{equation*}
    \tilde H_\mu[(1-t)X;q,t^{-1}] = t^{-n(\mu)} J_\mu(x;q,t).
\end{equation*}
Therefore
\begin{equation}\label{eq:inverse}
   P_\mu(x;q,t) = \frac{t^{n(\mu)}}{c_\mu(q,t)}
   \tilde H_\mu[(1-t)X; q,t^{-1}].
\end{equation}
\end{NB}%
This subsection is devoted for the review of various properties of
$\tilde H_\mu(x;q,t)$ together with an introduction of a fundamental
operator $\nabla$. The reference \cite{Haiman2} is a nice survey
article on this topic and other things, which will be recalled later
in this section.

The Kostka-Macdonald coefficient $\tilde K_{\lambda\mu}(q,t)$ is
defined as
\begin{equation}
  \label{eq:K}
  \tilde H_\mu(x;q,t) = \sum_\lambda \tilde K_{\lambda\mu}(q,t) s_\lambda.
\end{equation}
It is related to one in \cite[VI (8.11)]{Mac} by $\tilde
K_{\lambda\mu}(q,t) = t^{n(\mu)} K_{\lambda\mu}(q,t^{-1})$, as
$S_\lambda(x;t) = s_\lambda[(1-t)X]$. Macdonald positivity conjecture
says $\tilde K_{\lambda\mu}(q,t)\in \Z_{\ge 0}[q,t]$, which was proved
by Haiman \cite{Haiman}. See the next subsection for a little more
detail.

The modified Macdonald polynomials are characterized by the properties
\begin{enumerate}
\item $\tilde H_\mu(x;q,t)\in \Q(q,t)\{ s_\lambda[X/(1-q)] \mid
  \lambda\ge \mu\}$,
\item $\tilde H_\mu(x;q,t)\in \Q(q,t)\{ s_\lambda[X/(1-t)] \mid
  \lambda\ge \mu^t\}$,
\item $\tilde H_\mu[1;q,t] = 1$.
\end{enumerate}
See \cite[3.5.2]{Haiman2}.

We have $P_\mu(x;q^{-1},t^{-1}) = P_\mu(x;q,t)$
\cite[VI(4.14)(iv)]{Mac}. It implies
\begin{equation}\label{eq:omegaH}
   \tilde H_\mu(x;q,t) 
   = \omega \tilde H_\mu(x;q^{-1},t^{-1}) q^{n(\mu^t)} t^{n(\mu)}.
\end{equation}
(See \cite[3.5.12]{Haiman2}.) Let us give a proof for the sake of the
reader. We start with
\begin{equation*}
  P_\mu[\frac{X}{1-t^{-1}};q,t^{-1}]
  = P_\mu[\frac{X}{1-t^{-1}};q^{-1},t]
  = P_\mu[\frac{-tX}{1-t};q^{-1},t].
\end{equation*}
Therefore
\begin{equation*}
   \tilde H_\mu(x;q,t) = t^{2n(\mu)} \frac{c_\mu(q,t^{-1})}{c_\mu(q^{-1},t)} 
   \tilde H_\mu[-tX;q^{-1},t^{-1}]
   = t^{2n(\mu)} \frac{c_\mu(q,t^{-1})}{c_\mu(q^{-1},t)} 
   \omega \tilde H_\mu[\epsilon tX;q^{-1},t^{-1}].
\end{equation*}
Now the assertion follows from
\begin{equation}\label{eq:temp}
    \frac{c_\mu(q^{-1},t)}{c_\mu(q,t^{-1})}
    = \prod_{s\in \mu} \frac{1-q^{-a(s)} t^{l(s)+1}}{1 - q^{a(s)} t^{-(l(s)+1)}}
    = q^{-n(\mu^t)} t^{n(\mu)} (-t)^{|\mu|},
\end{equation}
\begin{NB}
For the record
  \begin{equation*}
    \frac{c'_\lambda(q,t^{-1})}{c'_\lambda(q^{-1},t)}
    = \prod_{s\in\lambda} \frac{1-q^{a(s)+1} t^{-l(s)}}{1 - q^{-(a(s)+1)} t^{l(s)}}
    = t^{-n(\lambda)} q^{n(\lambda^t)} (-q)^{|\lambda|}
  \end{equation*}
and
\begin{equation}\label{eq:q-1}
  \frac{c_\lambda(q^{-1},t) c'_\lambda(q,t^{-1})}
  {c_\lambda(q,t^{-1})c'_\lambda(q^{-1},t)}
  = (qt)^{|\lambda|}.
\end{equation}
\end{NB}%
and $\tilde H_\mu[\epsilon tX;q^{-1},t^{-1}] = (-t)^{|\mu|} \tilde
H_\mu(x;q^{-1},t^{-1})$.
\begin{NB}
We give another proof for a record:
\begin{equation*}
  \begin{split}
  & \omega \tilde H_\mu(x;q^{-1},t^{-1})
  = t^{-n(\mu)} J_\mu\left[-\epsilon\frac{X}{1-t};q^{-1},t \right]
  = t^{-n(\mu)} c_\mu(q^{-1},t)
  P_\mu\left[-\epsilon \frac{X}{1-t};q^{-1},t \right]
\\
  =\; & t^{-n(\mu)} c_\mu(q^{-1},t)
  P_\mu\left[-\epsilon \frac{X}{1-t};q,t^{-1} \right]
  = t^{-n(\mu)} c_\mu(q^{-1},t)
  P_\mu\left[\epsilon t^{-1}\frac{X}{1-t^{-1}};q,t^{-1} \right]
\\
  =\; & (-t)^{-|\mu|} t^{-2n(\mu)}
  \frac{c_\mu(q^{-1},t)}{c_\mu(q,t^{-1})}
    H_\mu(x;q,t).
  \end{split}
\end{equation*}
Then we use \eqref{eq:temp}.
\end{NB}%
Other formulas appearing below can be also derived from the
corresponding formulas for the original Macdonald polynomials. We
leave the proofs as exercises for the reader hereafter.

Let $\omega_{q,t}$ be the algebra endomorphism of
$\Q(q,t)\otimes \Lambda$ defined by
\begin{equation*}
  \omega_{q,t} p_k = (-1)^{k-1} \frac{1-q^k}{1-t^k} p_k.
\end{equation*}
We have
\begin{equation*}
  \omega_{q,t} f = \omega f[\frac{1-q}{1-t} X]
  = f[-\epsilon \frac{1-q}{1-t} X].
\end{equation*}
It is known \cite[VI, (5.1)]{Mac} that
\begin{equation*}
  \omega_{q,t} P_\lambda(x;q,t) 
  = \frac{c_{\lambda^t}(t,q)}{c_{\lambda^t}'(t,q)} P_{\lambda^t}(x;t,q)
  = \frac{c_{\lambda}'(q,t)}{c_{\lambda}(q,t)} P_{\lambda^t}(x;t,q).
\end{equation*}
This is rewritten as
\begin{equation}\label{eq:transpose}
  \tilde H_{\mu^t}(x;q,t) = \tilde H_\mu(x;t,q).
\end{equation}
(See \cite[3.5.11]{Haiman2}.)
\begin{NB}
  Let us give a proof. Using \eqref{eq:inverse}, we get
  \begin{equation*}
    \omega_{q,t} P_\mu(x;q,t) 
    = \frac{t^{n(\mu)}}{c_{\mu}(q,t)} \omega \tilde H_\mu[(1-q)X;q,t^{-1}]
    = \frac{q^{n(\mu^t)}}{c_{\mu}(q,t)}\tilde H_\mu[(1-q)X;q^{-1},t],
  \end{equation*}
where we have used \eqref{eq:omegaH}.
On the other hand we have
\begin{equation*}
  \frac{c_{\mu^t}(t,q)}{c_{\mu^t}'(t,q)} P_{\mu^t}(x;t,q) 
  = \frac{q^{n(\mu^t)}}{c_{\mu^t}'(t,q)}
  \tilde H_{\mu^t}[(1-q)X;t,q^{-1}]
  =
  \frac{q^{n(\mu^t)}}{c_\mu(q,t)}
  \tilde H_{\mu^t}[(1-q)X;t,q^{-1}].
\end{equation*}
Replacing $(1-q)X$ by $X$, and then $q^{-1}$ by $q$, we get the
assertion.
\end{NB}

The Cauchy formula for the usual Macdonald polynomials \cite[VI(4.13)
and (6.19)]{Mac} is
\(
    \Omega[XY \frac{1-t}{1-q}]
    = \sum_{\mu} P_\mu(X;q,t) Q_\mu(Y;q,t).
\)
It is equivalent to
\begin{equation}\label{eq:Cauchy}
  \tilde\Omega\left[
    \frac{XY}{(1-q)(1-t)}
  \right]
  = \sum_{\mu}
  \frac{q^{-n(\mu^t)} t^{-n(\mu)} \tilde H_\mu(X;q,t) \tilde H_\mu(Y;q,t)}
  {c_\mu(q^{-1},t) c_\mu'(q,t^{-1})}.
\end{equation}
\begin{NB}
Note
\begin{equation*}
  c_\mu(q^{-1},t) c_\mu'(q,t^{-1}) =
  \prod_{s\in \mu} 
    \left(1 - t^{-l_{\mu}(s)} q^{a_{\mu}(s)+1}\right)
      \left(1 - t^{l_{\mu}(s)+1}q^{-a_{\mu}(s)}\right).
  \end{equation*}
\end{NB}%
\begin{NB}
Let us give a proof.
Recall \cite[VI(4.13) and (6.19)]{Mac}:
  \begin{equation*}
    \Omega[XY \frac{1-t}{1-q}]
    = \sum_{\mu} P_\mu(X;q,t) Q_\mu(Y;q,t)
    = \sum_\mu \frac{J_\mu(X;q,t) J_\mu(Y;q,t)}
    {c_\mu(q,t) c_\mu'(q,t)}.
  \end{equation*}
  We replace $t$ by $t^{-1}$ and then $X$ by $X/(1-t^{-1})$, $Y$ by
  $Y/(1-t^{-1})$ to get
  \begin{equation*}
    \begin{split}
    & \Omega[XY \frac1{(1-t^{-1})(1-q)}]
    = \sum_\mu \frac{J_\mu[\frac{X}{1-t^{-1}};q,t^{-1}]
        J_\mu[\frac{Y}{1-t^{-1}};q,t^{-1}]}
    {c_\mu(q,t^{-1}) c_\mu'(q,t^{-1})}
\\
   =\; &
   \sum_\mu \frac{H_\mu(x;q,t) H_\mu(Y;q,t) q^{-n(\mu^t)} t^{-n(\mu)}
     (-t)^{|\mu|}}
    {c_\mu(q^{-1},t) c_\mu'(q,t^{-1})},
    \end{split}
  \end{equation*}
  where we have used \eqref{eq:temp}. On the other hand, the left hand
  side is
  \begin{equation*}
    \Omega[\frac{-tXY}{(1-t)(1-q)}]
    = \omega\Omega(\frac{\epsilon tXY}{(1-t)(1-q)}].
  \end{equation*}
Replacing $X$ by $\epsilon t^{-1}X$, we get the assertion.
\end{NB}%
We apply $\omega$ to get
\begin{equation}\label{eq:Cauchy2}
  \Omega\left[
    \frac{XY}{(1-q)(1-t)}
  \right]
  = \sum_{\mu}
  \frac{q^{-n(\mu^t)} t^{-n(\mu)} \omega \tilde H_\mu(X;q,t) \tilde H_\mu(Y;q,t)}
  {c_\mu(q^{-1},t) c_\mu'(q,t^{-1})}.
\end{equation}
\begin{NB}
Note
\begin{equation*}
  (\omega\Omega)[XY]
  = \exp\left(
    \sum_{k\ge 1}(-1)^{k-1} \frac{p_k(X) p_k(Y)}k
    \right)
  = \omega^X \Omega[XY] = \omega^Y\Omega[XY],
\end{equation*}
where the superscripts $X$, $Y$ denote the operations are applied to
$X$-variables or $Y$-variables. To get the above formula, we have used
$\omega^X$.

This can be checked directly as follows. The above says
\begin{equation*}
  \Omega\left[
    \frac{XY}{(1-q)(1-t)}
  \right]
  = \sum_{\mu}
  \frac{\tilde H_\mu(X;q^{-1},t^{-1}) \tilde H_\mu(Y;q,t)}
  {c_\mu(q^{-1},t) c_\mu'(q,t^{-1})}
\end{equation*}
by \eqref{eq:omegaH}. We replace $q$, $t$ by $q^{-1}$, $t^{-1}$
respectively.
Using \eqref{eq:q-1} we have
\begin{equation*}
  \Omega\left[
    \frac{qt XY}{(1-q)(1-t)}
  \right]
  =
  \Omega\left[
    \frac{XY}{(1-q^{-1})(1-t^{-1})}
  \right]
  = \sum_{\mu}
  \frac{(qt)^{|\mu|}\tilde H_\mu(X;q,t) \tilde H_\mu(Y;q^{-1},t^{-1})}
  {c_\mu(q^{-1},t) c_\mu'(q,t^{-1})}.
\end{equation*}
This is equivalent to
\begin{equation*}
  \Omega\left[
    \frac{XY}{(1-q)(1-t)}
  \right]
  = \sum_{\mu}
  \frac{\tilde H_\mu(X;q,t) \tilde H_\mu(Y;q^{-1},t^{-1})}
  {c_\mu(q^{-1},t) c_\mu'(q,t^{-1})}
\end{equation*}
\end{NB}%

We define another inner product by
\begin{equation}
  \langle f,g\rangle_* = \langle f, \omega g[(1-q)(1-t) X]\rangle
  = (-t)^{\deg g} \langle f, g[(1-q)(1-t^{-1}) X]\rangle,
\end{equation}
where $\deg g$ denotes the degree of $g$.
\begin{NB}
  \begin{equation*}
    \langle p_\lambda, p_\mu\rangle_*
    = \delta_{\lambda\mu} (-1)^{|\lambda|-l(\lambda)} z_\lambda 
    p_\lambda[(1 - q)(1-t)].
  \end{equation*}
Note also that
\begin{equation*}
  \begin{split}
  (\omega p_\lambda)[(1-q)(1-t) X] 
  &= (-1)^{|\lambda|- l(\lambda)} p_\lambda[(1-q)(1-t)X]
  = (-1)^{|\lambda|- l(\lambda)} \prod_i (1-q^{\lambda_i})(1-t^{\lambda_i})
  p_\lambda
\\
  &= \omega (p_\lambda[(1-q)(1-t)X]).
  \end{split}
\end{equation*}
Therefore we do not need to care the order of the plethystic
substitution and $\omega$.

We also note
\begin{equation*}
  \left\langle s_\lambda\left[\frac{X}{1-q}\right],
  s_\mu\left[\frac{X}{1-t^{-1}}\right]\right\rangle_*
  = (-t)^{|\lambda|}\delta_{\lambda\mu}.
\end{equation*}
\end{NB}%
We have a relation
\begin{equation}\label{eq:rel}
    \langle f, g\rangle_{q,t}
  = \langle f[\frac{X}{1-t}], \omega g[\frac{X}{1-t}]\rangle_*
  = (-t)^{-\deg g} \langle f[\frac{X}{1-t}], g[\frac{X}{1-t^{-1}}]\rangle_*.
\end{equation}
\begin{NB}
  \begin{equation*}
      \mathrm{LHS} = \langle f, g[\frac{1-q}{1-t}X]\rangle
      = \langle f[\frac{X}{1-t}], g[(1-q)X]\rangle
      = (-t)^{-d} \langle f[\frac{X}{1-t}],
      g[\frac{X}{1-t^{-1}}]\rangle_*
      = \mathrm{RHS}.
  \end{equation*}
\end{NB}%
The Cauchy formula \eqref{eq:Cauchy} is equivalent to
\begin{equation}\label{eq:Hinner}
  \langle \tilde H_\lambda, \tilde H_\mu\rangle_*
  = \delta_{\lambda\mu} q^{n(\mu^t)} t^{n(\mu)} c_\mu(q^{-1},t) c'_\mu(q,t^{-1}).
\end{equation}
See \cite[(73)]{Haiman2}. This can be directly derived from the
formula for inner products of Macdonald polynomials (\cite[VI(6.19)]{Mac}):
\(
  \langle P_\lambda,P_\mu\rangle_{q,t} = \delta_{\lambda\mu}
  \nicefrac{c_\lambda'(q,t)}{c_\lambda(q,t)},
\)
and the property
\(
  \langle f,g\rangle_{q,t^{-1}} = \langle f[-tX], g\rangle_{q,t}.
\)

\begin{NB}
Let us compute this more directly.
  \begin{equation*}
    \langle \tilde H_\lambda, \tilde H_\mu\rangle_*
    = \langle \tilde H_\lambda[(1-t){X};q,t],
    \omega\tilde H_\mu[(1-t){X};q,t]\rangle_{q,t}
    = q^{n(\mu^t)} t^{n(\mu)}
    \langle \tilde H_{\lambda}[{(1-t)}X;q,t],
    \tilde H_{\mu}[{(1-t)}X;q^{-1},t^{-1}]\rangle_{q,t}.
  \end{equation*}
By \eqref{eq:inverse} we have
\begin{equation*}
  \tilde H_{\mu}[(1-t){X};q^{-1},t^{-1}]
  = t^{-n(\mu)} J_{\mu}(x;q^{-1},t) 
  = t^{-n(\mu)} c_{\mu}(q^{-1},t) P_{\mu}(x;q^{-1},t).
\end{equation*}
On the other hand, we have
\begin{equation*}
  \tilde H_{\lambda}[(1-t){X};q,t]
  = \tilde H_{\lambda}[(1-t^{-1}){(-tX)};q,t]
  = t^{n(\lambda)} c_{\lambda}(q, t^{-1}) P_{\lambda}[-tX; q,t^{-1}].
\end{equation*}
Therefore
\begin{equation*}
  \langle \tilde H_\lambda, \tilde H_\mu\rangle_*
  = q^{n(\mu^t)} t^{n(\lambda)} c_\lambda(q,t^{-1}) c_\mu(q^{-1},t)
  \langle P_\lambda[-tX;q,t^{-1}], P_\mu(x;q,t^{-1})\rangle_{q,t},
\end{equation*}
where we have used  $P_\mu(x;q^{-1},t) = P_\mu(x;q,t^{-1})$.

The Cauchy formula for the Macdonald polynomials is
\begin{equation*}
  \Omega[\frac{1-t}{1-q} XY] = \sum_\mu
  P_\mu(x;q,t) Q_\mu(y;q,t).
\end{equation*}
The left hand side is equal to
\begin{equation*}
  \Omega[\frac{1-t^{-1}}{1-q} (-tX)Y].
\end{equation*}
Therefore replacing $X$ by $-t^{-1}X$, and then $t$ by $t^{-1}$ to get
\begin{equation*}
  \Omega[\frac{1-t}{1-q} XY]
  = \sum_\mu P_\mu[-tX;q,t^{-1}] Q_\mu[Y;q,t^{-1}].
\end{equation*}
Since the left hand side is the same as the original kernel, this means
\begin{equation*}
  \langle P_\lambda[-tX;q,t^{-1}], Q_\mu[X;q,t^{-1}]\rangle_{q,t}
  = \delta_{\lambda\mu},
\end{equation*}
or equivalently
\begin{equation*}
  \langle P_\lambda[-tX;q,t^{-1}], P_\mu[X;q,t^{-1}]\rangle_{q,t}
  = \delta_{\lambda\mu} \frac{c_\mu'(q,t^{-1})}{c_\mu(q,t^{-1})}.
\end{equation*}
Substituting this to back, we find
\begin{equation*}
  \langle \tilde H_\lambda, \tilde H_\mu\rangle_* = \delta_{\lambda\mu} 
  q^{n(\mu^t)} t^{n(\mu)} c_\mu(q^{-1},t) c_\mu(q,t^{-1})
  \frac{c_\mu'(q,t^{-1})}{c_\mu(q,t^{-1})}
  = \delta_{\lambda\mu} 
  q^{n(\mu^t)} t^{n(\mu)} c_\mu(q^{-1},t) c_\mu'(q,t^{-1}).
\end{equation*}

\begin{NB2}
  It is little strange that we need to the Cauchy formula during the
  proof. So let us give an equivalent, but slightly different
  looking explanation. Note
  \begin{equation*}
    \begin{split}
  \langle p_\lambda, p_\mu\rangle_{q,t^{-1}}
  &= \delta_{\lambda\mu} z_\lambda p_\lambda[\frac{1-q}{1-t^{-1}}]
  = \delta_{\lambda\mu} z_\lambda p_\lambda[-\frac{t(1-q)}{1-t}]
  = (-1)^{|\lambda|-l(\lambda)} t^{|\lambda|}
  \delta_{\lambda\mu} z_\lambda p_\lambda[\frac{1-q}{1-t}]
\\
  &= \langle p_\lambda[-tX], p_\mu\rangle_{q,t}.
    \end{split}
  \end{equation*}
Therefore we have $\langle f, g\rangle_{q,t^{-1}}
= \langle f[-tX], g\rangle_{q,t}$. Therefore
\begin{equation*}
  \langle P_\lambda[-tX;q,t^{-1}], Q_\mu(X;q,t^{-1})\rangle_{q,t}
  = \langle P_\lambda(x;q,t^{-1}), Q_\mu(x;q,t^{-1})\rangle_{q,t^{-1}}
  = \delta_{\lambda\mu}.
\end{equation*}
\end{NB2}
\end{NB}%

\begin{NB}
Let us also note that
\begin{equation*}
  \langle J_\lambda, J_\lambda\rangle_{q,t}
  = c_\lambda(q,t)^2 \langle P_\lambda, P_\lambda\rangle_{q,t}
  = c_\lambda(q,t) c_\lambda'(q,t).
\end{equation*}
\end{NB}

We define an operator $\nabla$ (see \cite[3.5.5]{Haiman2}) on
$\Lambda\otimes\Q(q,t)$ by
\begin{equation}\label{eq:nabla}
  \nabla \tilde H_\mu = q^{n(\mu^t)} t^{n(\mu)} \tilde H_\mu.
\end{equation}

This operator can be generalized as follows. Let $f$ be a symmetric
polynomial. We define $\Delta_f$ by
\begin{equation*}
  \Delta_f \tilde H_\mu = f[B_\mu(q,t)] \tilde H_\mu.
\end{equation*}
We have $\nabla = \Delta_{e_n}$ on the degree $n$ part.

\subsection{Hilbert schemes of points on the plane
and their torus fixed points}\label{subsec:Hilb}

We fix terminology on Hilbert schemes, in particular, recall weights
of tangent and tautological vector bundles at fixed points of a torus
action. 
%
Our convention for the torus action is the same as in \cite{MR2199008}
(see especially \cite[Remark~4.4]{MR2199008} for the action on
functions) instead of \cite{MR2183121,GNY2}, where $t_1$, $t_2$ are
replaced by their inverse. See also \cite{Lecture}.

Set $X = \C^2$ and let $X^{[n]}$ denote the Hilbert scheme of $n$
points in $X$. Set-theoretically it is nothing but the space of ideals
$I$ in $\C[x,y]$ such that $\dim \C[x,y]/I = n$.
We have the Hilbert-Chow morphism $\pi\colon X^{[n]}\to S^n X$, where
$S^n X$ is the $n^{\mathrm{th}}$ symmetric power of $X$.

We have a $\C^*\times\C^*$-action on $X$ and the induced action on
$X^{[n]}$. To make compatible with \cite{Haiman} we make it
\begin{equation*}
  (x,y) \mapsto (t x, q y).
\end{equation*}
So $t_1$, $t_2$ in \cite{MR2199008} are $t$, $q$ here.

We consider the equivariant $K$-group $K_{\C^*\times\C^*}(X^{[n]})$ of
$X^{[n]}$ with respect to the $\C^*\times \C^*$-action.
It is the Grothendieck group of the abelian category of
$\C^*\times\C^*$-equivariant vector bundles over $X^{[n]}$.
As $X^{[n]}$ is smooth, it is naturally isomorphic also to the
Grothendieck group of the category of equivariant sheaves.
It is a module over the representation ring $R(\C^*\times\C^*)\cong
\Z[q^{\pm 1}, t^{\pm 1}]$.

Tensor products $\otimes$ of equivariant vector bundles are
well-defined on $K_{\C^*\times\C^*}(X^{[n]})$. It is denoted also by
$\otimes$.

Later we will use the derived category
$D^b_{\C^*\times\C^*}(\operatorname{Coh}X^{[n]})$ of equivariant
coherent sheaves on $X^{[n]}$. The equivariant $K$-group
$K_{\C^*\times\C^*}(X^{[n]})$ is isomorphic to the free abelian group
generated by the objects of
$D^b_{\C^*\times\C^*}(\operatorname{Coh}X^{[n]})$ modulo relation
given by distinguished triangles.

The $\C^*\times\C^*$-fixed points in $X^{[n]}$ are monomial ideals in
$\C[x,y]$, and hence are parametrized by partitions $\mu$ of $n$.
Let $I_\mu\in X^{[n]}$ be the fixed point corresponding to
$\mu$. Then the $\C^*\times\C^*$-character of the alternating
sum of exterior powers of the cotangent space
$T^*_{I_\mu} X^{[n]}$ is given by
\begin{equation}\label{eq:HilbTang}
   \ch \Wedge_{-1}T_{I_\mu}^* X^{[n]}
   = \prod_{s\in \mu} 
    \left(1 - t^{-l_{\mu}(s)} q^{a_{\mu}(s)+1}\right)
      \left(1 - t^{l_{\mu}(s)+1} q^{-a_{\mu}(s)}\right)
   = c_\mu(q^{-1},t) c_\mu'(q,t^{-1}).
\end{equation}
Note that this expression appears in \eqref{eq:Cauchy2}.

Over the Hilbert scheme $X^{[n]}$ we have the {\it tautological
  bundle\/} $\shfO^{[n]}$ whose fiber at $I$ is given by
\begin{equation*}
    \shfO^{[n]}_I = \C[x,y]/I.
\end{equation*}
The fiber over the fixed point $I_\mu$ is a
$\C^*\times\C^*$-module whose character is
\begin{equation*}
  \ch \shfO^{[n]}_{I_\mu}
  = \sum_{s\in \mu} q^{a'(s)} t^{l'(s)} = B_\mu(q,t).
\end{equation*}
The generating function of the exterior powers of the tautological
bundle at the point $I_\mu$ satisfies
\begin{equation}\label{eq:taut}
  \ch \Wedge_{-u} \shfO^{[n]}_{I_\mu}
  = \prod_{s\in\mu}( 1 - u q^{a'(s)} t^{l'(s)})
  = \frac1{\Omega[u B_\mu(q,t)]}.
\end{equation}

Let us consider its determinant line bundle of $\shfO^{[n]}$:
\begin{equation*}
    \mathcal L = \det \shfO^{[n]} = \Wedge^n \shfO^{[n]}.
\end{equation*}
We have
\begin{equation}\label{eq:CS}
   \ch \mathcal L_{I_\mu}
   = \prod_{s\in\mu} q^{a'(s)} t^{l'(s)} 
   = q^{n(\mu^t)} t^{n(\mu)}.
\end{equation}

Let $\calE$ be the universal (rank $1$ torsion free) sheaf over
$X\times X^{[n]}$. We consider it as a $K$-group class and denote it
by the same letter also. Let $\calE_{(0,I_\mu)}$ be its fiber at
$(0,I_\mu)$. By \cite[(4.1)]{MR2183121} its character is given by
\begin{equation}\label{eq:universal}
  \ch \calE_{(0,I_\mu)} = 1 - (1-q)(1-t) B_\mu(q,t) = A_\mu(q,t).
\end{equation}
Since $\ch \C[x,y] = 1/(1-q)(1-t)$, we have
\begin{equation*}
    \ch \calE_{(0,I_\mu)} = 1 - \frac{\ch \shfO^{[n]}_{I_\mu}}{\ch \C[x,y]}.
\end{equation*}

We later use slant products of $\calE$ with classes on $\C^2$, for
example $\uE$, $\calE/[\C^2]$. They are classes in the equivariant
$K$-group, defined as
\begin{equation*}
  \uE = q_{2*} (\mathcal E\otimes q_1^* [0]), \qquad
  \calE/[\C^2] = q_{2*} (\mathcal E) = \frac1{(1-q)(1-t)} \uE,
\end{equation*}
where $[0]$ is the class in $K_{\C^*\times\C^*}(X)$ of the origin, and
$q_1$, $q_2$ are the projections of $X\times X^{[n]}$ to the first and
second factors.
Note that $\calE/[\C^2]$ is a {\it localized\/} class in
$K_{\C^*\times\C^*}(X)\otimes_{R(\C^*\times\C^*)} \Q(q,t)$, not in
$K_{\C^*\times\C^*}(X)$. It is because $q_{2}$ is not proper, and the
push-forward homomorphism $q_{2*}$ is defined by formally using
Atiyah-Bott-Lefschetz fixed point formula as above. More examples of
pushforward homomorphisms for non proper morphisms appear later.

\subsection{Haiman's works}\label{subsec:Haiman}

Much deeper connection between Hilbert schemes $X^{[n]}$ and
symmetric polynomials are known through Haiman's works
\cite{Haiman,Haiman-vanish}. Let us briefly recall the main result.

Let $X_n$ be the reduced fiber product
\begin{equation}\label{eq:diagram}
  \begin{CD}
    X_n @>{f}>> X^{n}= \C^{2n}
\\
   @V{\rho}VV @VVV
\\
   X^{[n]} @>>> S^n X = \C^{2n}/S_n.
  \end{CD}
\end{equation}
Then one of main results in \cite{Haiman} says that $X^{[n]}$ is
isomorphic to the Hilbert schemes of $S_n$-invariant ideals in
$\C^{2n}$ so that $X_n$ is identified with its universal family.

As an application of this result together with the result by
Bridgeland-King-Reid \cite{BKR}, Haiman proved

\begin{Theorem}[\cite{Haiman-vanish}]
The functor
\begin{equation*}
  \Phi \defeq {\mathbf R}f_*\circ \rho^* \colon
  D^b(\operatorname{Coh}X^{[n]})
  \to D^b_{S_n}(\operatorname{Coh}X^{n}),
\end{equation*}
is an equivalence of categories, where
$D^b(\operatorname{Coh}X^{[n]})$ is the derived category of coherent
sheaves on $X^{[n]}$ and $D^b_{S_n}(\operatorname{Coh}X^{n})$ is the
derived category of $S_n$-equivariant coherent sheaves on $X^n$.
\end{Theorem}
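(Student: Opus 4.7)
The plan is to deduce this derived equivalence from the Bridgeland--King--Reid theorem \cite{BKR}, using the identification of $X^{[n]}$ with $S_n\text{-Hilb}(X^n)$ established in \cite{Haiman}. Recall the BKR criterion: given a finite group $G$ acting on a smooth quasi-projective variety $M$ such that $G\text{-Hilb}(M)$ is smooth and the natural map $G\text{-Hilb}(M) \to M/G$ is a crepant resolution, the Fourier--Mukai transform $\mathbf{R}f_* \circ \rho^*$ associated to the universal family diagram is an equivalence $D^b(G\text{-Hilb}(M)) \xrightarrow{\sim} D^b_G(M)$, provided the ``fiber dimension condition'' holds, namely $\dim(G\text{-Hilb}(M) \times_{M/G} M) \le \dim M$.

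First, I would set $M = X^n = \mathbb{C}^{2n}$ with the diagonal $S_n$-action, and record the previously cited result of \cite{Haiman} identifying $X^{[n]} \cong S_n\text{-Hilb}(X^n)$ with $X_n$ as the corresponding universal family; this makes the diagram \eqref{eq:diagram} precisely the BKR diagram. The smoothness of $X^{[n]}$ is classical (Fogarty), and the crepancy of $X^{[n]} \to S^n X$ is immediate from the triviality of the canonical bundle of $X^{[n]}$ (equivalently, the fact that the symplectic form on $X$ induces one on $X^{[n]}$). So the only nontrivial hypothesis to verify is the fiber dimension estimate.

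The main obstacle, and indeed the whole content of the theorem, is precisely this dimension bound: one must show $\dim X_n \le 2n$. This is the deep geometric input from \cite{Haiman}, where it is proved that the isospectral Hilbert scheme $X_n$ is normal, Cohen--Macaulay, and Gorenstein of dimension exactly $2n$ --- a consequence of the ``polygraph'' theorem and the resolution of the $n!$ conjecture. I would invoke these results as a black box; without them the BKR machine does not turn over. Once the bound $\dim X_n = 2n = \dim X^n$ is in hand, all hypotheses of BKR are met.

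Finally, to conclude, I would simply apply \cite{BKR} verbatim: the functor $\Phi = \mathbf{R}f_* \circ \rho^*$ from $D^b(\operatorname{Coh} X^{[n]})$ to $D^b_{S_n}(\operatorname{Coh} X^n)$ is an equivalence of triangulated categories. One should also note that the entire construction is $\mathbb{C}^* \times \mathbb{C}^*$-equivariant, since the scaling action on $X$ commutes with the $S_n$-action on $X^n$ and lifts to $X^{[n]}$ and $X_n$, so $\Phi$ refines to an equivalence of the corresponding equivariant derived categories --- a fact that will be used in later sections to transport $K$-theoretic computations between the two sides via the Procesi bundle $P = \Phi^{-1}(\shfO_{X^n})$.
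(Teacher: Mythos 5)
Your proposal is correct and follows exactly the route the paper indicates (and that Haiman himself uses): combine the identification $X^{[n]}\cong S_n\text{-Hilb}(X^n)$ with the Bridgeland--King--Reid criterion, with the deep input being Haiman's polygraph/$n!$ results guaranteeing that $X_n$ is Cohen--Macaulay of dimension $2n$, which gives the fiber-dimension bound (BKR in fact only needs $\dim(Y\times_{M/G}M)\le \dim M+1$, so your stronger estimate more than suffices). The paper itself offers no further proof beyond this citation, so there is nothing to add.
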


The latter derived category is identified with the derived category of
bounded complexes of finitely generated $S_n$-equivariant
$\C[x,y]^{\otimes n} = \C[x_1,y_1,\dots, x_n,y_n]$-modules.

The above equivalence holds also for equivariant derived categories
with respect to the $\C^*\times\C^*$-action.

Let $\cP = \rho_*\shfO_{X_n}$. This is a vector bundle of rank $n! =
|S_n|$ endowed with fiberwise $S_n$-action. We call it the {\it
  Procesi bundle}. Then the functor $\Phi$ is identified with
\begin{equation*}
  \Phi = {\mathbf R}\Gamma_{X^{[n]}}(\cP\otimes -).
\end{equation*}

Taking equivariant Grothendieck group, we have a natural isomorphism
\begin{equation*}
  \Phi\colon K_{\C^*\times\C^*}(X^{[n]}) \xrightarrow{\cong}
  K_{S_n\times \C^*\times\C^*}(X^n).
\end{equation*}

We further compose a homomorphism given by the push-forward
homomorphism for $X^n\to\operatorname{pt}$. Since this morphism is not
a proper, it is not well-defined in the level of the ordinary
Grothendieck group. However, it is well-defined if we take the {\it
  localized\/} Grothendieck group $K_{S_n\times
  \C^*\times\C^*}(\operatorname{pt})\otimes_{\Z[q^{\pm 1},t^{\pm 1}]}\Q(q,t)$.
This can be done by using Atiyah-Bott-Lefschetz formula as there is
only finitely many (in fact only one) torus fixed point in $X^n$ as in
the previous subsection. It can be also checked by using a standard
argument on Hilbert series. See \cite[\S3]{Haiman-vanish} for more
detail.
Recall that the Grothendieck group $R(S_n)$ of representations of
$S_n$ is identified with the degree $n$ part of $\Lambda$ via the
Frobenius characteristic map. See \secref{sec:symm}. Therefore we
combine the above Hilbert series map with Frobenius characteristic
map, we get
\begin{equation*}
  K_{S_n\times \C^*\times\C^*}(X^n) \to \Q(q,t)\otimes \Lambda.
\end{equation*}
The composed map $K_{\C^*\times\C^*}(X^{[n]})\to \Q(q,t)\otimes
\Lambda$ is also denoted by $\Phi$. From its construction, we can
identify the image of $\Phi$:
\begin{Corollary}[\protect{\cite[Prop.~5.4.6]{Haiman2}}]\label{cor:K}
  The functor $\Phi$ induces an isomorphism 
  \begin{equation*}
    K_{\C^*\times\C^*}(X^{[n]}) \cong \{ f\in \Q(q,t)\otimes \Lambda^n \mid
    f[(1-q)(1-t)X]\in \Z[q^{\pm 1},t^{\pm 1}]\otimes \Lambda^n \}.
  \end{equation*}
  The subgroup of classes supported on the punctual Hilbert scheme
  $\pi^{-1}(0)$ is isomorphic to $\Z[q^{\pm 1},t^{\pm 1}]\otimes \Lambda^n$.
\end{Corollary}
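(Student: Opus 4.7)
The plan is to factor the stated isomorphism as a composition of three standard maps and then identify the image by a short plethystic computation.

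First, Haiman's derived equivalence just quoted gives an isomorphism $\Phi\colon K_{\C^*\times\C^*}(X^{[n]}) \xrightarrow{\sim} K_{S_n\times\C^*\times\C^*}(X^n)$. Since $X^n=\C^{2n}$ contracts $\C^*\times\C^*$-equivariantly onto the origin, pullback $i_0^*$ (inverse to $p^*$) identifies the right-hand side with $R(S_n\times\C^*\times\C^*)$, and the Frobenius characteristic further identifies it with the integral lattice $\Z[q^{\pm 1},t^{\pm 1}]\otimes \Lambda^n$ sitting inside $\Q(q,t)\otimes \Lambda^n$.

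Second, I would analyze the localized pushforward $p_*\colon K_{S_n\times\C^*\times\C^*}(X^n)\to R(S_n)\otimes\Q(q,t)$. Applying the Atiyah-Bott-Lefschetz formula at the unique torus fixed point and passing to Frobenius characteristic gives
\[
p_*[\shfO_{X^n}] = h_n\!\left[\frac{X}{(1-q)(1-t)}\right].
\]
For a general class which under $i_0^*$ corresponds to $g\in\Z[q^{\pm 1},t^{\pm 1}]\otimes \Lambda^n$, the projection formula together with the identity
\[
h_n[XA]\,\ast_{S_n}\, g(X) \;=\; g[XA],
\]
checked on the power-sum basis via $p_\mu\ast p_\nu=\delta_{\mu\nu}z_\mu p_\mu$, shows that the localized pushforward followed by Frobenius is exactly the plethystic substitution $g\mapsto g[X/((1-q)(1-t))]$.

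Third, since this substitution is inverted by $f\mapsto f[(1-q)(1-t)X]$, the image of the integral lattice is precisely $\{f\in \Q(q,t)\otimes \Lambda^n : f[(1-q)(1-t)X]\in \Z[q^{\pm 1},t^{\pm 1}]\otimes \Lambda^n\}$, which is the first claim. For the second claim, a class on $X^{[n]}$ supported in $\pi^{-1}(0)$ corresponds under $\Phi$ to a complex supported at the origin of $X^n$; for such a class the pushforward requires no localization and already lands in $R(S_n\times\C^*\times\C^*)=\Z[q^{\pm 1},t^{\pm 1}]\otimes \Lambda^n$. The main obstacle is the plethystic-Kronecker identity of the second step: once it is established, the rest is routine bookkeeping with the projection formula and the equivariant contraction of $X^n$ onto a point.
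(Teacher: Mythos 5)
Your plan is correct and follows essentially the same route as the paper, which defines $\Phi$ precisely as the composition of Haiman's derived equivalence, the localized pushforward $X^n\to\operatorname{pt}$ computed by Atiyah--Bott--Lefschetz at the unique fixed point, and the Frobenius characteristic map, and then identifies the image ``from its construction'' with a citation to \cite[Prop.~5.4.6]{Haiman2}. Your Kronecker-product identity $h_n[XA]\ast g = g[XA]$ (with $A=1/(1-q)(1-t)$) is exactly the computation that makes that identification precise, so nothing is missing.
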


Let $I_\mu$ be the torus fixed point as in the previous
subsection. Then $\Phi(I_\mu)$ is the fiber of $\cP$ at $I_\mu$. Then
one of the main result in \cite{Haiman} says that this is equal to
$\tilde H_\mu$.

We decompose $\cP = \bigoplus_\lambda V^\lambda\otimes \cP_\lambda$
where $V^\lambda$ is the irreducible representation of $S_n$
corresponding to the partition $\lambda$. Then the fiber of
$\cP_\lambda$ at $I_\mu$, as a $\C^*\times\C^*$-module, has character
$\tilde K_{\lambda\mu}(q,t)$. This proves the positivity conjecture of
$\tilde K_{\lambda\mu}$, mentioned in the previous subsection.

There is a natural $\Q(q,t)$-valued symmetric bilinear form $(\ ,\ )$
on $K_{\C^*\times\C^*}(X^{[n]})$ induced from the composite of the
functor
\begin{equation*}
  \mathbf R\Gamma_{X^{[n]}}(-\otimes^{\mathbf L}-)
\end{equation*}
and the character $\ch\colon R(\C^*\times\C^*)\to \Z[q^{\pm 1}, t^{\pm 1}]$.
The Koszul resolution gives us the formula $(I_\mu,I_\nu) =
\delta_{\mu\nu} \ch \Wedge_{-1} T^*_{I_\mu}X^{[n]}$. Comparing
\eqref{eq:HilbTang} with \eqref{eq:Hinner}, we find that $\langle\ ,\
\rangle_*$ has an extra factor $q^{n(\mu^t)} t^{n(\mu)}$. This is
given by the operator $\nabla$. Therefore
\begin{equation}\label{eq:geom}
  (-,-) = \langle\nabla^{-1}\Phi(-),\Phi(-)\rangle_*.
\end{equation}
See \cite[Cor.~5.4.7]{Haiman2}.

The operator $\nabla$ itself has a geometric interpretation. We have
an endofunctor on $D^b(\operatorname{Coh}X^{[n]})$ given by $- \otimes
\mathcal L$, where $\mathcal L$ is the determinant of the tautological
bundle as in the previous subsection. The character of its fiber at
$I_\mu$ is given by $q^{n(\mu^t)} t^{n(\mu)}$. This means that
\begin{equation*}
  \Phi(-\otimes\mathcal L) = \nabla \Phi(-).
\end{equation*}
See \cite[Cor.~5.4.9]{Haiman2}.

Similarly the operator $\Delta_f$ corresponds to the operator
$-\otimes f(\shfO^{[n]})$, where $f(\shfO^{[n]})$ is the plethystic
substitution of the $K$-theory class of $\shfO^{[n]}$ to $f$.

We have
\begin{equation*}
  \Phi(\mathcal P_{\lambda^t}\otimes \mathcal L^{*})
  = s_\lambda\left[\frac{X}{(1-q)(1-t)}\right].
\end{equation*}
See \cite[(97)]{Haiman2}.

\begin{Remark}
  There is also a natural inner product on
  $K_{S_n\times\C^*\times\C^*}(X^n)$ defined in the same way as for
  $(\ ,\ )$.
But the equivalence $\Phi$ does {\it not\/} respect inner products.
In fact, the natural inner product on $K_{S_n\times\C^*\times\C^*}(X^n)$
is $\langle\ ,\ \rangle_*$. This follows from
\begin{equation*}
  \left\langle
    s_\lambda\left[\frac{X}{(1-t)(1-q)}\right],
    s_\mu\right\rangle_*
  = 
  \delta_{\lambda^t\mu},
\end{equation*}
and the observation that $s_\lambda[{X}/{(1-t)(1-q)}]$ corresponds to
$V^\lambda\otimes \shfO_{X^n}$ and $s_\mu$ to the tensor product of
$V^\mu$ and the skyscraper sheaf at the origin.
The discrepancy of inner products comes from that $\Phi$ satisfies
\begin{equation*}
  \Hom_{D^b_{S_n}(\operatorname{Coh} X^n)}(\Phi(a), \Phi(b))
  \cong \Hom_{D^b(\operatorname{Coh} X^{[n]})}(a, b),
\end{equation*}
as it is a categorical equivalence. We note $\mathbf
R\Hom_{D^b(\operatorname{Coh}X^{[n]})}(a,b) = \mathbf
R\Gamma_{X^{[n]}}(a^*\otimes b)$, and a similar identity for
$\Hom_{D^b_{S_n}(\operatorname{Coh} X^n)}$, where $a^*$ is the
Grothendieck-Verdier duality.
Then we use $\mathcal P_\lambda^* = \mathcal
P_{\lambda^t}\otimes\mathcal L$ (see \cite[(96)]{Haiman2}) to see
that $(-,-) = \langle \nabla^{-1}(-), -\rangle_*$.
\begin{NB}

A little more detail.
We have
\begin{equation*}
  \delta_{\lambda^t\mu}
  = \Hom\left(s_{\lambda^t}\left[\frac{X}{(1-t)(1-q)}\right],s_\mu\right).
\end{equation*}
This is equal to
\begin{equation*}
  \Hom\left(\Phi^{-1}(s_{\lambda^t}\left[\frac{X}{(1-t)(1-q)}\right]),
    \Phi^{-1}(s_\mu)\right),
\end{equation*}
as we have just explained. As 
\begin{equation*}
  \Phi^{-1}(s_{\lambda^t}\left[\frac{X}{(1-t)(1-q)}\right])
  = \mathcal P_{\lambda}\otimes\mathcal L^*
  = \mathcal P_{\lambda^t}^*,
\end{equation*}
this corresponds to
\begin{equation*}
  \mathbf R\Gamma_{X^{[n]}}\left(\mathcal P_{\lambda^t}\otimes
    \Phi^{-1}(s_\mu)\right)
  =
  \left(\nabla s_\lambda\left[\frac{X}{(1-t)(1-q)}\right],
    s_\mu\right).
\end{equation*}
\end{NB}%
\end{Remark}

\subsection{Classes supported on a lagrangian subvariety}
\label{subsec:lag}

We need a variant of \corref{cor:K} later. Let $L_x^{[n]}$ be the
inverse image of the symmetric product of the $x$-axis $S^n(\{ y =
0\})$ under $\pi$. We also introduce a similar variety $L_y^{[n]}$
exchanging the role of $x$ and $y$. These are known to be lagrangian
with respect the natural symplectic form on $X^{[n]}$ induced from
that on $X$. (See \cite[\S9.3]{Lecture}.)

Then \corref{cor:K} in this setting yields
\begin{equation}\label{eq:KL}
  K_{\C^*\times\C^*}(L_x^{[n]}) \cong
  \{ f\in \Q(q,t)\otimes\Lambda^n \mid
  f[(1-t)X]\in\Z[q^{\pm 1}, t^{\pm 1}]\otimes \Lambda^n \}.
\end{equation}

In particular, we have an integral base of
$K_{\C^*\times\C^*}(L_x^{[n]})$ given by
\begin{equation*}
  \{ \Phi^{-1}(s_\lambda[{X}/{(1-t)}]) \}_\lambda.
\end{equation*}
Classes $\Phi^{-1}(s_\lambda[{X}/{(1-t)}])$ appeared in Haiman's
work. We briefly recall the result. See \cite[\S5.4.2]{Haiman2} for
more detail.

Recall the commutative diagram \eqref{eq:diagram}. Let $V({\mathbf y})
\subset X_n$ be the zero locus $y_1 = y_2 = \cdots = y_n = 0$, where
$y_i$ is the pull-back of the coordinate function on $X^n$ under
$f$. It is a complete intersection in $X_n$ and $\rho$ maps
$V({\mathbf y})$ to $L^{[n]}_x$. Then
\begin{equation*}
  \Phi^{-1}(s_\lambda[{X}/{(1-t)}])
  = \mathcal L^{*}\otimes \Hom_{S_n}(V^{\lambda^t},\rho_*\shfO_{V({\mathbf y})}),
\end{equation*}
where $V^{\lambda^t}$ is the irreducible representation of $S_n$
corresponding to the transpose partition $\lambda^t$ of $\lambda$, and
$\rho_*\shfO_{V({\mathbf y})}$ has an induced $S_n$-action from that
on $X_n$.

\subsection{Geometric interpretation of the Cauchy formula}

Let us give a few applications of the Cauchy formula for the Macdonald
polynomials under the above identifications of various expressions in
terms of Hilbert schemes.
These calculation will be prototypes of our interpretation of refined
Hopf link HOMFLY invariants as Hilbert series on Hilbert schemes.

We take degree $n$ in \eqref{eq:Cauchy2} and substitute $X = Y =
1$. Then we get
\begin{equation}\label{eq:specializedCauchy2}
  h_n[1/(1-q)(1-t)]
  = \sum_{|\mu|=n}
  \frac{1}
  {c_\mu(q^{-1},t) c_\mu'(q,t^{-1})},
\end{equation}
as $\omega \tilde H_\mu[1;q,t] = t^{n(\mu)} q^{n(\mu^t)}$ by
\eqref{eq:omegaH}. 
\begin{NB}
If we substitute $X = Y = 1$ into \eqref{eq:Cauchy} instead, we get
\begin{equation}\label{eq:specializedCauchy}
  e_n[{1}/{(1-q)(1-t)}]
  = \sum_{|\mu|=n}
  \frac{q^{-n(\mu^t)} t^{-n(\mu)}}
  {\prod_{s\in \mu} 
    \left(1 - t^{-l_{\mu}(s)} q^{a_{\mu}(s)+1}\right)
      \left(1 - t^{l_{\mu}(s)+1}q^{-a_{\mu}(s)}\right)}.
\end{equation}
\end{NB}%
By \eqref{eq:HilbTang} the right hand side is $(\ch
\Wedge_{-1} T^*_{I_\mu}X^{[n]})^{-1}$. By the Atiyah-Bott-Lefschetz
fixed point formula, we have
\begin{equation*}
  \sum_{\mu}\frac1{\ch \Wedge_{-1} T^*_{I_\mu}X^{[n]}}
  = \sum_d (-1)^d \ch H^d (X^{[n]}, \shfO_{X^{[n]}}).
\end{equation*}
(See \cite[\S3]{Haiman-vanish} why this is true even though $X^{[n]}$
is noncompact.) 
We safely confuse the derived functor with the corresponding
homomorphism on the $K$-theory, and we denote the right hand side by
\begin{equation*}
  \ch {\mathbf R}\Gamma_{X^{[n]}}(\shfO_{X^{[n]}})
\end{equation*}
hereafter.

On the other hand, the left hand side of \eqref{eq:specializedCauchy2}
is the character of $S^n \C[x,y]$, the $n^{\mathrm{th}}$ symmetric
power of $\C[x,y]$. It is equal to the character of the coordinate
ring of $S^n X$. Therefore \eqref{eq:specializedCauchy2} implies
\begin{equation}\label{eq:S3}
  \ch \Gamma_{S^n X}(\shfO_{S^nX})
  = \ch {\mathbf R}\Gamma_{X^{[n]}}(\shfO_{X^{[n]}}).
\end{equation}
In fact, this equality is a simple consequence of the fact $\mathbf
R\pi_* \shfO_{X^{[n]}} = \shfO_{S^n X}$, which follows as $S^n X$ only
has a rational singularity.

Note also that this geometric argument gives a stronger assertion that
\begin{equation*}
  H^d (X^{[n]}, \shfO_{X^{[n]}})
  =
  \begin{cases}
    S^n \C[x,y] & \text{if $d=0$},
\\
    0 & \text{otherwise}.    
  \end{cases}
\end{equation*}

This \eqref{eq:S3} can be generalized as follows. Let us put $X=1$, $Y=1-u$ in
\eqref{eq:Cauchy2}. The left hand side is
\begin{equation*}
  \begin{split}
  & h_n\left[\frac{1-u}{(1-q)(1-t)}\right]
  = \sum_{k=0}^n (-u)^k h_{n-k}\left[\frac1{(1-q)(1-t)}\right]
  e_k\left[\frac1{(1-q)(1-t)}\right]
\\
  =\; &
  \sum_{k=0}^n (-u)^k \ch (S^{n-k} \C[x,y]\otimes
  \Wedge^k \C[x,y]).
  \end{split}
\end{equation*}
On the other hand, the right hand side is
\begin{equation*}
  \sum_{|\mu|=n} \frac{\tilde H_\mu[1-u;q,t]}{c_\mu(q^{-1},t) c_{\mu}'(q,t^{-1})}.
\end{equation*}
From \cite[VI(6.11')]{Mac} or \cite[(3.5.20)]{Haiman2} we have $\tilde
H_\mu[1-u;q,t] = \Omega[uB_\mu(q,t)]^{-1}$. (See also
\thmref{thm:symmetry} below.)
Therefore this is equal to
\begin{equation*}
  \sum_{|\mu|=n} \frac{\ch \Wedge_{-u}\shfO^{[n]}_{I_\mu}}
  {\ch \Wedge_{-1} T^*_{I_\mu} X^{[n]}}
  = \sum_{d} (-1)^d \ch H^d(X^{[n]}, \Wedge_{-u} \shfO^{[n]})
\end{equation*}
thanks to \eqref{eq:taut}. Again a stronger geometric result is known
(see \cite[5.2.1]{Scala})
\begin{equation*}
    H^d (X^{[n]}, \Wedge^k \shfO^{[n]})
  =
  \begin{cases}
    S^{n-k} \C[x,y] \otimes \Wedge^k \C[x,y] & \text{if $d=0$},
\\
    0 & \text{otherwise}.    
  \end{cases}
\end{equation*}

The module $\Wedge^k \C[x,y]$ is infinite dimensional as a vector
space. We can remedy it by replacing $\shfO^{[n]}$ by $\uE$ as follows.
Let us take the simplest case $k=1$ as an example.
As $\uE = \mathcal O_{X^{[n]}} - (1-q)(1-t) \shfO^{[n]}$, 
we have
\begin{equation*}
  \mathbf R\Gamma_{X^{[n]}}(\uE)
  = S^n\C[x,y] - S^{n-1}\C[x,y]
\end{equation*}
from the above consideration, where we have used
\begin{equation*}
  (1-q)(1-t) \C[x,y] \cong \C.
\end{equation*}
This is nicely packed into generating functions:
\begin{equation*}
  \sum_{n=0}^\infty
  u^n \ch \mathbf R\Gamma_{X^{[n]}}(\uE)
  = (1 - u)
  \sum_{n=0}^\infty u^n \ch \mathbf R\Gamma_{X^{[n]}}(\shfO_{X^{[n]}})
  = (1-u)\Omega[\frac{u}{(1-q)(1-t)}].
\end{equation*}
Thus we get a polynomial $1-u$ if we divide the answer by
\(
\Omega[\frac{u}{(1-q)(1-t)}].
\)
This is an example of more general phenomena, which we will see later
in \thmref{thm:HilbHilb} and \corref{cor:empty}.

\begin{NB}
  The convention in \cite{AS} and ours are slightly different: $t$ is
  replaced by $t^{-1}$, hence
  \begin{equation*}
    \Omega[\frac{u}{(1-q)(1-t^{-1})}] = \exp\left(
      \sum_{k=1}^\infty \frac{u^k}{k(1-q^{k})(1-t^{-k})}
    \right)
    = \exp\left(
      - \sum_{k=1}^\infty \frac{(u\sqrt{t}/\sqrt{q})^k}
      {k(q^{k/2}-q^{-k/2})(t^{k/2}-t^{-k/2})}
    \right).
  \end{equation*}
  is considered \cite[(6.1)]{AS}. The numerator $u\sqrt{t}/\sqrt{q}$
  is denoted by $\lambda$ in \cite{AS}, and later specialized as
  \(
    \lambda = t^N \sqrt{t}/\sqrt{q}.
  \)
  This means that $u = t^{N}$ or $u = t^{-N}$ for our $t$.
\end{NB}

\section{Specialization}

The original Chern-Simons gauge theory depends on two integers $N$ and
$k$, namely the rank of the gauge group $SU(N)$ and the level $k$. In
the corresponding quantum group approach, the Lie algebra is
$\mathfrak{su}(N)$ (or its complexification) and the quantum parameter
$q$ is taken as $q=\exp(\nicefrac{2\pi i}{k+N})$. In the refined
theory, another parameter $t$ appears so that we identify $q$, $t$
with parameters in Macdonald polynomials. But they are not
arbitrary, and given by
\begin{equation*}
  q = \exp(\nicefrac{2\pi i}{k+\beta N}), \quad
  t = \exp(\nicefrac{2\pi i\beta}{k+\beta N}),
\end{equation*}
where $\beta$ is a parameter for the refinement. The original
Chern-Simons theory corresponds to $\beta = 1$, and it is sometimes
better to think that $\beta$ is an integer parameter so that $q$ is a
root of unity. However we analytically continue $\beta$ to the
complex plane, as in \cite{AS}. (A similar thing already appears in
\cite[VI\S9]{Mac}.) Then we only impose the constraint
\begin{equation}
  \label{eq:qt}
  q^k t^N = 1.
\end{equation}
We study the behavior of Macdonald polynomials specialized at
\eqref{eq:qt}.

In fact, this specialization will be performed in two steps. In the
first step, we choose $N$. It corresponds to the projection from
$\Lambda$ to the ring $\Z[x_1,\dots, x_N]^{S_N}$ of symmetric
polynomials with finitely many variables. This is an elementary
step. In the second step, we choose $k$ and impose the condition
\eqref{eq:qt}.
In particular, we emphasize the role of $\Omega[uB_\mu(q,t)]^{-1}$
appeared naturally in \eqref{eq:taut}.

Note also that we {\it break\/} the symmetry between $q$, $t$ and $k$,
$N$ here. We first choose $t$ and $N$ to make a specialization. We can
also choose $q$ and $k$ first. But the answer will be different.

\subsection{Finite $N$}

There is a natural inner product, suitable for Macdonald polynomials
with a finite number of variables $x = (x_1,\dots, x_N)$. There are
two versions, but we use $\langle\ ,\ \rangle''_N$ \cite[VI,
(9.8)]{Mac}. It is defined by
\begin{equation}\label{eq:Ninner}
  \langle P_\lambda,P_\mu\rangle''_N = \delta_{\lambda\mu}
  \langle P_\lambda, P_\lambda\rangle_{q,t}
  \prod_{s\in\lambda} \frac{1 - q^{a'(s)} t^{N-l'(s)}}
  {1 - q^{a'(s)+1} t^{N-l'(s)-1}}
  = \delta_{\lambda\mu}\frac{c'_\lambda(q,t)}{c_\lambda(q,t)}
  \frac{\Omega[t^{N-1}q B_\lambda(q,t^{-1})]}
  {\Omega[t^{N}B_\lambda(q,t^{-1})]}
  \begin{NB}
  = \delta_{\lambda\mu}
  \prod_{s\in \lambda}\frac{1-t^{l(s)}q^{a(s)+1}}
  {1-t^{l(s)+1}q^{a(s)}}
  \frac{1- t^{N-l'(s)}q^{a'(s)}}{1-t^{N-l'(s)-1}q^{a'(s)+1}}
  \end{NB}%
  .
\end{equation}
Note that the right hand side vanishes if there is a box $s\in
\lambda$ such that $a'(s) = 0$, $l'(s) = N$, i.e., $l(\lambda) >
N$. This is compatible with the property \cite[VI (4.10)]{Mac} that
$P_\lambda(x_1,\dots,x_N;q,t) = 0$ if $l(\lambda) > N$. In fact, this
inner product has more natural definition on symmetric functions with
$N$ variables \cite[VI \S9]{Mac}. So the radical of $\langle\ ,\
\rangle''_N$ is the kernel of the projection $\Lambda\to
\Z[x_1,\dots,x_N]^{S_N}$.
This inner product was used in \cite[Appendix A]{AS}.
\begin{NB}
There was a typo in \cite{AS} in the earlier version.
\end{NB}%

\begin{NB}
This is an analog of $\langle\ ,\ \rangle_{q,t}$. So we consider
an analog for $\langle\ ,\ \rangle_*$. Let us define
$\langle\ ,\ \rangle_{*,N}$ by
\begin{equation*}
  \langle f, g \rangle_{*,N} = \langle
  f[(1-t)X], \omega g[(1-t)X]\rangle''_N.
\end{equation*}
Then
\begin{equation*}
  \begin{split}
  \langle \tilde H_\lambda, \tilde H_\mu \rangle_{*,N}
  = q^{n(\mu^t)} t^{n(\lambda)} c_\lambda(q,t^{-1}) c_\mu(q^{-1},t)
  \langle P_\lambda[-tX;q,t^{-1}], P_\mu(x;q,t^{-1})\rangle''_N
  = 
  \end{split}
\end{equation*}
But this seems difficult to compute................
\end{NB}

Let us recall a useful lemma, which will be used frequently later.

\begin{Lemma}[\protect{\cite[VI(4.17)]{Mac}}]\label{lem:cut}
If $l(\mu)=N$, we have
\[
  P_\mu(x_1,\dots,x_N;q,t)
  = x_1\cdots x_N P_{\mu-(1,\dots,1)}(x_1,\dots, x_N;q,t),
\]
where $\mu - (1,\dots,1)$ denotes a partition $(\mu_1 - 1,\dots,\mu_N-1)$.
\end{Lemma}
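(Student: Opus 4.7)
The plan is to prove the identity by showing that the right-hand side $Q := x_1 \cdots x_N \, P_{\mu - (1^N)}(x_1,\dots,x_N;q,t)$ satisfies the two defining properties of $P_\mu(x_1,\dots,x_N;q,t)$, namely (a) the triangularity $P_\mu = m_\mu + \sum_{\lambda < \mu} c_\lambda m_\lambda$ in the dominance order on partitions of length $\le N$, and (b) being a joint eigenfunction of Macdonald's $q$-difference operator $D_N$ with the prescribed eigenvalue. This is the natural route because both properties behave cleanly under multiplication by $e_N(x_1,\dots,x_N) = x_1 \cdots x_N$.

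For the triangularity step, note that in $N$ variables one has $e_N \cdot m_\nu = m_{\nu + (1^N)}$ for every partition $\nu$ of length $\le N$ (padded with zeros), since every monomial of $m_\nu$ is simply shifted by $(1,\dots,1)$ in its exponent vector and all resulting monomials remain distinct. Writing $P_{\mu - (1^N)} = m_{\mu - (1^N)} + \sum_{\nu < \mu - (1^N)} c_\nu \, m_\nu$ and multiplying by $e_N$ produces $Q = m_\mu + \sum_\nu c_\nu \, m_{\nu + (1^N)}$. Since the hypothesis $l(\mu) = N$ guarantees that $\mu - (1^N)$ is a genuine partition, the shift $\nu \mapsto \nu + (1^N)$ is a poset isomorphism onto $\{\lambda \le \mu : l(\lambda) \le N\}$ because dominance comparisons are preserved by adding a constant to each part, establishing (a).

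For the eigenvalue step, recall $D_N = \sum_{i=1}^N A_i(x;t)\, T_{q,x_i}$ where $T_{q,x_i}$ substitutes $qx_i$ for $x_i$. Since $T_{q,x_i}(e_N) = q \cdot e_N$ for every $i$, we get the intertwining relation $D_N \circ (e_N \cdot) = q \cdot (e_N \cdot) \circ D_N$. Applied to $P_{\mu - (1^N)}$, whose $D_N$-eigenvalue is $\sum_i q^{\mu_i - 1} t^{N-i}$, this yields that $Q$ is a $D_N$-eigenfunction with eigenvalue $\sum_i q^{\mu_i} t^{N-i}$, matching $P_\mu$. Combined with (a) and the uniqueness characterization of $P_\mu$ (unique symmetric polynomial of the form $m_\mu + \text{lower terms}$ that is an eigenvector of $D_N$, valid for generic $q,t$), we conclude $Q = P_\mu$.

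The main obstacle is not a technical difficulty but a bookkeeping one: ensuring that the dominance order on partitions of length $\le N$ bounded above by $\mu$ is in bijective, order-preserving correspondence with the analogous set below $\mu - (1^N)$; this is exactly where the hypothesis $l(\mu) = N$ is used, as it is precisely the condition that $\mu - (1^N)$ remains a partition. One minor subtlety worth remarking is that uniqueness via a single Macdonald operator relies on separation of the eigenvalues $\sum_i q^{\mu_i} t^{N-i}$ across partitions; this holds for $q,t$ generic and extends to the full field $\Q(q,t)$ by continuity, so no further operators are needed.
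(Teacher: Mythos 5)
Your proof is correct. The paper gives no argument for this lemma --- it is quoted verbatim from Macdonald \cite[VI(4.17)]{Mac} --- and your derivation (triangularity of $e_N\cdot P_{\mu-(1^N)}$ with respect to the monomial basis, plus the intertwining relation $D_N\circ(e_N\cdot)=q\,(e_N\cdot)\circ D_N$ and the uniqueness of the $D_N$-eigenfunction of the form $m_\mu+\text{lower}$) is exactly the standard proof, so there is nothing to compare beyond noting that for part (a) you only need $\nu<\mu-(1^N)\Rightarrow\nu+(1^N)<\mu$, which you have; the stronger ``poset isomorphism'' claim, while true (since $\lambda\le\mu$ forces $\lambda_N\ge\mu_N\ge1$), is not actually required.
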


\subsection{Specialization at $q^k t^N = 1$}\label{subsec:spec2}

We first specify what the specialization at $q^k t^N = 1$ means.
A naive guess suggests to consider $\Z[q^{\pm 1}, t^{\pm 1}]/(q^k t^N
= 1)$, but it has a trouble when $k$ and $N$ are not coprime. We
follow an idea in \cite{FJMM}.
\begin{NB}
  In fact, it seems that there is no problem at all. We have
  \begin{equation*}
    0 = q^k v^N - 1 = (q^{k/m}v^{N/m} - 1)(q^{k/m}t^{N/m} - \omega )
    \cdots (q^{k/m} t^{N/m} - \omega^{m-1}).
  \end{equation*}
But all expression we will have are forms of $1 - q^a t^b$.
\end{NB}%
Let $m = \operatorname{gcd}(k,N)$. We choose a primitive
$m^{\mathrm{th}}$ root of unity $\omega$. We also choose
$\omega_1\in\C$ such that $\omega_1^{N/m} = \omega$. We introduce a
new variable $v$ and set
\begin{equation*}
  q = v^{{N}}, \quad t = v^{{-k}}\omega_1.
\end{equation*}
\begin{NB}
Original version
\begin{equation*}
  t = v^{{k}/m}, \quad q = v^{-{N}/m} \omega_1.
\end{equation*}
\end{NB}%
For a $\Z[q^{\pm 1}, t^{\pm 1}]$-module $M$, we consider
$M\otimes_{\Z[q^\pm, t^\pm]}\C(v)$.
This procedure is what we mean by the specialization at $q^k t^N = 1$.
For example, we say $f\in M$ is nonzero at $q^k t^N = 1$ when
$f\otimes_{\Z[q^\pm, t^\pm]}\C(v)$ is nonzero. In this case $f^{-1}$
is well-defined in $\C(v)$. We say $f^{-1}$ is well-defined at $q^k
t^N = 1$.
\begin{NB}
We have
\begin{equation*}
  q^{1/N} = v,  \quad
  t^{-1/k} = v\omega_1^{-1/k}.
\end{equation*}
\end{NB}%

Strictly speaking, we should call this the specialization at $q^{k/m}
t^{N/m}
\begin{NB}
= v^{-Nk/m}\omega_1^{k/m} v^{kN/m}
\end{NB}%
= \omega$. But we do not for brevity. Anyway $\omega$ plays no
special role except the property $\omega^p = 1$ if and only if $p$ is
a multiple of $m$ in this paper.

\begin{NB}
  Later we use $u$, which is set $t^{-N}$. We have
  \begin{equation*}
    u = t^{N} = v^{{Nk}} = q^{-k}.
  \end{equation*}
\end{NB}

We consider
\[
   \Omega[t^NB_\lambda(q,t^{-1})]^{-1} 
   = \prod_{s\in\lambda} (1 - t^{N-l'(s)} q^{a'(s)}).
\]
We have the following simple observation.

\begin{Lemma}\label{lem:vanish}
  \textup{(1)} $\Omega[t^NB_\lambda(q,t^{-1})]^{-1}$ vanishes unless
  $l(\lambda)\le N$.

  \textup{(2)} Suppose $l(\lambda)\le N$. At $q^k t^N = 1$,
  $\Omega[t^NB_\lambda(q,t^{-1})]^{-1}$ vanishes if and only if
  $l(\lambda^t) > k$. Moreover the order of zero is $1$.
\end{Lemma}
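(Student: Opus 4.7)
The plan is to work directly from the product expansion
\[
\Omega[t^N B_\lambda(q,t^{-1})]^{-1} \;=\; \prod_{s\in\lambda}\bigl(1 - t^{N-l'(s)}q^{a'(s)}\bigr),
\]
which is just \eqref{eq:OmegaB} with $u = t^N$ and $t$ replaced by $t^{-1}$, and to identify precisely which factors can vanish.

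For part (1), if $l(\lambda) > N$ then the box $s_0 = (N+1,1)$ belongs to $\lambda$ and has $(a'(s_0), l'(s_0)) = (0, N)$, so the factor at $s_0$ equals $1 - t^{0}q^{0} = 0$ in $\Z[q^{\pm 1}, t^{\pm 1}]$, and the whole product vanishes identically, hence also after specialization.

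For part (2), substitute $q = v^N$, $t = v^{-k}\omega_1$ into the factor at $s$ to obtain
\[
1 - v^{\,Na'(s) - k(N-l'(s))}\,\omega_1^{\,N-l'(s)}.
\]
In $\C(v)$ an expression $1 - c\,v^n$ with $c\in\C^\times$ vanishes iff $n=0$ and $c=1$, so the factor vanishes iff the two conditions $Na'(s) + kl'(s) = kN$ and $\omega_1^{N-l'(s)} = 1$ hold simultaneously. Writing $k = mk_0$, $N = mN_0$ with $\gcd(k_0, N_0)=1$, the first condition becomes $N_0 a'(s) + k_0 l'(s) = k_0 N_0 m$, which forces $a'(s) = \alpha k_0$, $l'(s) = (m-\alpha)N_0$ for a unique integer $\alpha\in\{0,1,\dots,m\}$. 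Then $N - l'(s) = \alpha N_0$ and $\omega_1^{\alpha N_0} = \omega^{\alpha}$, so the root-of-unity condition reduces to $m\mid\alpha$, leaving only $\alpha\in\{0,m\}$. The case $\alpha=0$ forces $l'(s)=N$, which is excluded by $l(\lambda)\le N$; the case $\alpha=m$ corresponds to the single box $s=(1,k+1)$, which lies in $\lambda$ iff $\lambda_1 > k$, i.e.\ iff $l(\lambda^t)>k$. Thus at most one factor of the product vanishes, and it does so precisely when $l(\lambda^t)>k$, giving both the vanishing criterion and the fact that the order of zero is exactly one.

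The only genuinely delicate point, and the reason the two-step specialization in \secref{subsec:spec2} is set up the way it is, is that one must treat vanishing as vanishing in the field $\C(v)$, not merely numerical vanishing in $\C$: this is what forces us to impose \emph{both} the exponent condition $Na'(s)+kl'(s)=kN$ and the separate root-of-unity condition $\omega_1^{N-l'(s)}=1$. Once this is taken seriously, the computation above reduces to a clean arithmetic argument using $\gcd(k_0,N_0)=1$ and the order of $\omega$.
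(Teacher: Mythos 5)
Your proof is correct and takes essentially the same approach as the paper: part (1) is exactly the paper's observation about the box with $a'(s)=0$, $l'(s)=N$, and for part (2) you work out in full the exponent-plus-root-of-unity arithmetic (using $\gcd(k_0,N_0)=1$ and the order of $\omega$) that the paper uses for the ``if'' direction and explicitly leaves as an exercise, modeled on the proof of \lemref{lem:nonvanish}, for the converse and the order-one statement.
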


\begin{proof}
  (1) This is observed already above.

  (2) If $l(\lambda^t) > k$, we consider the box $s$ in the first
  column and $(k+1)^{\mathrm{th}}$ row. Then $l'(s) = 0$, $a'(s) =
  k$. Therefore $t^{N-l'(s)} q^{a'(s)} = q^k t^N = 1$. The converse
  and the second assertion can be proved as in the proof of
  \lemref{lem:nonvanish} below. We leave them as exercises for the
  reader.
\end{proof}

\begin{NB}
  On the other hand,
  \begin{equation*}
     \Omega[t^{N-1}q B_\lambda(q,t^{-1})]^{-1} 
   = \prod_{s\in\lambda} (1 - t^{N-l'(s)-1} q^{a'(s)+1})
  \end{equation*}
never vanish.
\end{NB}%

Suppose $l(\lambda)\le N$. Then the corresponding Young diagram
decomposes uniquely into the upper part consisting of rows shorter
than $N-1$ and the lower part of rows with exactly length $N$. Let us
denote partition corresponding to the upper part by
$\overline{\lambda}$. See Figure~\ref{fig:2Young}.

\begin{Lemma}\label{lem:nonvanish}
  Suppose $l(\lambda)\le N$

  \textup{(1)} $c_\lambda'(q,t)$ is nonzero at $q^kt^N = 1$.

  \textup{(2)} $c_\lambda(q,t)$ is zero at $q^k t^N = 1$ if and only
  if $l(\lambda^t) > k$ and $l(\overline{\lambda}^t) \le k$. Moreover
  the order of zero is $1$.
\end{Lemma}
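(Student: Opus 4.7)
The plan is to carry out a direct factor-by-factor analysis of both products, after expressing $q$ and $t$ in terms of the transcendental variable $v$ via $q = v^N$, $t = v^{-k}\omega_1$. Since $v$ is transcendental over $\C$, a monomial $v^a\omega_1^b$ equals $1$ in $\C(v)$ if and only if $a = 0$ and $\omega_1^b = 1$. A quick check shows that $\omega_1$ has order exactly $N$: indeed, $\omega_1^{N/m} = \omega$ has order $m$, so the order of $\omega_1$ is $N$. Hence for $l \in \Z$ we have $\omega_1^l = 1$ iff $N \mid l$.

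For (1), a factor $1 - q^{a(s)+1} t^{l(s)}$ of $c_\lambda'(q,t)$ equals $v^{N(a(s)+1) - k\,l(s)}\omega_1^{l(s)}$ subtracted from $1$, so it vanishes only if $N \mid l(s)$ and $N(a(s)+1) = k\,l(s)$. Since $l(\lambda) \le N$ forces $0 \le l(s) \le N-1$, the divisibility forces $l(s) = 0$, and the second equation then becomes $a(s) = -1$, contradicting $a(s) \ge 0$. Thus no factor vanishes and $c_\lambda'(q,t) \ne 0$.

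For (2), the analogous analysis of $1 - q^{a(s)} t^{l(s)+1}$ shows that it vanishes iff $N \mid (l(s)+1)$ and $N\,a(s) = k(l(s)+1)$. As $1 \le l(s)+1 \le N$, the divisibility forces $l(s) = N-1$, and then $a(s) = k$. Geometrically, $l(s) = N-1$ means $s$ lies in the first row and in a column of $\lambda$ of full height $N$; writing $s = (1,j)$, these are the columns $1 \le j \le p$ where $p = \lambda_1 - \bar\lambda_1$ is the number of columns of length exactly $N$. The condition $a(s) = \lambda_1 - j = k$ singles out the unique $j = \lambda_1 - k$, so such a box exists iff $1 \le \lambda_1 - k \le p$, which rearranges to $\lambda_1 > k$ and $\bar\lambda_1 \le k$, i.e., $l(\lambda^t) > k$ and $l(\bar\lambda^t) \le k$. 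Since at most one value of $j$ gives a vanishing factor, the zero has order exactly $1$.

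There is no real obstacle; the only care needed is in step~2, namely the bookkeeping of which boxes $s$ produce the vanishing factor and the translation back to the column-length conditions via the decomposition $\lambda = \bar\lambda \sqcup (\text{full columns})$. Once the transcendence of $v$ and the order of $\omega_1$ are in hand, the rest is a direct inspection of the Young diagram encoded in the figure.
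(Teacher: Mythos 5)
Your overall strategy --- a factor-by-factor analysis of the two products after substituting $q=v^N$, $t=v^{-k}\omega_1$, followed by the Young-diagram bookkeeping that locates the unique box with $l(s)=N-1$, $a(s)=k$ --- is the same as the paper's, and the combinatorial part (the translation to $l(\lambda^t)>k$, $l(\overline{\lambda}^t)\le k$ and the multiplicity-one statement) is correct. But there is a genuine gap at the very first step: the ``quick check'' that $\omega_1$ has order exactly $N$ is false. From $\omega_1^{N/m}=\omega$ with $\operatorname{ord}(\omega)=m$ one only gets $\operatorname{ord}(\omega_1)\mid N$ and $\operatorname{ord}(\omega_1)=m\cdot\gcd(\operatorname{ord}(\omega_1),N/m)$, which does not force $\operatorname{ord}(\omega_1)=N$. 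For instance, with $N=6$, $k=2$, $m=2$, $\omega=-1$, the choice $\omega_1=-1$ satisfies $\omega_1^{3}=\omega$ but has order $2$. Consequently the equivalence ``$\omega_1^{l}=1$ iff $N\mid l$'', on which both of your reductions rest, is unjustified as stated; in the example above $\omega_1^{2}=1$ while $6\nmid 2$.

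Your conclusion is nevertheless correct, because the divisibility $N\mid l(s)$ (resp.\ $N\mid(l(s)+1)$) is a consequence of the \emph{pair} of conditions, not of the root-of-unity condition alone. This is exactly how the paper argues: from $N(a(s)+1)=k\,l(s)$ and $\gcd(N/m,\,k/m)=1$ one gets $l(s)=pN/m$ for some integer $p\ge 0$, whence $\omega_1^{l(s)}=(\omega_1^{N/m})^{p}=\omega^{p}$, and this equals $1$ iff $m\mid p$, i.e.\ iff $N\mid l(s)$ --- and this holds for \emph{any} admissible choice of $\omega_1$. With this repair (and the analogous one for $c_\lambda$, where $l(s)+1=pN/m$ together with $l(s)+1\le N$ forces $p\in\{0,m\}$), the rest of your argument goes through verbatim and coincides with the paper's proof.
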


\begin{figure}[htbp]
  \centering
  \setlength{\unitlength}{1mm}
  \begin{picture}(100,65) \put(5,28){\makebox(4,4){$\lambda = $}}
    \put(20,10){\framebox(60,20){}}
    \put(30,38){\makebox(30,6){$\overline{\lambda}$ with
        $l(\overline{\lambda}) < N$}}
    \put(47,5){\vector(-1,0){27}}
    \put(53,5){\vector(1,0){27}}
    \put(48,3){\makebox(4,4){$N$}}
    \put(20,30){\line(0,1){30}}
    \put(20,60){\line(1,0){20}}
    \put(40,60){\line(0,-1){10}}
    \put(40,50){\line(1,0){20}}
    \put(60,50){\line(0,-1){5}}
    \put(60,45){\line(1,0){10}}
    \put(70,45){\line(0,-1){15}}
  \end{picture}
  \caption{Two Young diagrams}
  \label{fig:2Young}
\end{figure}
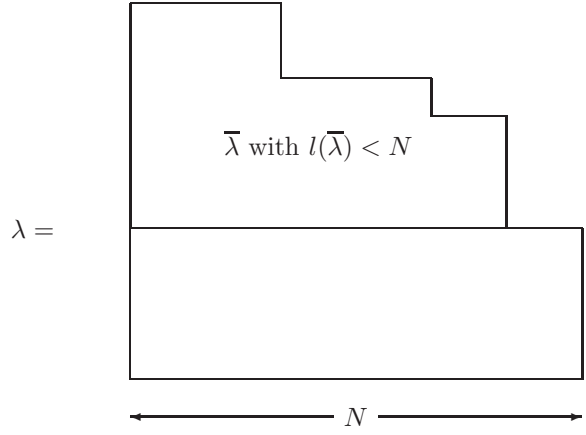

\begin{proof}
  We prove only (2). The assertion (1) is easier to prove.

  Recall $c_\lambda(q,t) = \prod_{s\in \lambda}(1-q^{a(s)}
  t^{l(s)+1})$. We have
  \begin{equation*}
    q^{a(s)} t^{l(s)+1} = \omega_1^{l(s)+1}
    v^{{N} a(s) - {k}(l(s)+1)}.
  \end{equation*}
  \begin{NB}
  \begin{equation*}
    q^{a(s)} t^{l(s)+1} = \omega_1^{a(s)}
    v^{\nicefrac{k}m(l(s)+1) - \nicefrac{N}m a(s)}.
  \end{equation*}
  \end{NB}%
  This is equal to $1$ if and only if 
  \begin{equation*}
    \frac{k}m (l(s)+1) = \frac{N}m a(s), \quad \omega_1^{l(s)+1} = 1.
  \end{equation*}
  Since $\nicefrac{k}m$ and $\nicefrac{N}m$ are coprime, there exists an
  integer $p$ such that $l(s)+1 = \nicefrac{Np}m$, $a(s) =
  \nicefrac{kp}m$. Therefore $\omega_1^{l(s)+1}
  \begin{NB}
  = \omega_1^{\nicefrac{Np}m}
  \end{NB}%
  = \omega^p$. Hence $p$ must be a multiple of $m$. On the other hand,
  $N\ge l(s)+1$ by our assumption $l(\lambda)\le N$. Therefore $p\le
  m$. Combining two conditions, we must have $p=0$ or $m$. The first
  case cannot happen as $l(s)+1 = 0$ is not possible. In the second
  case we have $l(s) = N-1$, $a(s) = k$. As $l(\lambda)\le N$, a box
  $s$ with $l(s)=N-1$ can occur only when $l(\lambda)=N$ and $s$ is in
  the first column. Then $a(s) = k$ means that $s$ is in the
  $(\lambda_1 -k)^{\mathrm{th}}$ row. Therefore we must have
  $l(\lambda^t) = \lambda_1 > k$. To have $l(s)=N-1$, this row must
  have length $N$, which means that it is not contained in
  $\overline{\lambda}$. It is equivalent to $l(\overline{\lambda}^t)\le k$.
  Converse direction is clear. The last assertion is clear from the proof.

  \begin{NB}
    Original version:

  Recall $c_\lambda(q,t) = \prod_{s\in \lambda}(1-q^{a(s)}
  t^{l(s)+1})$. We have
  \begin{equation*}
    q^{a(s)} t^{l(s)+1} = \omega_1^{a(s)}
    v^{{N} a(s) - {k}(l(s)+1)}.
  \end{equation*}
  \begin{NB2}
  \begin{equation*}
    q^{a(s)} t^{l(s)+1} = \omega_1^{a(s)}
    v^{\nicefrac{k}m(l(s)+1) - \nicefrac{N}m a(s)}.
  \end{equation*}
  \end{NB2}%
  This is equal to $1$ if and only if 
  \begin{equation*}
    \frac{k}m (l(s)+1) = \frac{N}m a(s), \quad \omega_1^{a(s)} = 1.
  \end{equation*}
  Since $\nicefrac{k}m$ and $\nicefrac{N}m$ are coprime, there exists an
  integer $p$ such that $l(s)+1 = \nicefrac{Np}m$, $a(s) =
  \nicefrac{kp}m$. Therefore $\omega_1^{a(s)}
  \begin{NB2}
  = \omega_1^{\nicefrac{kp}m}
  \end{NB2}%
  = \omega^p$. Hence $p$ must be a multiple of $m$. On the other hand,
  $N\ge l(s)+1$ by our assumption $l(\lambda)\le N$. Therefore $p\le
  m$. Combining two conditions, we must have $p=0$ or $m$. The first
  case cannot happen as $l(s)+1 = 0$ is not possible. In the second
  case we have $l(s) = N-1$, $a(s) = k$. As $l(\lambda)\le N$, a box
  $s$ with $l(s)=N-1$ can occur only when $l(\lambda)=N$ and $s$ is in
  the first column. Then $a(s) = k$ means that $s$ is in the
  $(\lambda_1 -k)^{\mathrm{th}}$ row. Therefore we must have
  $l(\lambda^t) = \lambda_1 > k$. To have $l(s)=N-1$, this row must
  have length $N$, which means that it is not contained in
  $\overline{\lambda}$. It is equivalent to $l(\overline{\lambda}^t)\le k$.
  Converse direction is clear. The last assertion is clear from the proof.
  \end{NB}

  \begin{NB}
    (1)
  For $c'_\lambda(q,t)$, we have
  \(
    q^{a(s)+1} t^{l(s)} = \omega_1^{a(s)+1}
    v^{\nicefrac{k}m l(s) - \nicefrac{N}m(a(s)+1)}.
  \)
  This is equal to $1$ if and only if
  \begin{equation*}
    \frac{k}m l(s) = \frac{N}m (a(s)+1), \quad \omega_1^{a(s)+1} = 1.
  \end{equation*}
  Therefore $l(s) = \nicefrac{Np}m$, $a(s)+1 =
  \nicefrac{kp}m$. Therefore $\omega_1^{a(s)+1} = \omega^p$. Thus $p$
  must be a multiple of $m$. But we must have $l(s)\le N-1$, hence
  $p=0$. But it is impossible as $a(s)+1 = 0$ cannot hold.
  \end{NB}%
\end{proof}

The above two vanishings are related as follows:
\begin{Lemma}
  \begin{equation*}
    \begin{split}
      {c_\lambda(q,t)}{\Omega[t^NB_\lambda(q,t^{-1})]} &=
      {c_{\overline{\lambda}}(q,t)}{\Omega[t^NB_{\overline{\lambda}}(q,t^{-1})]},
      \\
      {c_\lambda'(q,t)}{\Omega[t^{N-1}qB_\lambda(q,t^{-1})]} &=
      {c_{\overline{\lambda}}'(q,t)}{\Omega[t^{N-1}q
        B_{\overline{\lambda}}(q,t^{-1})]}.
    \end{split}
  \end{equation*}
\end{Lemma}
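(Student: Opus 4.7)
My plan is a direct row-by-row calculation. If $l(\lambda)<N$ then $\overline{\lambda}=\lambda$ and there is nothing to prove, so I will assume $l(\lambda)=N$ and set $e=\lambda_N\geq 1$, which gives $\overline{\lambda}_i=\lambda_i-e$ for $i=1,\dots,N$. Under the box correspondence $(i,j)\in\overline{\lambda}\leftrightarrow(i,j+e)\in\lambda$ (reinserting the $e$ removed columns of length $N$), arm and leg lengths are preserved but the co-arm $a'(s)=j-1$ shifts by $e$, while the co-leg $l'(s)=i-1$ is unchanged.

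For the first identity, the arm/leg preservation makes the factors from kept boxes cancel in $c_\lambda/c_{\overline{\lambda}}$, leaving only the contribution of the $N\times e$ block of removed boxes; restricted to row $i$ this equals
\begin{equation*}
\prod_{j=1}^{e}\bigl(1-q^{\lambda_i-j}t^{N-i+1}\bigr)=\prod_{c=\overline{\lambda}_i}^{\overline{\lambda}_i+e-1}\bigl(1-q^c t^{N-i+1}\bigr).
\end{equation*}
On the other side, writing $\Omega[t^N B_\mu(q,t^{-1})]^{-1}=\prod_{s\in\mu}(1-q^{a'(s)}t^{N-l'(s)})$, the co-arm shift makes the kept boxes in row $i$ contribute the ratio $\prod_{j=1}^{\overline{\lambda}_i}\frac{1-q^{j+e-1}t^{N-i+1}}{1-q^{j-1}t^{N-i+1}}$, and the $e$ new boxes contribute $\prod_{j=1}^{e}(1-q^{j-1}t^{N-i+1})$. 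Combining and cancelling indices reduces this to the same product $\prod_{c=\overline{\lambda}_i}^{\overline{\lambda}_i+e-1}(1-q^c t^{N-i+1})$, matching the $c$-side row by row and proving the first identity.

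The second identity I will prove by the same argument with $(a(s),\,l(s)+1)$ replaced by $(a(s)+1,\,l(s))$ on the $c'$-side and $(a'(s),\,N-l'(s))$ replaced by $(a'(s)+1,\,N-1-l'(s))$ on the $\Omega$-side; both ratios then reduce row by row to $\prod_{i=1}^{N}\prod_{c=\overline{\lambda}_i+1}^{\overline{\lambda}_i+e}(1-q^c t^{N-i})$. As a conceptual alternative, the first identity can be read off by applying \lemref{lem:cut} $e$ times to $P_\lambda$ and combining with Macdonald's principal specialization $P_\mu(1,t,\dots,t^{N-1};q,t)=t^{n(\mu)}/\bigl(c_\mu\,\Omega[t^N B_\mu(q,t^{-1})]\bigr)$ together with the elementary identity $n(\lambda)-n(\overline{\lambda})=eN(N-1)/2$; however, no analogous $Q$-function shortcut produces the $c'$ identity (that route merely reproves the first identity up to an extra factor), so the direct combinatorial approach is preferable overall. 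The only real obstacle throughout is the notational bookkeeping of the co-arm shift by $e$.
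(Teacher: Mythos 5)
Your direct computation is correct, and it takes a genuinely different route from the paper's. You verify both identities box by box: the correspondence $(i,j)\in\overline{\lambda}\leftrightarrow(i,j+e)\in\lambda$ preserves arms and legs (so the kept boxes cancel in $c_\lambda/c_{\overline\lambda}$ and $c'_\lambda/c'_{\overline\lambda}$), and the telescoping of the shifted co-arms on the $\Omega$-side reproduces exactly the contribution $\prod_i\prod_{c=\overline\lambda_i}^{\overline\lambda_i+e-1}(1-q^ct^{N-i+1})$ (resp.\ $\prod_i\prod_{c=\overline\lambda_i+1}^{\overline\lambda_i+e}(1-q^ct^{N-i})$) of the removed $N\times e$ block; I checked the index bookkeeping and it is right. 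The paper instead remarks that a direct proof is possible but derives the first identity from \lemref{lem:cut} together with the principal specialization \eqref{eq:first} (at $t^{(N-1)/2},\dots,t^{(1-N)/2}$, where $x_1\cdots x_N=1$) and the count $2n(\lambda)-2n(\overline\lambda)=(N-1)(|\lambda|-|\overline\lambda|)$ --- essentially the ``conceptual alternative'' you sketch. Where you and the paper part ways is the second identity: you assert that no analogous shortcut exists, but the paper obtains it immediately from the first identity together with \eqref{eq:Ninner} and the invariance $\langle P_\lambda,P_\lambda\rangle''_N=\langle P_{\overline\lambda},P_{\overline\lambda}\rangle''_N$, since \eqref{eq:Ninner} writes $\langle P_\lambda,P_\lambda\rangle''_N$ precisely as the ratio of the two quantities in the lemma. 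So your side remark is off the mark, though it does not affect the validity of your argument. What each approach buys: yours is self-contained and elementary, needing no facts about Macdonald polynomials; the paper's is shorter and explains \emph{why} the lemma holds (both quantities are repackagings of specializations and norms that are manifestly insensitive to adding columns of height $N$).
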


\begin{NB}
The equality in (2) means that the zero of $c_\lambda(q,t)$ cancels
with the pole of $\Omega[t^N B_\lambda(q,t^{-1})]$, as th right hand
side ${c_{\nu}(q,t)}{\Omega[t^NB_\nu(q,t^{-1})]}$ is nonzero.
\end{NB}%

\begin{proof}
  These assertions can be proved directly, but also follow from
  \eqref{eq:first} below and \lemref{lem:cut}: We have
  $P_\lambda(t^{\nicefrac{(N-1)}2},\dots,t^{\nicefrac{(1-N)}2};q,t) =
  P_{\overline{\lambda}}(t^{\nicefrac{(N-1)}2},\dots,t^{\nicefrac{(1-N)}2};q,t)$. And
  $2n(\lambda) - 2n(\overline{\lambda}) =
  (\lambda_1-\overline{\lambda}_1)N(N-1) =
  (N-1)(|\lambda|-|\overline{\lambda}|)$. Hence we get the first
  equality. The second equality follows from \eqref{eq:Ninner} and the
  fact $\langle P_\lambda,P_\lambda\rangle''_N = \langle
  P_{\overline{\lambda}}, P_{\overline{\lambda}}\rangle''_N$, which is
  a consequence of the definition in \cite[VI \S9]{Mac}.
\end{proof}

\begin{NB}
  These results probably related to \cite[Lemma~2.1]{FJMM}. They
  introduced a condition called the $(k,r,n)$-admissibility. In our
  notation, it reads as $\lambda$ is {\it admissible\/} if and only if
  \begin{equation*}
    \lambda_i - \lambda_{i+N-1} \ge k+1 
    \quad\text{for $i=1,\dots, l(\lambda)-N+1$}.
  \end{equation*}
  or equivalently
  \begin{equation*}
    \lambda_{j-N+1} - \lambda_{j} \ge k+1 
    \quad\text{for $j=N,\dots, l(\lambda)$}.
  \end{equation*}
\end{NB}

\begin{NB}
\subsection{}

  We will later consider
  \begin{equation*}
    \Omega[\frac{u}{(1-q)(1-t)}] = \exp\left(
      \sum_{k=1}^\infty \frac{u^k}{k(1-q^k)(1-t^k)}
    \right).
  \end{equation*}
  This is an element in $\Q(q,t)[[u]]$. But we want to substitute $u =
  t^{-N}$ later, so we need to expand this first with respect to $q$
  and $t$. We consider
  \begin{equation*}
    \Omega[\frac{u}{(1-q)(1-t)}]
    = \Omega[u(1+q+\dots)(1+t+\dots)]
    = \exp\left(
      \sum_{m,n=0}^\infty \sum_{l=1}^\infty \frac{u^l q^{ml} t^{nl}}l
      \right).
  \end{equation*}
  So the constant term with respect
  to $q$, $t$ is
\(
   \exp\left(\sum_{k=1}^\infty \frac{u^k}{k}\right)
   = \nicefrac1{1-u}.
\)
Therefore $\Omega[u/(1-q)(1-t)]$ is in $\Q[[u]][[q,t]]$. So it is not
clear why we can substitute $u = t^{-N}$ in this expansion.

If we formally substitute $u = t^{-N}$, we get
  \begin{equation*}
    \exp\left(
      \sum_{m,n=0}^\infty \sum_{l=1}^\infty \frac{q^{ml} t^{(n-N)l}}l
      \right).
  \end{equation*}
How we interpret this ?

\begin{NB2}
  It may be understood as
  \begin{equation*}
    \Omega[\frac{u}{(1-q)(1-t)}]
    = \Omega[\frac{uq^{-1}t^{-1}}{(1-q^{-1})(1-t^{-1})}]
    \in \Q[[u]][[q^{-1}, t^{-1}]].
  \end{equation*}
  But this is different from the usual understanding of
  $\Omega[u/(1-q)(1-t)]$ as $\sum u^n \ch \mathbf
  R\Gamma_{X^{[n]}}(\shfO_{X^{[n]}})$, so we must be careful.
\end{NB2}

We recall the computation of \cite[\S6.2]{AS} on
$S_{\emptyset\emptyset}$.

Suppose that $t = q^{-\beta}$ with an integer $\beta$. Then
\begin{equation*}
  \begin{split}
  & \exp\left[\sum_{k=1}^\infty \frac{t^{-Nk}}{k}\frac1{(1-q^k)(1-t^k)}\right]
\\
=\; &
  \exp\left[-\sum_{k=1}^\infty \frac{t^{-Nk}}k
    \frac{(1-q^{\beta k})t^{k}}{(1-q^k)(1-t^k)^2}\right]
\\
=\; &
  \exp\left[-\sum_{m=0}^{\beta-1} \sum_{k=1}^\infty \frac{t^{-Nk}q^{mk}}k
    \frac{t^k}{(1-t^k)^2}\right]
\\
=\; &
  \exp\left[-\sum_{m=0}^{\beta-1} \sum_{k=1}^\infty \frac{t^{-Nk}q^{mk}}k
    \sum_{i=1}^\infty i t^{ki}\right]
\\
=\; &
  \exp\left[-\sum_{m=0}^{\beta-1} \sum_{i,k=1}^\infty \frac{i}k
    t^{-(N-i)k}q^{mk} \right]
\\
=\; &
   \prod_{m=0}^{\beta-1} \prod_{i=1}^\infty (1 - t^{-(N-i)}q^m)^i.
  \end{split}
\end{equation*}
In \cite[(6.2)]{AS} it is written that this is the `large $N$ limit of'
\begin{equation*}
  S_{\emptyset\emptyset}(q,t^{-1}) = 
     \prod_{m=0}^{\beta-1} \prod_{i=1}^{N-1} (1 - t^{-(N-i)}q^m)^i.
\end{equation*}
Where the difference
\begin{equation*}
     \prod_{m=0}^{\beta-1} \prod_{i=N}^{\infty} (1 - t^{-(N-i)}q^m)^i
     = \prod_{m=0}^{\beta-1} \prod_{i=0}^{\infty} (1 - t^{i}q^m)^{i+N}.
\end{equation*}
go ?

If we replace the product by $\prod_{i=1}^{N-1}$ and then follow the
computation back, we get
\begin{equation*}
  \begin{split}
   & \prod_{m=0}^{\beta-1} \prod_{i=1}^{N-1} (1 - t^{-(N-i)}q^m)^i
\\
=\; &
  \exp\left[-\sum_{m=0}^{\beta-1} \sum_{k=1}^\infty \frac{t^{-Nk}q^{mk}}k
    \sum_{i=1}^{N-1} i t^{ki}\right]
\\
=\; &
  \exp\left[-\sum_{m=0}^{\beta-1} \sum_{k=1}^\infty \frac{t^{-Nk}q^{mk}}k
    \frac{t^k(1-N t^{k(N-1)}+(N-1)t^{kN})}{(1-t^k)^2}\right]
\\
=\; &
  \exp\left[-\sum_{k=1}^\infty \frac{t^{-Nk}}k
    \frac{(1-q^{\beta k})t^{k}(1-N t^{k(N-1)}+(N-1)t^{kN})}
    {(1-q^k)(1-t^k)^2}\right]
\\
=\; & \exp\left[\sum_{k=1}^\infty \frac{t^{-Nk}}{k}\frac
    {(1-N t^{k(N-1)}+(N-1)t^{kN})}
    {(1-q^k)(1-t^k)}\right].
  \end{split}
\end{equation*}
\begin{NB2}
Starting from here, we find
\begin{equation*}
  \begin{split}
  & \Omega[\frac{t^{-N}-Nt^{-1} + N-1}{(1-q)(1-t)}]
  = \Omega[\frac1{1-q}(t^{-1}+ t^{-2} + \dots + t^{-N} - N t^{-1})].
  \end{split}
\end{equation*}
Then we substitute
\begin{equation*}
  \frac1{1-q} = \frac{1-q^\beta}{1-q}\frac1{1-t^{-1}}
\end{equation*}
to get
\begin{equation*}
  \Omega[\frac{1-q^\beta}{1-q}
  \sum_{i=1}^{N-1} i t^{-(N-i)}]
  = \prod_{m=0}^{\beta-1} \prod_{i=1}^{N-1} (1 - t^{-(N-i)}q^m)^i.
\end{equation*}
\end{NB2}%
Thus the remaining term is
\begin{equation*}
  \begin{split}
  & \exp\left[\sum_{k=1}^\infty \frac{t^{-Nk}}{k}\frac
    {(-N t^{k(N-1)}+(N-1)t^{kN})}
    {(1-q^k)(1-t^k)}\right]
  =   \exp\left[\sum_{k=1}^\infty \frac{1}{k}\frac
    {(-N t^{-k}+(N-1))}
    {(1-q^k)(1-t^k)}\right]
\\
  =\; &
  \Omega[\frac{-Nt^{-1} + N-1}{(1-q)(1-t)}]
  = \Omega[-\frac{-Nt^{-2} + (N-1)t^{-1}}{(1-q)(1-t^{-1})}].
  \end{split}
\end{equation*}
This is well-defined as a formal power series in $q$, $t^{-1}$.
\end{NB}

\section{Refined Hopf link invariants}
\begin{NB}
\begin{equation*}
  \bt = \sqrt{t}, \bq = -\sqrt{q/t}, \ba^2 = t^N \sqrt{t/q}
\end{equation*}
\begin{equation*}
  t = \bt^2, q = \bq^2 \bt^2, t^N = - \ba^2 \bq, 
\end{equation*}
\end{NB}

\subsection{$S$ and $T$ matrices}

We fix $N$.
Let us define the normalized $S$-matrix by
\begin{equation}\label{eq:Snorm}
  \Snorm_{\lambda\mu}(q,t) \defeq
  q^{-\nicefrac{|\lambda||\mu|}N}
  P_{\lambda}(t^{\nicefrac{(N-1)}2},\dots,t^{\nicefrac{(1-N)}2};q,t)
  P_{\mu}(t^{\nicefrac{(N-1)}2} q^{\lambda_1},\dots,
  t^{\nicefrac{(1-N)}2} q^{\lambda_N};q,t),
\end{equation}
as in \cite[(5.12)]{AS}.
Here $\lambda$ and $\mu$ are partitions. We do not impose any
restriction at this moment, but will do soon below.
\begin{NB}
I made a mistake. It was first set
\begin{equation*}
  \Snorm_{\lambda\mu}(q,t) \defeq
  P_{\lambda}(t^{\nicefrac12},\dots,t^{N-\nicefrac12};q,t)
  P_{\mu}(t^{\nicefrac12} q^{-\lambda_1},\dots,t^{N-\nicefrac12} q^{-\lambda_N};q,t).
\end{equation*}
So we need to multiply $t^{1-N/2}$ to the following formulas.
\end{NB}%
\begin{NB}
  The factor $q^{-\nicefrac{|\lambda||\mu|}N}$ is added in July 13. It
  comes as
  \begin{equation*}
    q^{-\nicefrac{|\lambda||\mu|}N}
    P_{\mu}(t^{\nicefrac{(N-1)}2} q^{\lambda_1},\dots,
  t^{\nicefrac{(1-N)}2} q^{\lambda_N};q,t)
  = P_\mu(t^{\nicefrac{(N-1)}2} q^{\lambda_1-\nicefrac{|\lambda|}N},\dots,
  t^{\nicefrac{(1-N)}2} q^{\lambda_N-\nicefrac{|\lambda|}N};q,t)
  \end{equation*}
\end{NB}%

It is known that $\Snorm_{\lambda\mu}(q,t)$ is symmetric under the
exchange of $\lambda$ and $\mu$. This is the Koornwinder-Macdonald
reciprocity formula \cite[VI(6.6)]{Mac}. It is compatible with the
expectation that $\Snorm_{\lambda\mu}$ is the refined colored HOMFLY
polynomial of a Hopf link, where colors are $\lambda$ and $\mu$.
We will see this later in \thmref{thm:symmetry}.

Let us note that this vanishes unless $l(\lambda), l(\mu) \le N$ by
\cite[VI(4.10)]{Mac}.
Note also that \lemref{lem:cut} gives
\begin{equation}\label{eq:trunc}
  \Snorm_{\lambda\mu}(q,t) = 
  \Snorm_{\lambda\overline{\mu}}(q,t),
  \quad
  \Snorm_{\lambda\mu}(q,t) = 
  \Snorm_{\overline{\lambda}\mu}(q,t),
\end{equation}
\begin{NB}
  \begin{equation*}
    q^{-\nicefrac{|\lambda||\mu|}N}
    = q^{-\nicefrac{|\lambda|(|\mu-(1,\dots,1)|+N)}N}
    = q^{-\nicefrac{|\lambda||\mu-(1,\dots,1)|}N} q^{-|\lambda|}.
  \end{equation*}
\end{NB}%
\begin{NB}
This is the version before July 13:
\begin{equation}\label{eq:trunc-orig}
  \Snorm_{\lambda\mu}(q,t) = q^{-|\lambda|(\mu_1-\overline{\mu}_1)}
  \Snorm_{\lambda\overline{\mu}}(q,t),
  \quad
  \Snorm_{\lambda\mu}(q,t) = q^{-|\mu|(\lambda_1-\overline{\lambda}_1)}
  \Snorm_{\overline{\lambda}\mu}(q,t),
\end{equation}
\end{NB}%
where $\overline{\lambda}$, $\overline{\mu}$ are as in
Figure~\ref{fig:2Young} as before.

\begin{NB}
  Let us fix $\overline{\lambda}$, $\overline{\mu}$ and take a sum
  over all $\lambda$, $\mu$ with given $\overline{\lambda}$,
  $\overline{\mu}$. We first consider the sum over $\mu$ only:
  \begin{equation*}
    \sum_{\mu} \Snorm_{\overline\lambda\mu}(q,t)
    = \Snorm_{\overline\lambda\overline\mu}(q,t)\times
    (1 + q^{-|\overline\lambda|} + q^{-2|\overline\lambda|}+\dots)
    = \frac1{1-q^{-|\overline\lambda|}} \Snorm_{\overline\lambda\overline\mu}(q,t),
  \end{equation*}
and more generally
  \begin{equation*}
    \sum_{\mu} \Snorm_{\overline\lambda+(k,\dots,k) \mu}(q,t)
    = \sum_\mu q^{-|\mu|k} \Snorm_{\overline\lambda \mu}(q,t)
    = q^{-|\overline\mu|k} \Snorm_{\overline\lambda\overline\mu}(q,t) \times
    \sum_{l=0}^\infty q^{-Nlk-|\overline\lambda| l}
    = \frac{q^{-|\overline\mu|k}}{1-q^{-|\overline\lambda|-Nk}}
    \Snorm_{\overline\lambda\overline\mu}(q,t).
  \end{equation*}
Therefore
\begin{equation*}
    \sum_{\lambda,\mu} \Snorm_{\lambda\mu}(q,t)
    = \Snorm_{\overline\lambda\overline\mu}(q,t) \times
    \sum_{k,l=0}^\infty q^{-|\overline\mu|k-|\overline\lambda|l-N kl}.
\end{equation*}

Probably this computation is not relevant, as we only take the sum
over one partition in the matrix computation.

\begin{NB2}
On July 13: Now it becomes better:
\begin{equation*}
  \sum_\mu \Snorm_{\overline\lambda\mu}(q,t)
  = (1 + 1 + 1 + \cdots)\Snorm_{\overline\lambda\overline\mu}(q,t).
\end{equation*}
So we get always $\infty$, which is the same, independent of
$\overline\mu$.
\end{NB2}
\end{NB}%

Let us rewrite $\Snorm$ in terms of a modified Macdonald polynomial.
The first term in \eqref{eq:Snorm} is given in \cite[(6.11')]{Mac}:
\begin{equation*}
  P_{\lambda}(t^{\nicefrac{(N-1)}2},\dots,t^{\nicefrac{(1-N)}2};q,t)
   = t^{-\nicefrac{(N-1)|\lambda|}2+n(\lambda)}
   \prod_{s\in\lambda} \frac{1 - q^{a'(s)}t^{N-l'(s)}}
   {1 - q^{a(s)} t^{l(s)+1}}.
\end{equation*}

For example, the case $\lambda=\square$ gives
\begin{equation*}
  P_{\square}(t^{\nicefrac{(N-1)}2},\dots,t^{\nicefrac{(1-N)}2};q,t)
  = t^{\nicefrac{N-1}2} + \dots + t^{\nicefrac{(1-N)}2}
  = \frac{t^{\nicefrac{N}2} - t^{-\nicefrac{N}2}}
  {t^{\nicefrac12}-t^{-\nicefrac12}}.
\end{equation*}
This is $\Snorm_{\square\emptyset}(q,t)$, and is the refined
Chern-Simons partition function for the unknot colored by the vector
representation.

\begin{NB}
  On the other hand, the right hand side gives
  \begin{equation*}
    t^{-(N-1)/2} \frac{1- t^N}{1-t} = \frac{t^{\nicefrac{N}2} - t^{-\nicefrac{N}2}}
    {t^{\nicefrac12}-t^{-\nicefrac12}}.
  \end{equation*}
\end{NB}%
The product of denominator is $c_\lambda(q,t)$ in
\secref{sec:part}. The numerator gives $\Omega[t^N
B_\lambda(q,t^{-1})]^{-1}$ by \eqref{eq:OmegaB}.
Therefore
\begin{equation}\label{eq:first}
  P_{\lambda}(t^{\nicefrac{(N-1)}2},\dots,t^{\nicefrac{(1-N)}2};q,t)
= \frac{t^{-\nicefrac{(N-1)|\lambda|}2 + n(\lambda)}}
  {\Omega[t^{N}B_\lambda(q,t^{-1})]c_\lambda(q,t)}.
\end{equation}
\begin{NB}
  \begin{equation*}
    \begin{split}
    & \Omega[t^{N}B_\lambda(q,t^{-1})]^{-1}
    = \prod_{s\in\lambda} (1 - t^N q^{a'(s)} t^{-l'(s)})
    = (-t)^{N|\lambda|} q^{n(\lambda^t)} t^{-n(\lambda)}
    \prod_{s\in\lambda} (1 - t^{-N} q^{-a'(s)} t^{l'(s)})
\\
   = \; & (-t)^{N|\lambda|} q^{n(\lambda^t)} t^{-n(\lambda)}
    \Omega[t^{-N}B_\lambda(q^{-1},t)]^{-1}.
    \end{split}
  \end{equation*}
\end{NB}%

Next we rewrite the second term using the plethystic substitution
\subsecref{subsec:pleth}. Note that
\begin{equation*}
  \begin{split}
  & t^{\nicefrac{(N-1)}2} q^{\lambda_1} + \dots +
    t^{\nicefrac{(1-N)}2} q^{\lambda_N}
  \begin{NB}
  = t^{\nicefrac{(N-1)}2} \sum_{i=1}^{N} t^{1-i} q^{\lambda_i}    
  \end{NB}%
\\
  =\; & t^{\nicefrac{-(N-1)}2}\left[
  \frac{1 - t^N}{1-t}
  - (1 - q) t^{N-1} \sum_{i=1}^N \sum_{j=1}^{\lambda_i} t^{1-i} q^{j-1}\right]
\\
  =\; & 
  \begin{NB}
  t^{\nicefrac{-(N-1)}2}\left[
  \frac{1 - t^N}{1-t}
  + \frac{t^N (1 - q)(1 - t^{-1}) B_\lambda(q,t^{-1})}{1-t}\right]
  = 
  \end{NB}%
    \frac{t^{\nicefrac{-(N-1)}2}}{1-t}\left(
    1 - t^{N} A_\lambda(q,t^{-1})\right).
  \end{split}
\end{equation*}
Therefore
\begin{equation*}
  P_\mu(
  t^{\nicefrac{(N-1)}2} q^{\lambda_1}, \dots, t^{\nicefrac{(1-N)}2} q^{\lambda_N};
  q,t)
  \begin{NB}
  =
  P_\mu[\frac{t^{\nicefrac{-(N-1)}2}}{1-t}\left(
    1 - t^{N} A_\lambda(q,t^{-1})\right);q,t]
  \end{NB}%
  = t^{\nicefrac{-(N-1)|\mu|}2} P_\mu[\frac{1 - t^{N} A_\lambda(q,t^{-1})}
    {1-t};q,t].
\end{equation*}

\begin{NB}
  If $\lambda=\emptyset$ ($A_\lambda(q,t) = 1$ and $\mu = \square$,
  this is equal to
\(
   t^{-(N-1)/2} (1-t^N)/(1-t) = (t^{N/2} - t^{-N/2})/(t^{1/2} - t^{-1/2}).
\)
It is important that one is power $t^N$ and the other is $t^{-N/2}$ to
have a cancellation.
\end{NB}

Multiplying two factors, we get
\begin{Proposition}\label{prop:Snorm}
\begin{equation*}
  \Snorm_{\lambda\mu}(q,t) 
  = q^{-\nicefrac{|\lambda||\mu|}N}
  \frac{t^{n(\lambda)+n(\mu) - \nicefrac{(N-1)(|\lambda|+|\mu|)}2}}
  {c_\lambda(q,t) c_\mu(q,t)}
  \frac{\tilde H_\mu[1 - t^{N}A_\lambda(q,t^{-1});q,t^{-1}]}
    {\Omega[t^{N}B_\lambda(q,t^{-1})]}.
\end{equation*}
\end{Proposition}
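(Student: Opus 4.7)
The plan is to multiply the two factors in the definition \eqref{eq:Snorm} after rewriting each one separately, and the work is essentially already done in the paragraphs immediately preceding the Proposition. The first factor $P_\lambda(t^{(N-1)/2},\dots,t^{(1-N)/2};q,t)$ is handled by Macdonald's principal specialization \cite[VI(6.11')]{Mac}, which the excerpt records as \eqref{eq:first}: the product of denominators is $c_\lambda(q,t)$ and the product of numerators is $\Omega[t^N B_\lambda(q,t^{-1})]^{-1}$, yielding the $t^{-(N-1)|\lambda|/2+n(\lambda)}$ prefactor.

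For the second factor, the key computation is the plethystic rewriting of the shifted principal specialization that was just performed: one checks
\[
t^{(N-1)/2}q^{\lambda_1} + \dots + t^{(1-N)/2}q^{\lambda_N} = \frac{t^{-(N-1)/2}}{1-t}\bigl(1 - t^N A_\lambda(q,t^{-1})\bigr),
\]
by expanding the difference $\sum_i t^{\nicefrac{(N-1)}2-(i-1)}(q^{\lambda_i}-1)$ as a telescoping sum and using $A_\lambda = 1 - (1-q)(1-t)B_\lambda$ (see \eqref{eq:universal} and the definitions in \secref{sec:part}). Homogeneity of $P_\mu$ then pulls out $t^{-(N-1)|\mu|/2}$ and leaves $P_\mu\bigl[(1 - t^N A_\lambda(q,t^{-1}))/(1-t);q,t\bigr]$.

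The remaining step is to convert this plethystic value of $P_\mu$ into the modified Macdonald polynomial $\tilde H_\mu$. From the definition
\(\tilde H_\mu(x;q,t) = t^{n(\mu)} J_\mu[X/(1-t^{-1});q,1/t]\)
and $J_\mu = c_\mu P_\mu$, replacing $t$ by $t^{-1}$ gives the identity
\[
P_\mu\!\left[\tfrac{Y}{1-t};q,t\right] \;=\; \frac{t^{n(\mu)}}{c_\mu(q,t)}\,\tilde H_\mu[Y;q,t^{-1}],
\]
valid for any argument $Y$. Substituting $Y = 1 - t^N A_\lambda(q,t^{-1})$ produces exactly the factor $\tilde H_\mu[1-t^N A_\lambda(q,t^{-1});q,t^{-1}]/c_\mu(q,t)$ together with the extra $t^{n(\mu)}$.

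Multiplying the two factors and collecting the prefactor $q^{-|\lambda||\mu|/N}$ from \eqref{eq:Snorm} yields the claimed formula. There is no genuine obstacle here; the proof is a bookkeeping exercise combining \eqref{eq:first}, the plethystic identity for the shifted specialization, and the definition of $\tilde H_\mu$. The only thing to watch is the sign/inversion convention on $t$ in the definition of $\tilde H_\mu$, which is precisely why $\tilde H_\mu$ appears with parameter $t^{-1}$ on the right-hand side.
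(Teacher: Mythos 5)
Your proof is correct and follows essentially the same route as the paper, which derives the formula by combining the principal specialization \eqref{eq:first} with the plethystic rewriting of the shifted specialization in the paragraphs just before the Proposition. The only difference is that you make explicit the conversion $P_\mu[Y/(1-t);q,t] = \frac{t^{n(\mu)}}{c_\mu(q,t)}\tilde H_\mu[Y;q,t^{-1}]$, which the paper leaves implicit in the phrase ``multiplying two factors''; that identity is exactly the definition of $\tilde H_\mu$ with $t$ replaced by $t^{-1}$, so the step is sound.
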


At this stage it is clear that this is well-defined element in
$\Q(q,t)$, as $\tilde H_\mu\in \Z[q^{\pm 1}, t^{\pm 1}]\otimes
\Lambda$. Moreover it vanishes unless $l(\lambda),l(\mu)\le N$ as we
remarked before.

Next we consider the specialization at $q^k t^N = 1$. We understand
$q^{1/N}$ in the expression \eqref{eq:Snorm} as $v$ in
\subsecref{subsec:spec2}. However we keep the notation $q^{1/N}$.

We have the following properties at the specialization at $q^k t^N =
1$.

\begin{Lemma}\label{lem:reduction}
  Assume $l(\lambda), l(\mu)\le N$. Let $\overline{\lambda}$,
  $\overline{\mu}$ as in Figure~\ref{fig:2Young}.
  Then $\Snorm_{\lambda\mu}(q,t)$ is
  well-defined at $q^k t^N = 1$. Moreover, it vanishes at $q^k t^N =
  1$ unless $l(\overline{\lambda}^t), l(\overline{\mu}^t) \le k$.
\end{Lemma}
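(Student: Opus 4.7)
The plan is to reduce first to the case where both partitions have all rows of length strictly less than $N$, then read off the vanishing behaviour from the explicit formula in \propref{prop:Snorm} using the already-established specialization lemmas, and finally handle the $\mu$-side via the Koornwinder-Macdonald reciprocity.

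Concretely, the first step is to iterate \eqref{eq:trunc}: peeling off one row of length $N$ at a time, one obtains, as an identity of rational functions in $q$, $t$,
\[
   \Snorm_{\lambda\mu}(q,t) = \Snorm_{\overline\lambda\,\overline\mu}(q,t).
\]
This reduction is lossless at the level of rational functions, so well-definedness at $q^k t^N = 1$ for $\Snorm_{\lambda\mu}$ is equivalent to well-definedness for $\Snorm_{\overline\lambda\,\overline\mu}$. The crucial gain is that $\overline\lambda$, $\overline\mu$ have all rows of length $<N$, hence $\overline{\overline\lambda}=\overline\lambda$ and $\overline{\overline\mu}=\overline\mu$.

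The second step is to apply \propref{prop:Snorm} to $\Snorm_{\overline\lambda\,\overline\mu}$ and inspect each factor at the specialization. The potential poles sit in $c_{\overline\lambda}(q,t)\,c_{\overline\mu}(q,t)$ and in $\Omega[t^N B_{\overline\lambda}(q,t^{-1})]$. By \lemref{lem:nonvanish}(2) applied to $\overline\lambda$, the factor $c_{\overline\lambda}$ vanishes at $q^k t^N=1$ only if simultaneously $l(\overline\lambda^t)>k$ and $l(\overline{\overline\lambda}^t)\le k$; but $\overline{\overline\lambda}=\overline\lambda$, so these two conditions contradict and $c_{\overline\lambda}$ does not vanish. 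The same argument gives $c_{\overline\mu}\ne 0$. Moreover, $1/\Omega[t^N B_{\overline\lambda}(q,t^{-1})]=\prod_{s\in\overline\lambda}(1-q^{a'(s)}t^{N-l'(s)})$ by \eqref{eq:OmegaB} is a Laurent polynomial, so $\Omega[t^N B_{\overline\lambda}(q,t^{-1})]$ only contributes (possibly) a \emph{zero} to $\Snorm$, never a pole. Finally $\tilde H_{\overline\mu}[1-t^N A_{\overline\lambda}(q,t^{-1});q,t^{-1}]$ is a Laurent polynomial in $q$, $t$ because $\tilde H_{\overline\mu}\in\Z[q^{\pm 1},t^{\pm 1}]\otimes\Lambda$. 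Combined with the evident Laurent-polynomial nature of the $q^{-|\lambda||\mu|/N}$ and $t$-prefactors (after the substitution $q^{1/N}=v$), this proves well-definedness of $\Snorm_{\lambda\mu}(q,t)$ at $q^k t^N=1$.

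For the vanishing statement, assume first $l(\overline\lambda^t)>k$. Then \lemref{lem:vanish}(2) applied to $\overline\lambda$ (note $l(\overline\lambda)<N$) shows $1/\Omega[t^N B_{\overline\lambda}(q,t^{-1})]$ vanishes at $q^k t^N=1$; since the remaining factors in \propref{prop:Snorm} are regular there by the second step, $\Snorm_{\overline\lambda\,\overline\mu}=0$. If instead $l(\overline\mu^t)>k$, use the Koornwinder-Macdonald reciprocity \cite[VI(6.6)]{Mac} (stated just after \eqref{eq:Snorm}) to interchange the two partitions: $\Snorm_{\overline\lambda\,\overline\mu}=\Snorm_{\overline\mu\,\overline\lambda}$, then apply the preceding argument to $\overline\mu$ in place of $\overline\lambda$. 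The only mildly subtle point, and what I would regard as the main (though minor) obstacle, is the appeal to the symmetry in order to handle the $\mu$-side: without it one would have to exhibit the vanishing of $\tilde H_{\overline\mu}[1-t^N A_{\overline\lambda}(q,t^{-1});q,t^{-1}]$ at $q^k t^N=1$ when $l(\overline\mu^t)>k$ directly, which looks harder; the reciprocity sidesteps this issue cleanly.
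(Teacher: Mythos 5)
Your proof is correct and follows essentially the same route as the paper's: reduce to $\overline\lambda$, $\overline\mu$ via \eqref{eq:trunc}, read off regularity and the zero from \propref{prop:Snorm} using Lemmas~\ref{lem:vanish} and \ref{lem:nonvanish}(2) (the latter ruling out both poles from $c_{\overline\lambda}$ and cancellation of the zero of $\Omega[t^NB_{\overline\lambda}(q,t^{-1})]^{-1}$), and handle the $\mu$-side by the symmetry $\Snorm_{\lambda\mu}=\Snorm_{\mu\lambda}$. The only cosmetic difference is that the paper invokes the symmetry at the outset rather than at the end.
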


\begin{proof}
  By the symmetry $\lambda\leftrightarrow \mu$, it is enough to study
  $c_\lambda(q,t)\Omega[t^{N}B_\lambda(q,t^{-1})]$.

 Since we have
\(
   \Snorm_{\lambda\mu}(q,t) = 
   \Snorm_{\overline{\lambda}\mu}(q,t)
\)
by \eqref{eq:trunc}, we may assume $l(\lambda) < N$.

By \lemref{lem:vanish}, $\Omega[t^NB_\lambda(q,t^{-1})]^{-1}$ vanishes
unless $l(\lambda^t)\le k$. Its zero may cancel with the zero of
$c_\lambda(q,t)$, but this cannot happen by \lemref{lem:nonvanish}(2),
as we have assumed $\lambda = \overline\lambda$.
Therefore the both assertions follow.
\end{proof}

We give the duality between $q\leftrightarrow t$. It is related to the
level-rank duality for the affine Lie algebra. This will not be used
later.

We consider $\Snorm_{\lambda^t\mu^t}(t^{-1},q^{-1})$ defined by
replacing $k$ and $N$. The specialization at $q^k t^N = 1$ is defined
as $t = v^{-k}\omega_1$, $q = v^N$ as before. We understand $t^{1/k}$
appearing in $\Snorm_{\lambda^t\mu^t}(t^{-1},q^{-1})$ as
$v^{-1}\omega_1'$ where $\omega_1'$ is chosen so that $(\omega_1')^k =
\omega$. (Therefore $(\omega_1')^{Nk/m} = \omega$.)
Furthermore, we need to consider $\sqrt{t}$, $\sqrt{q}$ variables in
$S_{\lambda\mu}(q,t)$, $S_{\lambda^t\mu^t}(t^{-1}, q^{-1})$
respectively. We have $\sqrt{q}^k \sqrt{t}^N = \omega^{m/2} =
-1$.

\begin{Proposition}
  Suppose $q^k t^N = 1$ and consider
  $\Snorm_{\lambda^t\mu^t}(t^{-1},q^{-1})$ as above. Then
  \begin{equation*}
      \Snorm_{\lambda^t\mu^t}(t^{-1},q^{-1})
      = \frac{\Snorm_{\lambda\mu}(q,t)}
      {\langle P_\lambda, P_\lambda\rangle_{q,t}
        \langle P_\mu,P_\mu\rangle_{q,t}}
      \left(\sqrt{\frac{q}t}\right)^{|\lambda|+|\mu|}
      (\omega_1')^{|\lambda||\mu|}.
  \end{equation*}
\end{Proposition}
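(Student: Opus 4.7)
The plan is to apply Proposition~\ref{prop:Snorm} with the substitution $(q,t,N,\lambda,\mu) \to (t^{-1}, q^{-1}, k, \lambda^t, \mu^t)$ and to match the result against $\Snorm_{\lambda\mu}(q,t)$. The first step is to show that the ``main'' factor $\tilde H_\mu[1 - t^N A_\lambda(q,t^{-1}); q, t^{-1}] / \Omega[t^N B_\lambda(q,t^{-1})]$ is invariant under this substitution at $q^k t^N = 1$. The ingredients are: the elementary transpose identity $B_{\lambda^t}(t^{-1},q) = B_\lambda(q,t^{-1})$ (immediate from the definition), which yields $A_{\lambda^t}(t^{-1},q) = A_\lambda(q,t^{-1})$; the specialization $(q^{-1})^k = t^N$, which turns the arguments of both $\tilde H$ and $\Omega$ back into their originals; and the symmetry $\tilde H_{\mu^t}(x; t^{-1}, q) = \tilde H_\mu(x; q, t^{-1})$ from \eqref{eq:transpose} for the $\tilde H$ factor itself.

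Next I would handle the $c$-factors. Under the substitution, $c_\lambda(q,t) c_\mu(q,t)$ becomes $c_{\lambda^t}(t^{-1}, q^{-1}) c_{\mu^t}(t^{-1}, q^{-1}) = c'_\lambda(q^{-1},t^{-1}) c'_\mu(q^{-1},t^{-1})$, using the transpose identity $c_{\lambda^t}(t,q) = c'_\lambda(q,t)$ (immediate from the fact that arm and leg swap under transpose). A direct manipulation of each factor via $1 - q^{-a(s)-1} t^{-l(s)} = -q^{-a(s)-1} t^{-l(s)}(1 - q^{a(s)+1} t^{l(s)})$ gives $c'_\lambda(q^{-1},t^{-1}) = (-1)^{|\lambda|} q^{-n(\lambda^t)-|\lambda|} t^{-n(\lambda)} c'_\lambda(q,t)$. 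Combining this with $\langle P_\lambda, P_\lambda\rangle_{q,t} = c'_\lambda(q,t)/c_\lambda(q,t)$ produces the two $\langle P,P\rangle$ factors on the right-hand side, leaving a monomial correction in $q$ and $t$.

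Finally I would verify the resulting monomial identity. After the $n(\lambda^t)$ and $n(\mu^t)$ contributions cancel, the task reduces to comparing powers of $v$, $\omega_1'$, $\sqrt{q}$, and $\sqrt{t}$ using $q = v^N$, $t = v^{-k}\omega_1$, and the prescribed $t^{1/k} = v^{-1}\omega_1'$. The factor $q^{-|\lambda||\mu|/N} = v^{-|\lambda||\mu|}$ cancels the $v^{-|\lambda||\mu|}$ produced by $(t^{-1})^{-|\lambda||\mu|/k}$, and the $(\omega_1')^{|\lambda||\mu|}$ arising from the same substitution matches the prefactor on the right-hand side; everything then collapses to the identity $[(-1)\cdot \sqrt{q}^k \sqrt{t}^N]^{|\lambda|+|\mu|} = 1$, which holds because $\sqrt{q}^k \sqrt{t}^N = -1$ by our choice of roots. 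The main obstacle, by far, is the careful bookkeeping of fractional exponents and of the two roots of unity $\omega_1$ and $\omega_1'$ governing the specialization and its dual; once the conventions are pinned down the verification is a single line.
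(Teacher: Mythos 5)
Your proposal is correct and follows essentially the same route as the paper: substitute $(q,t,N,\lambda,\mu)\to(t^{-1},q^{-1},k,\lambda^t,\mu^t)$ into \propref{prop:Snorm}, use \eqref{eq:transpose} together with $B_{\lambda^t}(t,q)=B_\lambda(q,t)$ and $q^{-k}=t^N$ to fix the main factor, convert $c_{\lambda^t}(t^{-1},q^{-1})=c'_\lambda(q^{-1},t^{-1})$ into $c'_\lambda(q,t)$ times a monomial, and close with $\langle P_\lambda,P_\lambda\rangle_{q,t}=c'_\lambda(q,t)/c_\lambda(q,t)$ and $\sqrt{q}^k\sqrt{t}^N=-1$. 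The exponent bookkeeping you describe checks out.
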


\begin{proof}
The left hand side is
\begin{equation*}
  \Snorm_{\lambda^t\mu^t}(t^{-1},q^{-1})
  = t^{\nicefrac{|\lambda||\mu|}k}
  \frac{q^{-n(\lambda^t)-n(\mu^t) + (k-1)\nicefrac{(|\lambda^t|+|\mu^t|)}2}}
  {c_{\lambda^t}(t^{-1},q^{-1}) c_{\mu^t}(t^{-1},q^{-1})}
  \frac{\tilde H_{\mu^t}[1 - q^{-k} A_{\lambda^t}(t^{-1},q);t^{-1},q]}
  {\Omega[q^{-k} B_{\lambda^t}(t^{-1},q)]}.
\end{equation*}
By \eqref{eq:transpose} and some identities in \secref{sec:part}, we get
\begin{equation*}
  \frac{\tilde H_{\mu^t}[1 - q^{-k} A_{\lambda^t}(t^{-1},q);t^{-1},q]}
  {\Omega[q^{-k} B_{\lambda^t}(t^{-1},q)]}
  =   \frac{\tilde H_\mu[1 - t^{N}A_\lambda(q,t^{-1});q,t^{-1}]}
    {\Omega[t^{N}B_\lambda(q,t^{-1})]},
\end{equation*}
and
\begin{equation*}
  \frac{q^{-n(\lambda^t)-n(\mu^t) + (k-1)\nicefrac{(|\lambda^t|+|\mu^t|)}2}}
  {c_{\lambda^t}(t^{-1},q^{-1}) c_{\mu^t}(t^{-1},q^{-1})}
  = \frac{q^{-n(\lambda^t)-n(\mu^t) + (k-1)\nicefrac{(|\lambda^t|+|\mu^t|)}2}}
  {c'_\lambda(q^{-1},t^{-1}) c'_\mu(q^{-1},t^{-1})}
  = \frac{(-1)^{|\lambda|+|\mu|}
    t^{n(\lambda)+n(\mu)}
    q^{(k+1)\nicefrac{(|\lambda^t|+|\mu^t|)}2}}
  {c'_\lambda(q,t) c'_\mu(q,t)}.
\end{equation*}
The assertion follows from 
\(
  \langle P_\lambda,P_\mu\rangle_{q,t} = \delta_{\lambda\mu}
  \nicefrac{c_\lambda'(q,t)}{c_\lambda(q,t)}.
\)
\end{proof}

We now introduce the normalized $T$-matrix as
\begin{equation*}
    \Tnorm_{\lambda\mu}(q,t) = \delta_{\lambda,\mu}
    q^{\|\lambda\|^2/2-\nicefrac{|\lambda|^2}{2N}} t^{-\|\lambda^t\|^2/2 + N|\lambda|/2}.
\end{equation*}
See \cite[(5.13)]{AS}.
Note that we have
\begin{equation}\label{eq:trunc2}
  \Tnorm_{\lambda\mu}(q,t) = \Tnorm_{\lambda\overline{\mu}}(q,t),\quad
  \Tnorm_{\lambda\mu}(q,t) = \Tnorm_{\overline{\lambda}{\mu}}(q,t).
\end{equation}
\begin{NB}
We have
\begin{equation*}
  \begin{split}
  \|{\lambda}^t\|^2 &= 2n({\lambda}) + |{\lambda}|
  = 2n(\overline{\lambda}) + |\overline{\lambda}|
  + N(|\lambda|-|\overline{\lambda}|)
  =   \|\overline{\lambda}^t\|^2   + N(|\lambda|-|\overline{\lambda}|),
\\
  \|{\lambda}\|^2 &= 2n({\lambda}^t) + |{\lambda}|
  = 2n(\overline{\lambda}^t) 
  + 2|\overline{\lambda}|(\lambda_1-\overline{\lambda}_1)
  + N(\lambda_1-\overline{\lambda}_1)(\lambda_1-\overline{\lambda}_1-1)
  + |\overline{\lambda}| + N(\lambda_1-\overline{\lambda}_1)
\\
  & = \|\overline{\lambda}\|^2
  + \frac1N\{|\overline{\lambda}| + N(\lambda_1 - \overline{\lambda}_1)\}^2
  - \frac1N|\overline{\lambda}|^2
  = \|\overline{\lambda}\|^2
  + \frac1N|{\lambda}|^2
  - \frac1N|\overline{\lambda}|^2.
  \end{split}
\end{equation*}
\end{NB}

In \cite[\S5]{AS}, $\Snorm_{\lambda\mu}(q,t)$ appears as
\begin{equation*}
  \Snorm_{\lambda\mu}(q,t) = \langle P_\lambda(x_1,\dots,x_N;q,t)
  |\frac{S}{S_{\emptyset\emptyset}}| 
  P_\mu(x_1,\dots,x_N;q,t)\rangle''_N,
\end{equation*}
where $S$ is the $S$-matrix, and $S_{\emptyset\emptyset}$ is a certain
explicit function, which can be identified with refined Chern-Simons
partition function for $S^3$.
Then the linear operator corresponding to
$\nicefrac{S}{S_{\emptyset\emptyset}}$ is given by
\begin{equation*}
  \begin{split}
   & S^{\lambda}_{\;\mu}(q,t)
\\
  =\; & \frac{\Snorm_{\lambda\mu}(q,t)}
  {\langle P_\lambda, P_\lambda\rangle''_N}
  = q^{-\nicefrac{|\lambda||\mu|}N}
  \frac{t^{n(\lambda)+n(\mu)-\nicefrac{(N-1)(|\lambda|+|\mu|)}2}}
    {c'_\lambda(q,t) c_\mu(q,t)}
  \frac{\tilde H_\mu[1 - t^{N}A_\lambda(q,t^{-1});q,t^{-1}]}
    {\Omega[t^{N-1}q B_\lambda(q,t^{-1})]}
\\
  =\; & q^{-\nicefrac{|\lambda||\mu|}N}
  \frac{t^{n(\lambda)+n(\mu)-\nicefrac{(N-1)(|\lambda|+|\mu|)}2}}
    {c'_\lambda(q,t) c_\mu(q,t)}
    \frac{\Omega[t^N B_\lambda(q,t^{-1})]}{\Omega[t^{N-1}q B_\lambda(q,t^{-1})]}
  \frac{\tilde H_\lambda[1 - t^{N}A_\mu(q,t^{-1});q,t^{-1}]}
    {\Omega[t^N B_\mu(q,t^{-1})]},
  \end{split}
\end{equation*}
where the last expression follows from $\Snorm_{\lambda\mu}(q,t) =
\Snorm_{\mu\lambda}(q,t)$.
When we compute a multiplication of $\Snorm$, we must use
$S^{\lambda}_{\;\mu}(q,t)$. More precisely we consider
$(S^{\lambda}_{\;\mu})$ as a matrix whose indices are partitions
$\lambda$, $\mu$ with $l(\lambda)$, $l(\mu) \le N-1$ and
$l(\lambda^t)$, $l(\mu^t) \le k$.
The same is true for $\Tnorm$.

\begin{NB}
Then
\begin{equation*}
  \frac{S}{S_{00}} P_\mu(x_1,\dots,x_N;q,t) = \sum_{l(\lambda)\le N}
  \frac{\Snorm_{\lambda\mu}(q,t)}{\langle P_\lambda,P_\lambda\rangle''}
  P_\lambda(x_1,\dots,x_N;q,t)
  = \sum_{l(\lambda)\le N} S^{\lambda}_{\;\mu}(q,t) P_\lambda(x_1,\dots,x_N;q,t).
\end{equation*}
\end{NB}%

\begin{NB}
  We define $\tilde{S}_{\lambda\mu}$ by
  \begin{equation*}
    \tilde{S}_{\lambda\mu}(q,t) = \Snorm_{\lambda\mu}(q,t)
    \frac{\langle P_\lambda,P_\lambda\rangle_{q,t}}
    {\langle P_\lambda,P_\lambda\rangle''_{N}}
    = \frac{t^{n(\lambda)+n(\mu) - \nicefrac{(N-1)(|\lambda|+|\mu|)}2}}
  {c_\lambda(q,t) c_\mu(q,t)}
  \frac{\tilde H_\mu[1 - t^{N}A_\lambda(q,t^{-1});q,t^{-1}]}
    {\Omega[t^{N-1}qB_\lambda(q,t^{-1})]}.
  \end{equation*}
Then we have
\begin{equation*}
  \langle P_\lambda| \frac{S}{S_{\emptyset\emptyset}}| P_\mu\rangle_{q,t}
  = \tilde{S}_{\lambda\mu}.
\end{equation*}
Note that $\tilde{S}_{\lambda\mu}$ is no longer symmetric under
$\lambda\leftrightarrow \mu$.
\end{NB}

We define
\begin{equation}\label{eq:kernelfin}
  Z(x,y;q,t) \defeq
  \sum_{\lambda,\mu} \frac{\Snorm_{\lambda\mu}(q,t)}
  {\langle P_\lambda,P_\lambda\rangle''_N
  \langle P_\mu, P_\mu\rangle''_N}
  P_\lambda(x_1,\dots,x_N;q,t) P_\mu(y_1,\dots,y_N;q,t),
\end{equation}
where the summation is over partitions with $l(\lambda)$, $l(\mu) \le
N-1$. We have
\begin{equation*}
  \langle Z(x,y;q,t), f(y)\rangle''_N = 
  \left(\frac{S}{S_{\emptyset\emptyset}} f\right)(x),
\end{equation*}
where the inner product is taken for the variable $y$.  This means
that $Z(x,y;q,t)$ is the kernel function of the operator
$\nicefrac{S}{S_{\emptyset\emptyset}}$.
\begin{NB}
  It is enough to check when $f = P_\mu$. Then it becomes obvious.
\end{NB}%

\begin{Remarks}\label{rem:sum}
(1)
Note that $S^\lambda_{\;\mu}(q,t)$ is always well-defined at $q^k t^N
= 1$ if $l(\lambda)\le N-1$, and we have
$\left.S^\lambda_{\;\mu}(q,t)\right|_{q^k t^N = 1} = 0$ if $l(\mu^t) > k$.
However $\left.S^\lambda_{\;\mu}(q,t)\right|_{q^k t^N = 1}$ may not vanish
even if $l(\lambda^t) > k$.
This does not cause any trouble, as we eventually compute $\langle
\emptyset | \text{a monomial in $S$, $T$,
  $N_{\lambda\bullet}^\bullet$} | \emptyset\rangle$. The answer
remains the same whether imposing $l(\lambda^t), l(\mu^t)\le k$ or
not.
\begin{NB}
  Let $K_{N,k}$ be the linear span of $\{ P_\lambda \mid l(\lambda^t)
  > k\}$. Then the matrix $S|_{q^kt^N=1}$ is zero on $K_{N,k}$.
\end{NB}%
This observation says that the refined Chern-Simons partition function
is a rational function in $q$, $t$ and $t^N$ for a fixed $N$.

Similarly the sum for the kernel function $Z(x,y;q,t)$ in
\eqref{eq:kernelfin} is infinite for generic $q$, $k$, but is a
rational function in $q$, $t$ if we take a part with given degrees in
$x$ and $y$. It gives the operator $\nicefrac{S}{S_{\emptyset\emptyset}}$
at $q^k t^N = 1$.

(2) Let us also consider what happens when we consider $N$ (or $t^N$)
as a variable.
Note that $S^\lambda_{\;\mu}(q,t) = 0$ if $l(\mu) > N$. This is
obvious. For the case $l(\lambda) > N$, we use the second or last
expression for the above $S^\lambda_{\;\mu}(q,t)$ to see that it is
well-defined even if $l(\lambda) > N$, though it may not be $0$. Thus
the situation is similar to (1) above.

However there is a difference: we impose the condition $l(\lambda)$,
$l(\mu)\le N-1$, rather than $\le N$. In fact, when we only assume
$l(\lambda)$, $l(\mu)\le N$, we cannot multiply the matrices
$(S^{\lambda}_{\;\mu})$, $(T^{\lambda}_{\;\mu})$ as infinitely many
entries are nonzero. As entries with $l(\lambda)$ or $l(\mu) = N$ can
be determined by entries with $<N$ thanks to relations
(\ref{eq:trunc}, \ref{eq:trunc2}), the author hope that this is not a
serious issue, even though he does not know a resolution at this moment.
\end{Remarks}

\begin{NB}
For example, let us compute
\begin{equation*}
  \langle P_\lambda(x_1,\dots,x_N;q,t)
  |(\frac{S}{S_{\emptyset\emptyset}})^2| 
  P_\nu(x_1,\dots,x_N;q,t)\rangle''
  = \sum_\mu \frac{\Snorm_{\lambda\mu}(q,t)\Snorm_{\mu\nu}(q,t)}
  {\langle P_\mu,P_\mu\rangle''_N}
\end{equation*}

Recall $\Snorm_{\lambda\mu}(q,t) =
\Snorm_{\mu\lambda}(q,t)$. Therefore this is equal to
\begin{equation*}
  \sum_\mu 
  \frac{\Snorm_{\mu\lambda}(q,t)\Snorm_{\mu\nu}(q,t)}
  {\langle P_\mu,P_\mu\rangle''_N}
  = \sum_\mu
  \begin{aligned}[t]
  & q^{-\nicefrac{(|\lambda|+|\nu|)|\mu|}N}
  \frac{t^{n(\lambda)+2n(\mu)+n(\nu) - \nicefrac{(N-1)(|\lambda|+2|\mu|+|\nu|)}2}}
  {c_\lambda(q,t) c_\mu(q,t) c_\mu'(q,t) c_\nu(q,t)}
  \\
  & \quad\times
  \frac{\tilde H_\lambda[1 - t^{N}A_\mu(q,t^{-1});q,t^{-1}]}
    {\Omega[t^{N-1}q B_\mu(q,t^{-1})]}
  \frac{\tilde H_\nu[1 - t^{N}A_\mu(q,t^{-1});q,t^{-1}]}
    {\Omega[t^{N}B_\mu(q,t^{-1})]}.
  \end{aligned}
\end{equation*}
In order to match with computations in \subsecref{subsec:Hilb}, we
replace $t$ by $t^{-1}$ to have
\begin{equation*}
  \begin{split}
  & \sum_\mu \frac{\Snorm_{\mu\lambda}(q,t^{-1})\Snorm_{\mu\nu}(q,t^{-1})}
  {\left.\langle P_\mu,P_\mu\rangle''_N\right|_{t\leftrightarrow t^{-1}}}
\\
  = \; & \sum_\mu
  q^{-\nicefrac{(|\lambda|+|\nu|)|\mu|}N}
  \frac{t^{-n(\lambda)-2n(\mu)-n(\nu) + \nicefrac{(N-1)(|\lambda|+2|\mu|+|\nu|)}2}}
  {c_\lambda(q,t^{-1}) c_\mu(q,t^{-1}) c_\mu'(q,t^{-1}) c_\nu(q,t^{-1})}
  \frac{\tilde H_\lambda[1 - t^{-N}A_\mu(q,t);q,t]}
    {\Omega[t^{1-N}q B_\mu(q,t)]}
  \frac{\tilde H_\nu[1 - t^{-N}A_\mu(q,t);q,t]}
    {\Omega[t^{-N}B_\mu(q,t)]}.
\\
  =\; &
  \frac{t^{-n(\lambda)-n(\nu) + \nicefrac{(N-1)(|\lambda|+|\nu|)}2}}
  {c_\lambda(q,t^{-1}) c_\nu(q,t^{-1})}
  \sum_\mu
  q^{-\nicefrac{(|\lambda|+|\nu|)|\mu|}N}
  \frac{(-1)^{|\mu|} q^{-n(\mu^t)}t^{-n(\mu)}t^{(N-1)|\mu|}}
    {c'_\mu(q,t^{-1}) c_\mu(q^{-1},t)}
  \frac{\tilde H_\lambda[1 - t^{-N}A_\mu(q,t);q,t]}
    {\Omega[t^{1-N}q B_\mu(q,t)]}
  \frac{\tilde H_\nu[1 - t^{-N}A_\mu(q,t);q,t]}
    {\Omega[t^{-N}B_\mu(q,t)]}.
  \end{split}
\end{equation*}
The last term $\sum_\mu$ can be written in terms of the Hilbert
schemes of points, if we ignore the fact that the partition $\mu$ is
supposed to be contained in the rectangle of size $(N-1)\times k$.

Remark that ${\Omega[t^{1-N}q B_\mu(q,t)]}^{-1}$ and
${\Omega[t^{-N}B_\mu(q,t)]}^{-1}$ are well-defined class even we
substitute $u=t^{-N}$ as they are exterior powers of {\it vector
  bundles}. On the other hand, $t^{N|\mu|}$ needs care, as the sum may diverge.

Let $\overline\mu\subset (N-1)\times k$ denotes the condition that the
corresponding Young diagram is contained in the rectangle of size
$(N-1)\times k$. Then \eqref{eq:trunc} implies that we formally have
\begin{equation*}
  \begin{split}
    & \sum_\mu
    \frac{\Snorm_{\mu\lambda}(q,t^{-1})\Snorm_{\mu\nu}(q,t^{-1})}
    {\left.\langle P_\mu,P_\mu\rangle''_N\right|_{t\leftrightarrow
        t^{-1}}}
    \\
    =\; &
    (1 + 1 + 1 + \cdots)
    \sum_{\overline\mu\subset (N-1)\times k}
    \frac{\Snorm_{\overline\mu\lambda}(q,t^{-1})\Snorm_{\overline\mu\nu}(q,t^{-1})}
    {\left.\langle
        P_{\overline\mu},P_{\overline\mu}\rangle''_N\right|_{t\leftrightarrow
        t^{-1}}}.
  \end{split}
\end{equation*}
It is because \lemref{lem:reduction} implies that Young diagram
$\mu$ with $\Snorm_{\mu\lambda}(q,t)\neq 0$ are those come from
with $\overline{\mu}\subset (N-1)\times k$.
This is a good sign, but how we treat $1 + 1 + \cdots$ ?

We want to have a modified definition of $\Snorm_{\mu\lambda}(q,t)$,
say ${}'\Snorm_{\mu\lambda}(q,t)$, which is
\begin{equation*}
  {}'\Snorm_{\mu\lambda}(q,t)
  = (1 + 1 + 1 + \cdots)^{-1/2} \Snorm_{\mu\lambda}(q,t)
\end{equation*}
so that
\begin{equation*}
  \sum_\mu
    \frac{{}'\Snorm_{\mu\lambda}(q,t^{-1}){}'\Snorm_{\mu\nu}(q,t^{-1})}
    {\left.\langle P_\mu,P_\mu\rangle''_N\right|_{t\leftrightarrow
        t^{-1}}}
    = \sum_{\overline\mu\subset (N-1)\times k}
    \frac{\Snorm_{\overline\mu\lambda}(q,t^{-1})\Snorm_{\overline\mu\nu}(q,t^{-1})}
    {\left.\langle
        P_{\overline\mu},P_{\overline\mu}\rangle''_N\right|_{t\leftrightarrow
        t^{-1}}}.
\end{equation*}
Note also that this already happen at fixing $N$, before imposing
$q^kt^N = 1$.
\end{NB}%

\begin{NB}
We introduce a new variable $\bt$ and consider
\begin{equation*}
 \bt^{\nicefrac{|\lambda|+|\mu|}2}S^{\lambda}_{\;\mu}(q,t).
\end{equation*}
Then
\begin{equation*}
  \bt^{\nicefrac{|\lambda|+|\mu|}2}S^{\lambda}_{\;\mu}(q,t)  
  = \bt^{\nicefrac{|\lambda|-|\overline{\lambda}|}2}
  \bt^{\nicefrac{|\overline{\lambda}|+|\mu|}2}
  S^{\overline{\lambda}}_{\;\mu}(q,t).
\end{equation*}
Therefore
\begin{equation*}
  \sum_{l(\mu)\le N}
  \bt^{\nicefrac{|\lambda|+|\mu|}2} S^{\lambda}_{\;\mu}(q,t)
  \bt^{\nicefrac{|\mu|+|\nu|}2}S^{\mu}_{\;\nu}(q,t)
  = \frac1{1-\bt^N}
  \sum_{l(\overline{\mu}) < N}
  \bt^{\nicefrac{|\lambda|+|\overline\mu|}2}
  S^{\lambda}_{\;\overline\mu}(q,t) 
  \bt^{\nicefrac{|\overline\mu|+|\nu|}2}
  S^{\overline\mu}_{\;\nu}(q,t).
\end{equation*}
However I do not know how to deal with $\bt^N$.
\end{NB}

\subsection{A stable version}

Replacing $t^N$ by $u$, we define a stable version of $\Snorm$:
\begin{equation*}
  \Snorm_{\lambda\mu}(q,t,u,v) \defeq
  v^{-|\lambda||\mu|}
  \frac{t^{\nicefrac{(\|\lambda\|^2 + \|\mu\|^2)}2} u^{-\nicefrac{(|\lambda|+|\mu|)}2}}
  {c_\lambda(q,t) c_\mu(q,t)}
  \frac{\tilde H_\mu[1 - uA_\lambda(q,t^{-1});q,t^{-1}]}
    {\Omega[uB_\lambda(q,t^{-1})]}.
\end{equation*}
Another variable $v$ will be identified with $q^{1/N}$ at the
specialization $q^kt^N = 1$ as in \subsecref{subsec:spec2}, but is an
independent variable at this moment. It is expected that $q^{1/N}$
appears only as the framing factor (see calculation in \cite[\S7]{AS}).

For example, we have
\begin{equation*}
  \Snorm_{\square\emptyset}(q,t,u,v)
  \begin{NB}
    = \frac{t^{1/2} u^{-1/2}}{1 - t} (1 - u)
  \end{NB}%
  = \frac{u^{1/2} - u^{-1/2}}{t^{1/2} - t^{-1/2}}.
\end{equation*}

\begin{NB}
I am not sure whether my understanding of the factor
$q^{-\nicefrac{|\lambda||\mu|}N}$ is correct at this moment.
\end{NB}%

\begin{NB}
Note
\(
   B_\mu(q,t) = (1 - A_\mu(q,t))/(1-t)(1-q).
\)
Therefore
\begin{equation*}
  \Omega[-u B_\mu(q,t)]
  = \Omega\left[
    \frac{-u + uA_\mu(q,t)}{(1-q)(1-t)}
  \right]
  = \Omega\left[
    \frac{-u}{(1-q)(1-t)}
  \right]
  \Omega\left[
   \frac{uA_\mu(q,t)}{(1-q)(1-t)}
   \right].
  \end{equation*}
  \begin{NB2}
    I need to understand where this equality holds.

    Since $\calE/[\C^2] = \frac1{(1-q)(1-t)} \shfO_{X^{[n]}} -
    \shfO^{[n]}$, we have
    \begin{equation*}
      \Wedge_{-u} \shfO^{[n]}
      = \Wedge_{-u} (\frac1{(1-q)(1-t)} \shfO_{X^{[n]}})
      \otimes S_u (\calE/[\C^2]).
    \end{equation*}
  \end{NB2}%
  Since $A_\mu/(1-q)(1-t)$ corresponds to the universal sheaf
  $\calE/[\C^2]$, it produces only a {\it polynomial\/} in $u$. So the
  problem for sum over all partitions $\lambda$ comes from the factor
  $\Omega\left[ \frac{-u}{(1-t)(1-q)} \right]$. 

  However the roles of $u$ in (a) terms $\Omega\left[
    \frac{-u}{(1-q)(1-t)} \right]$ and $\Omega\left[
    \frac{uA_\mu(q,t)}{(1-q)(1-t)} \right]$ and (b)
  $u^{-\nicefrac{(|\lambda|+|\mu|)}2}$ are very different. In (a)
  expressions are formal power series in $u$. On the other hand, if we
  sum over $\lambda$ or $\mu$ in (b), we get formal power series in
  $u^{-1}$. So we must be careful when we multiply them.

  Note also that we lose vanishing properties in \lemref{lem:vanish} in 
  $\Omega\left[
    \frac{uA_\mu(q,t)}{(1-q)(1-t)} \right]$
\end{NB}

\begin{NB}
  A `stable version' of $S^\lambda_\mu$ is
  \begin{equation*}
    Z^\lambda_\mu(q,t)
    = \frac{t^{n(\lambda)+n(\mu)}(tu^{-1})^{\nicefrac{(|\lambda|+|\mu|)}2}}
    {c'_\lambda(q,t) c_\mu(q,t)}
    \frac{\Omega[u B_\lambda(q,t^{-1})]}
    {\Omega[\nicefrac{u q}t B_\lambda(q,t^{-1})]}
  \frac{\tilde H_\lambda[1 - uA_\mu(q,t^{-1});q,t^{-1}]}
    {\Omega[u B_\mu(q,t^{-1})]}.
  \end{equation*}
Or, equivalently we use a `stable version' of the inner product
\begin{equation*}
  \langle P_\lambda, P_\lambda\rangle''
  = \langle P_\lambda, P_\lambda\rangle_{q,t}
  \frac{\Omega[\nicefrac{u q}tB_\lambda(q,t^{-1})]}
  {\Omega[u B_\lambda(q,t^{-1})]}.
\end{equation*}
When we multiply $Z$, the $\Omega[u B_\lambda(q,t^{-1})]$ in the
numerator cancels with the next $\Omega[u B_\lambda(q,t^{-1})]$ in the
denominator. Therefore the multiplication is possible in $\Q(q,t)[u,
u^{-1}]]$.
\end{NB}

We consider a stable version of the kernel function in
\eqref{eq:kernelfin}:
\begin{equation*}
  \begin{NB}
  Z(x,y;q,t,u,v) \defeq
  \end{NB}
  \sum_{\lambda,\mu}
  \frac{\Snorm_{\lambda\mu}(q,t,u,v)}{\langle P_\lambda,P_\lambda\rangle_{q,t}
  \langle P_\mu, P_\mu\rangle_{q,t}}
  P_\lambda(x) P_\mu(y).
\end{equation*}
We do not impose any constraint for the range of $\lambda$, $\mu$. So
this should be considered as an element in the completion with respect
to degrees in $x$, $y$.

Note that this does not give the finite kernel function
\eqref{eq:kernelfin} at $u=t^N$, $q^k t^N = 1$. First because the
inner product is different. Second because the summation is {\it
  not\/} over $l(\lambda)$, $l(\mu)\le N-1$. It is over $\le N$.

The first difference can be resolved by modifying the kernel function
as
\begin{equation}\label{eq:kernelmod}
  \sum_{\lambda,\mu}
  \frac{\Snorm_{\lambda\mu}(q,t,u,v)}{\langle P_\lambda,P_\lambda\rangle_{q,t}
  \langle P_\mu, P_\mu\rangle_{q,t}}
  \frac{\Omega[u B_\lambda(q,t^{-1})]}
  {\Omega[\nicefrac{u q}tB_\lambda(q,t^{-1})]}
  \frac{\Omega[u B_\mu(q,t^{-1})]}
  {\Omega[\nicefrac{u q}tB_\mu(q,t^{-1})]}
  P_\lambda(x) P_\mu(y).
\end{equation}
But the author does not know how to solve the second difference as
mentioned in \remsref{rem:sum}(2).

We change the inner product from $\langle\ ,\ \rangle_{q,t}$ to
$\langle\ ,\rangle_*$ by \eqref{eq:rel} to get
\begin{equation}\label{eq:kernel}
  \begin{split}
    & \left\langle f(x) \middle|
      \sum_{\lambda,\mu}
  \frac{\Snorm_{\lambda\mu}(q,t,u,v)}{\langle P_\lambda,P_\lambda\rangle_{q,t}
  \langle P_\mu, P_\mu\rangle_{q,t}}
  P_\lambda(x) P_\mu(y)
      \middle| g(y)
      \right\rangle_{q,t}
\\
=\; &
  (-t)^{-\deg f-\deg g}
  \left\langle f\left.\left.[\frac{X}{1-t^{-1}}] \middle| 
\sum_{\lambda,\mu}
  \frac{\Snorm_{\lambda\mu}(q,t,u,v)}{\langle P_\lambda,P_\lambda\rangle_{q,t}
  \langle P_\mu, P_\mu\rangle_{q,t}}
  P_\lambda[\frac{X}{1-t}] P_\mu[\frac{Y}{1-t}]        
  \middle| g[\frac{Y}{1-t^{-1}}\right.\right.]\right\rangle_{*}
  \end{split}
\end{equation}
\begin{NB}
\begin{equation*}
  \langle f\left.\left.[\frac{X}{1-t}] \middle| 
      \omega^X \omega^Y Z[\frac{X}{1-t},\frac{Y}{1-t};q,t,u,v] 
  \middle| g[\frac{X}{1-t}\right.\right.]\rangle_{x,y,*,*}
\end{equation*}
\end{NB}%
by \eqref{eq:rel}. Here the inner products are taken for both $x$ and
$y$ simultaneously.
We have
\begin{equation*}
  \begin{split}
   \begin{NB}
     Z[\frac{X}{1-t},\frac{Y}{1-t};q,t,u,v] =
   \end{NB}
   &   \sum_{\lambda,\mu}
  \frac{\Snorm_{\lambda\mu}(q,t,u,v)}{\langle P_\lambda,P_\lambda\rangle_{q,t}
  \langle P_\mu, P_\mu\rangle_{q,t}}
  P_\lambda[\frac{X}{1-t};q,t] P_\mu[\frac{Y}{1-t};q,t]
\\
  =&    \sum_{\lambda,\mu}
  \frac{\Snorm_{\lambda\mu}(q,t,u,v) t^{n(\lambda) + n(\mu)}}
  {c_\lambda'(q,t) c_\mu'(q,t)}
  \tilde H_\lambda(x;q,t^{-1}) \tilde H_\mu(y;q,t^{-1}).
  \end{split}
\end{equation*}
\begin{NB}
  We used \eqref{eq:inverse} and
  \begin{equation*}
    \langle P_\lambda, P_\lambda\rangle_{q,t}
    = \frac{c_\lambda'(q,t)}{c_\lambda(q,t)},
  \end{equation*}
  and ignore the difference between $\langle\ ,\ \rangle''$ and
  $\langle\ ,\ \rangle_{q,t}$.
\end{NB}
In order to match with computations in
\subsecref{subsec:Hilb}, we replace $t$ by $t^{-1}$. 
Then the above becomes
\begin{equation*}\label{eq:Zkernel}
  \begin{NB}
      Z[\frac{X}{1-t^{-1}},\frac{Y}{1-t^{-1}};q,t^{-1},u,v] 
    =     
  \end{NB}%
  \sum_{\lambda,\mu}
  \frac{\Snorm_{\lambda\mu}(q,t^{-1},u,v) t^{-n(\lambda) - n(\mu)}}
  {c_\lambda'(q,t^{-1}) c_\mu'(q,t^{-1})}
  \tilde H_\lambda(x;q,t) \tilde H_\mu(y;q,t).
\end{equation*}
\begin{NB}
  \begin{equation*}
    \frac1{c'_\lambda(q^{-1},t)}
    = \frac1{c'_\lambda(q,t^{-1})}
     \prod_{s\in\lambda} \frac{1-q^{a(s)+1} t^{-l(s)}}{1 - q^{-(a(s)+1)} t^{l(s)}}
    = \frac1{c'_\lambda(q,t^{-1})} t^{-n(\lambda)} q^{n(\lambda^t)} (-q)^{|\lambda|}.
  \end{equation*}
  \begin{equation*}
    \frac1{c_\lambda(q^{-1},t)}
    = \frac1{c_\lambda(q,t^{-1})} t^{-n(\lambda)} q^{n(\lambda^t)} (-t)^{|\lambda|}.
  \end{equation*}
\end{NB}
We absorb the term $\left.(-t)^{-\deg f -\deg
    g}\right|_{t\leftrightarrow t^{-1}}$ in \eqref{eq:kernel} into the
above as $(-t)^{|\lambda|+|\mu|}$. We then substitute
\propref{prop:Snorm} to get
\begin{NB}
\begin{equation*}
  \begin{split}
    & (-t)^{\deg_x+\deg_y}
    Z[\frac{X}{1-t^{-1}},\frac{Y}{1-t^{-1}};q,t^{-1},u,v] 
    = \sum_{\lambda,\mu}
  \frac{(-t)^{|\lambda|+|\mu|} 
    t^{-2n(\lambda)-2n(\mu) + \nicefrac{(N-1)(|\lambda|+|\mu|)}2}
  }
    {c_\lambda(q,t^{-1}) c_\lambda'(q,t^{-1}) c_\mu(q,t^{-1}) c_\mu'(q,t^{-1})
    }
  \frac{\tilde H_\mu[1 - t^{-N}A_\lambda(q,t);q,t]}
    {\Omega[t^{-N}B_\lambda(q,t)]}
  \tilde H_\lambda(x;q,t) \tilde H_\mu(y;q,t),
\\
  = \; &
  \sum_{\lambda,\mu}
  \frac{
  (tu)^{-\nicefrac{(|\lambda|+|\mu|)}2}}
    {\langle \tilde H_\lambda,\tilde H_\lambda\rangle_*
      \langle \tilde H_\mu,\tilde H_\mu\rangle_*}
  \frac{\tilde H_\mu[1 - u A_\lambda(q,t);q,t]}
    {\Omega[u B_\lambda(q,t)]}
  \tilde H_\lambda(x;q,t) \tilde H_\mu(y;q,t),
  \end{split}
\end{equation*}
\end{NB}%
\begin{equation}\label{eq:Zkernel2}
    \sum_{\lambda,\mu}
  \frac{v^{-|\lambda||\mu|}(tu)^{-\nicefrac{(|\lambda|+|\mu|)}2}}
    {\langle \tilde H_\lambda,\tilde H_\lambda\rangle_*
      \langle \tilde H_\mu,\tilde H_\mu\rangle_*}
  \frac{\tilde H_\mu[1 - u A_\lambda(q,t);q,t]}
    {\Omega[u B_\lambda(q,t)]}
 \tilde H_\lambda(x;q,t) \tilde H_\mu(y;q,t),
\end{equation}
where we have used (\ref{eq:temp}, \ref{eq:Hinner}).

\begin{NB}
*************************************************************************

Here is the record of an older attempt.

We have
\begin{equation*}
  Z(x,y) = \sum_\lambda \frac
  {P_{\lambda}(t^{\nicefrac12},\dots,t^{N-\nicefrac12};q,t)}
  {\langle P_\lambda,P_\lambda\rangle''}
  P_\lambda(x;q,t)
  \Omega[\frac{1-t}{1-q} 
    Y (t^{\nicefrac12}q^{-\lambda_1}+\dots+t^{N-\nicefrac12} q^{-\lambda_N})
   ].
\end{equation*}
Since
\begin{equation*}
  \langle f, g\rangle_{q,t} = 
  \langle f, g[\frac{1-q}{1-t}X]\rangle
  = \langle f, \omega g[\frac1{(1-t)^2}X]\rangle_*
  = \langle f[\frac{X}{1-t}], \omega g[\frac{X}{1-t}]\rangle_*,
\end{equation*}
we have
\begin{equation*}
  \begin{split}
  &\frac{S}{S_{00}} H_\mu(x;q,t)
  = \langle Z(x,y), H_\mu(y;q,t)\rangle''_y
  = \langle Z[x,\frac{Y}{1-t}], \omega H_\mu[\frac{Y}{1-t};q,t]\rangle_{*,y}
  = \langle Z[x,\frac{Y}{(1-t)^2}], \omega H_\mu(y;q,t)\rangle_{*,y}
\\
  =\; &
  \sum_\lambda
  \frac
  {P_{\lambda}(t^{\nicefrac12},\dots,t^{N-\nicefrac12};q,t)}
  {\langle P_\lambda,P_\lambda\rangle''}
  P_\lambda(x;q,t)
  \langle \Omega[\frac1{(1-q)(1-t)} Y
  (t^{\nicefrac12}q^{-\lambda_1}+\dots+t^{N-\nicefrac12} q^{-\lambda_N})]
  , \omega H_\mu(y;q,t)\rangle_{*,y}.
  \end{split}
\end{equation*}
Since $\omega\Omega[XY/(1-q)(1-t)]$ is the reproducing kernel for the inner
product $\langle\ ,\ \rangle_*$, the second term is
\begin{equation*}
  H_\mu[t^{\nicefrac12}q^{-\lambda_1}+\dots+t^{N-\nicefrac12} q^{-\lambda_N};q,t]
  = H_\mu[\frac{t^{N-\nicefrac12}}{1-t^{-1}}(1- t^{-N}A_\lambda(q^{-1},t));q,t].
\end{equation*}
Therefore
\begin{equation*}
  \frac{S}{S_{00}} H_\mu(x;q,t)
  =   \sum_\lambda
  \frac
  {P_{\lambda}(t^{\nicefrac12},\dots,t^{N-\nicefrac12};q,t)}
  {\langle P_\lambda,P_\lambda\rangle''}
  P_\lambda(x;q,t)
  H_\mu[\frac{t^{N-\nicefrac12}}{1-t^{-1}}(1- t^{-N}A_\lambda(q^{-1},t));q,t].
\end{equation*}

It is probably better to consider
\begin{equation*}
  \begin{split}
    & \langle Z(x,y), f(y)\rangle''_y = \langle Z[x,\frac{Y}{1-t}],
    \omega f[\frac{Y}{1-t}]\rangle_*
    \\
    =\; & \sum_{\lambda,\mu} \frac{\Snorm_{\lambda\mu}(q,t)}{\langle
      P_\lambda,P_\lambda\rangle'' \langle P_\mu, P_\mu\rangle''}
    P_\lambda(x;q,t) \langle P_\mu[\frac{Y}{1-t},q,t],
    \omega f[\frac{Y}{1-t}]\rangle_*
\\
    =\; & \sum_{\lambda,\mu} \frac{\Snorm_{\lambda\mu}(q,t)}{\langle
      P_\lambda,P_\lambda\rangle'' \langle P_\mu, P_\mu\rangle'' c_\mu}
    P_\lambda(x;q,t) \langle \tilde H_\mu(y;q,t^{-1}),
    \omega f[\frac{Y}{1-t}]\rangle_*,
  \end{split}
\end{equation*}
and
\begin{equation*}
  \begin{split}
    & \langle g(x), Z(x,y), f(y)\rangle''_{x,y}
  = \langle \omega g[\frac{X}{1-t}] Z[\frac{X}{1-t},\frac{Y}{1-t}]
    \omega f[\frac{Y}{1-t}]\rangle_*
    \\
    =\; & \sum_{\lambda,\mu} \frac{\Snorm_{\lambda\mu}(q,t)}{\langle
      P_\lambda,P_\lambda\rangle'' \langle P_\mu, P_\mu\rangle''}
    P_\lambda(x;q,t) \langle P_\mu[\frac{Y}{1-t},q,t],
    \omega f[\frac{Y}{1-t}]\rangle_*
\\
    =\; & \sum_{\lambda,\mu} \frac{\Snorm_{\lambda\mu}(q,t)}{\langle
      P_\lambda,P_\lambda\rangle'' \langle P_\mu, P_\mu\rangle'' c_\lambda c_\mu}
    \langle \omega g[\frac{X}{1-t}], \tilde H_\lambda(x;q,t^{-1})
    \rangle_*
    \langle \tilde H_\mu(y;q,t^{-1}),
    \omega f[\frac{Y}{1-t}]\rangle_*,
  \end{split}
\end{equation*}

It is probably better to consider
\begin{equation*}
    \frac{S}{S_{00}} H_\mu[(1-t)X;q,t]
    = \sum_\lambda
  \frac
  {P_{\lambda}(t^{\nicefrac12},\dots,t^{N-\nicefrac12};q,t)}
  {\langle P_\lambda,P_\lambda\rangle''}
  P_\lambda(x;q,t)
  H_\mu[-{t^{N+\nicefrac12}}(1- t^{-N}A_\lambda(q^{-1},t));q,t].
\end{equation*}
Then
\begin{equation*}
  \langle H_\lambda[(1-t)X;q,t]
      \frac{S}{S_{00}} H_\mu[(1-t)X;q,t]\rangle_*
  = \sum_\nu
  \frac
  {P_{\nu}(t^{\nicefrac12},\dots,t^{N-\nicefrac12};q,t)}
  {\langle P_\nu,P_\nu\rangle''}
  \langle H_\lambda[(1-t)X;q,t], P_\nu(x;q,t)\rangle_*
  H_\mu[-{t^{N+\nicefrac12}}(1- t^{-N}A_\lambda(q^{-1},t));q,t].      
\end{equation*}
\end{NB}

\begin{NB}
\subsection{A change of the base}

Let us rewrite \eqref{eq:Zkernel2}. Note
\begin{equation*}
  \sum_\mu (-1)^{|\mu|} (t/u)^{|\mu|/2}
  \frac{\tilde H_\mu(z;q,t) \tilde H_\mu(y;q,t)}
  {\langle \tilde H_\mu, \tilde H_\mu\rangle_*}
  = \tilde\Omega[\frac{\epsilon\sqrt{t/u}YZ}{(1-q)(1-t)}]
  = \sum_\mu (-1)^{|\mu|} (t/u)^{|\mu|/2}
  s_\mu[\frac{Z}{1-q}] s_{\mu^t}[\frac{Y}{1-t}].
\end{equation*}
Therefore
\begin{equation*}
   Z[\frac{X}{1-t^{-1}},\frac{Y}{1-t^{-1}};q,t^{-1}] 
   = \sum_{\lambda,\mu} 
    \frac{(-1)^{|\lambda|+|\mu|}
  (t/u)^{\nicefrac{(|\lambda|+|\mu|)}2}}
    {\langle \tilde H_\lambda,\tilde H_\lambda\rangle_*
    \Omega[u B_\lambda(q,t)]}
  s_{\mu}[\frac{1-uA_\lambda(q,t)}{1-q}] 
  \tilde H_\lambda(x;q,t) s_{\mu^t}[\frac{Y}{1-t}].
\end{equation*}
Let
\begin{equation*}
  \tilde s_\mu(x;q,t) \defeq
   \frac1{\Omega[u X]} s_\mu[\frac{1-u(1-(1-q)(1-t)X)}{1-q}].
\end{equation*}
Then 
\begin{equation*}
  \begin{split}
   & Z[\frac{X}{1-t^{-1}},\frac{Y}{1-t^{-1}};q,t^{-1}] 
   = \sum_{\lambda,\mu} 
    \frac{(-1)^{|\lambda|+|\mu|}
  (t/u)^{\nicefrac{(|\lambda|+|\mu|)}2}}
    {\langle \tilde H_\lambda,\tilde H_\lambda\rangle_*}
  \Delta_{\tilde s_{\mu}}
  \tilde H_\lambda(x;q,t) s_{\mu^t}[\frac{Y}{1-t}]
\\
  =\; & \sum_{k,\mu} 
    (-1)^{k+|\mu|}
  (t/u)^{\nicefrac{(k+|\mu|)}2}
  \Delta_{\tilde s_{\mu}} e_{k}[\frac{X}{(1-q)(1-t)}]
  s_{\mu^t}[\frac{Y}{1-t}].
  \end{split}
\end{equation*}
\end{NB}

\section{Geometric interpretations via Hilbert schemes}

We give geometric interpretations of the stable version of the kernel
function \eqref{eq:kernel} or \eqref{eq:Zkernel2} in the previous
section via Hilbert schemes of points on the plane. We give two
versions, a naive one and the second one. The second one looks better
though it contains an infinite sum.

Let us first replace the inner product $\langle\ ,\ \rangle_*$ in
\eqref{eq:kernel} by $(\ ,\ )$ in \eqref{eq:geom}, where we suppress
the functor $\Phi$. It corresponds to applying operators $\nabla^x$,
$\nabla^y$ to $\tilde H_\lambda(x;q,t)$, $\tilde H_\mu(y;q,t)$ in
\eqref{eq:Zkernel2}. Here $\nabla^x$ means the $\nabla$-operator for
the variable $x$ and similarly for $\nabla^y$.
We denote the result by $Z(x,y;q,t,u,v)$, i.e.,
\begin{equation*}
  Z(x,y;q,t,u,v) \defeq
      \sum_{\lambda,\mu}
  \frac{v^{-|\lambda||\mu|}(tu)^{-\nicefrac{(|\lambda|+|\mu|)}2}}
    {\langle \tilde H_\lambda,\tilde H_\lambda\rangle_*
      \langle \tilde H_\mu,\tilde H_\mu\rangle_*}
  \frac{\tilde H_\mu[1 - u A_\lambda(q,t);q,t]}
    {\Omega[u B_\lambda(q,t)]}
 \nabla^x \tilde H_\lambda(x;q,t) \nabla^y \tilde H_\mu(y;q,t).
\end{equation*}
\begin{NB}
  This is different from $Z$ in the previous section.
\end{NB}%
Therefore
\begin{equation}\label{eq:ZZZ}
  \left.\eqref{eq:kernel}\right|_{t\leftrightarrow t^{-1}}
 = \left( f\left.\left.[\frac{X}{1-t}] \middle|
      Z(x,y;q,t,u,v)
  \middle| g[\frac{Y}{1-t}\right.\right.]\right).
\end{equation}

Recall that
\(
   f[\nicefrac{X}{1-t}]
\)
and
\(
   g[\nicefrac{Y}{1-t}]
\)
are considered as classes in $K_{\C^*\times\C^*}(L^{[n]}_x)$ under $\Phi$
\eqref{eq:KL}.
The expression
\(
      Z(x,y;q,t,u,v)
\)
in the middle is a generating function of classes in the localized
equivariant $K$-group $K_{\C^*\times\C^*}(X^{[n]}\times
X^{[n]})\otimes_{\Z[q^{\pm 1},t^{\pm 1}]} \otimes \Q(q,t)$.
As the inner product $(\ ,\ )$ is simply given by the functor $\mathbf
R\Gamma_{X^{[n]}}(-\otimes^{\mathbf L}-)$, the above \eqref{eq:ZZZ}
is simply given by the convolution product
\begin{equation*}
  \mathbf R\Gamma_{X^{[n]}\times X^{[n]}}(
  p_1^*(f[\frac{X}{1-t}])\otimes^{\mathbf L}
      Z(x,y;q,t,u,v)
  \otimes^{\mathbf L}
  p_2^*(g[\frac{Y}{1-t}] )).
\end{equation*}

Recall that we have a modified version of $Z(x,y;q,t,u,v)$ in
\eqref{eq:kernelmod}, where we have an extra factor
\begin{equation*}
  \frac{\Omega[u B_\lambda(q,t)]}{\Omega[uqt B_\lambda(q,t)]}
    \frac{\Omega[u B_\mu(q,t)]}{\Omega[uqt B_\mu(q,t)]}.
\end{equation*}
(Recall that we replaced $t$ by $t^{-1}$.)
This factor appears as
\(
   \Delta_f^x \tilde H_\lambda(x;q,t) \Delta_f^y\tilde H_\mu(y;q,t)
\)
with
\begin{equation*}
   f(x) = \frac{\Omega[uX]}{\Omega[uqtX]}.
\end{equation*}
Under $\Phi$, $\Delta_f$ gives an operator multiplying
\begin{equation*}
  \frac{\Wedge_{-u} qt\shfO^{[n]}}{\Wedge_{-u} \shfO^{[n]}}.
\end{equation*}
We do not understand whether $Z$ or its modified version is correct
yet, but we only treat $Z(x,y;q,t,u,v)$ as the difference is just
given above.

\subsection{A naive geometric interpretation}

First note that
\begin{equation*}
  \frac{\nabla^x \tilde H_\lambda(x;q,t)}
  {\langle \tilde H_\lambda,\tilde H_\lambda\rangle_*}
  = \frac{I_\lambda}{\ch\Wedge_{-1} T^*_{I_\lambda} X^{[|\lambda|]}}
\end{equation*}
by \eqref{eq:HilbTang} and that $\tilde H_\lambda$ corresponds to the
class $I_\lambda$ for the fixed point as we explained in
\subsecref{subsec:Haiman}.

The factor 
\(
  \nicefrac1{\Omega[u B_\lambda(q,t)]}
\)
is the character
\(
   \ch \Wedge_{-u} \shfO^{[n]}_{I_\lambda}
\)
of the exterior power of the tautological bundle at the point
$I_\lambda$ by \eqref{eq:taut}.

As $\tilde H_\mu$ is a linear combination
\(
   \tilde H_\mu = \sum_\nu \tilde K_{\nu\mu}(q,t) s_\nu
\)
of Schur functions, we have
\begin{equation*}
  \tilde H_\mu[1 - u A_\lambda;q,t]
  = \sum_{\nu} \tilde K_{\nu\mu}(q,t)
  s_\nu[1 - u A_\lambda].
\end{equation*}
Since $A_\lambda(q,t)$ is the character
\(
  \ch \calE_{(0,I_\lambda)}
\)
by \eqref{eq:universal}, we can interpret
\(
  s_\nu[1 - u A_\lambda]
\)
as the fiber at $I_\lambda$ of
$s_\nu[\shfO - u \calE/[0]]$,
the Schur functor $s_\nu$ applied to $\shfO - u \calE/[0]$.
Moreover $\tilde K_{\nu\mu}(q,t)$ is given by the Procesi bundle
$\mathcal P$:
\begin{equation*}
  \tilde K_{\nu\mu}(q,t) = \ch \mathcal (P_\nu)_{I_\mu}.
\end{equation*}

Therefore $Z(x,y;q,t,u,v)$ is the sum of the fiber of
\begin{equation*}
  \bigoplus_\nu
  (\Wedge_{-u} \shfO^{[n]}\otimes s_\nu[\shfO - u\calE/[0]])
    \boxtimes \mathcal P_\nu
\end{equation*}
at $(I_\lambda, I_\mu)$, divided by
\begin{equation*}
  {\ch \Wedge_{-1} T^*_{I_\lambda} X^{[|\lambda|]}
    \ch \Wedge_{-1} T^*_{I_\mu} X^{[|\mu|]}}.
\end{equation*}
This is the first geometric interpretation of $Z(x,y;q,t,u,v)$. It is
a compact form, but seems to difficult to its geometric meaning.

\subsection{The second geometric interpretation}\label{subsec:second}

In \cite[The proof of Th.~3.3]{GHT} the symmetry of
$\Snorm_{\lambda\mu}$ is proved by an application of the following:
\begin{Theorem}\label{thm:symmetry}
  \begin{equation*}
    \frac{\tilde H_\mu[1 - u A_\lambda(q,t);q,t]}
  {\Omega[u B_\lambda(q,t)]}
  = \Omega[\frac{-u}{(1-t)(1-q)}]
  \left\langle
    \nabla^{-1} \left(\tilde\Omega[\frac{-X A_\mu(q,t)}{(1-q)(1-t)}]\right), 
    \tilde\Omega[\frac{-uX A_\lambda(q,t)}{(1-q)(1-t)}]
    \right\rangle_*
  \end{equation*}
\end{Theorem}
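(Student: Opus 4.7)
My plan is to verify this identity by expanding both $\tilde\Omega$-factors on the right-hand side in the basis of modified Macdonald polynomials via the Cauchy formula \eqref{eq:Cauchy} and collapsing the resulting double sum to an identity about $\tilde H_\mu$. Substituting $Y\to -uA_\lambda(q,t)$ and $Y\to -A_\mu(q,t)$ plethystically in \eqref{eq:Cauchy} produces expansions of $\tilde\Omega[-uXA_\lambda/((1-q)(1-t))]$ and $\tilde\Omega[-XA_\mu/((1-q)(1-t))]$ in the $\tilde H_\nu(X)$-basis, with coefficients $\tilde H_\nu[-uA_\lambda]/\langle \tilde H_\nu,\tilde H_\nu\rangle_*$ and $\tilde H_\nu[-A_\mu]/\langle \tilde H_\nu,\tilde H_\nu\rangle_*$ respectively.

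Since $\nabla^{-1}$ is diagonal on the $\tilde H_\nu$-basis with eigenvalue $(q^{n(\nu^t)}t^{n(\nu)})^{-1}$ by \eqref{eq:nabla}, and the $\tilde H_\nu$'s are orthogonal under $\langle\cdot,\cdot\rangle_*$ by \eqref{eq:Hinner}, the inner product on the right collapses to
\[
\bigl\langle\nabla^{-1}\tilde\Omega[\tfrac{-XA_\mu}{(1-q)(1-t)}],\,\tilde\Omega[\tfrac{-uXA_\lambda}{(1-q)(1-t)}]\bigr\rangle_* = \sum_\nu \frac{\tilde H_\nu[-A_\mu]\,\tilde H_\nu[-uA_\lambda]}{q^{n(\nu^t)}t^{n(\nu)}\langle\tilde H_\nu,\tilde H_\nu\rangle_*}.
\]
Using $(1-q)(1-t)B_\lambda(q,t) = 1 - A_\lambda(q,t)$ together with $\Omega[Y+Z]=\Omega[Y]\Omega[Z]$ gives
\(
\Omega[uB_\lambda]\cdot\Omega[-u/((1-q)(1-t))] = \Omega[-uA_\lambda/((1-q)(1-t))],
\)
and the theorem reduces to the identity
\[
\tilde H_\mu[1-uA_\lambda] \;=\; \Omega\bigl[\tfrac{-uA_\lambda}{(1-q)(1-t)}\bigr]\,\sum_\nu \frac{\tilde H_\nu[-A_\mu]\,\tilde H_\nu[-uA_\lambda]}{q^{n(\nu^t)}t^{n(\nu)}\langle\tilde H_\nu,\tilde H_\nu\rangle_*}.
\]

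The main obstacle is this final identity, which is the substance of the Cherednik–Macdonald–Mehta formula. At $\lambda=\emptyset$ (so $A_\emptyset=1$) it collapses to the classical specialization $\tilde H_\mu[1-u;q,t] = 1/\Omega[uB_\mu(q,t)]$ from \cite[VI(6.11')]{Mac}, equivalently \cite[(3.5.20)]{Haiman2}, which will serve as the seed. For general $\lambda$ the identity is precisely what Garsia–Haiman–Tesler establish in the proof of \cite[Th.~3.3]{GHT}; I would invoke their computation directly. A slightly different self-contained route is to exchange the roles of $\lambda$ and $\mu$ using self-adjointness of $\nabla$ under $\langle\cdot,\cdot\rangle_*$ (both $F$ and $G$ are expanded in the $\tilde H_\nu$-basis by the same Cauchy substitution) together with the $q,t$-symmetry \eqref{eq:omegaH}, thereby reducing the general-$\lambda$ statement to the $\lambda=\emptyset$ seed via a second application of the plethystic Cauchy kernel in the $\mu$-variable.
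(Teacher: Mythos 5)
The paper does not prove this theorem itself --- it is quoted directly from Garsia--Haiman--Tesler (the sentence preceding the statement attributes it to the proof of \cite[Th.~3.3]{GHT}, modulo the correction of a typo) --- and your main route, namely a correct unwinding of the right-hand side via \eqref{eq:Cauchy}, \eqref{eq:Hinner} and \eqref{eq:nabla} followed by an appeal to the Cherednik--Macdonald--Mehta computation of \cite{GHT}, is therefore essentially the same appeal the paper makes (and your intermediate expansion reappears verbatim in the paper's proof of \thmref{thm:HilbHilb}). One caution: the ``self-contained route'' you sketch at the end does not close the argument, because the $\lambda\leftrightarrow\mu$ symmetry of both sides (which does follow from self-adjointness of $\nabla$, homogeneity of $\tilde H_\nu$, and Koornwinder--Macdonald reciprocity) together with the $\lambda=\emptyset$ seed does not determine the identity --- two symmetric functions of $(\lambda,\mu)$ can agree whenever one argument is empty and still differ in general --- so the general-$\lambda$ case genuinely requires the GHT argument rather than a formal reduction to the specialization $\tilde H_\mu[1-u;q,t]=\Omega[uB_\mu(q,t)]^{-1}$.
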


\begin{NB}
  Recall $u= t^{-N}$ is substituted later. Both $u$ are positive
  powers in the left and right hand side. It seems that this does not
  match (or does match ?) with \propref{prop:Snorm}.
\end{NB}

Since $\nabla$ is self-adjoint with respect to $\langle\ ,\ \rangle_*$
from \eqref{eq:Hinner}, or its geometric interpretation in
\subsecref{subsec:Haiman}, the symmetry under
$\lambda\leftrightarrow\mu$ is clear.

It should be also remarked that this is an equality in
$\Q(q,t)[[u]]$. The left hand side is in $\Z[q^{\pm 1}, t^{\pm 1},
u]$, but the right hand side contains an infinite sum in powers of $u$.
\begin{NB}
  Therefore it is not clear at this moment that why we can put 
  $u$ at $t^{-N}$.
\end{NB}

Note that there is a typo in \cite{GHT}. The second $u$ was missing.

\begin{NB}
By \cite[5.4.7]{Haiman2} we have
\begin{equation*}
    \left\langle
    \nabla^{-1} \tilde\Omega[\frac{-X A_\mu(q,t)}{(1-q)(1-t)}], 
    \tilde\Omega[\frac{-uX A_\lambda(q,t)}{(1-q)(1-t)}]
    \right\rangle_*
    =
    \left\langle
    \tilde\Omega[\frac{-X A_\mu(q,t)}{(1-q)(1-t)}], 
    \tilde\Omega[\frac{-uX A_\lambda(q,t)}{(1-q)(1-t)}]
    \right\rangle_{.},
\end{equation*}
\end{NB}%

Let us understand the right hand side in terms of Hilbert schemes. Let
us first remark that
\begin{equation*}
  \Omega[\frac{-u}{(1-t)(1-q)}] = \Omega[\frac{u}{(1-t)(1-q)}]^{-1}
  = \left(\sum_{n=0}^\infty u^n \ch\Gamma_{S^nX}(\shfO_{S^nX})\right)^{-1},
\end{equation*}
which we calculated as
\begin{equation*}
  \left(
  \sum_{n=0}^\infty u^n \ch {\mathbf R}\Gamma_{X^{[n]}}(\shfO_{X^{[n]}})
  \right)^{-1}
\end{equation*}
in \eqref{eq:S3}.

The second term can be also calculated to get
\begin{Theorem}\label{thm:HilbHilb}
  \begin{equation}\label{eq:HilbHilb}
    \begin{split}
  &\frac{\tilde H_\mu[1 - u A_\lambda(q,t);q,t]}
  {\Omega[u B_\lambda(q,t)]}
  \sum_{n=0}^\infty u^n \ch \mathbf R\Gamma_{X^{[n]}}(\shfO_{X^{[n]}})
\\
=\; &
  \sum_{n=0}^\infty u^n \ch \mathbf R\Gamma_{X^{[n]}}(
  (\omega\nabla^{-1}\tilde H_\lambda)[\uE;q,t]
  \otimes (\omega\nabla^{-1}\tilde H_\mu)[\uE;q,t]).
    \end{split}
\end{equation}
\end{Theorem}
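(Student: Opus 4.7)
The plan is to reduce \eqref{eq:HilbHilb} to a Macdonald reciprocity identity by combining \thmref{thm:symmetry} with the Atiyah-Bott-Lefschetz formula. Multiplying the identity of \thmref{thm:symmetry} through by $\Omega[\frac{u}{(1-q)(1-t)}]$, which equals $\sum_{n\ge 0} u^n\,\ch\mathbf R\Gamma_{X^{[n]}}(\shfO_{X^{[n]}})$ by \eqref{eq:S3}, and using the elementary cancellation $\Omega[\frac{-u}{(1-q)(1-t)}]\Omega[\frac{u}{(1-q)(1-t)}]=1$, the left-hand side of \eqref{eq:HilbHilb} becomes the inner product
\begin{equation*}
\left\langle \nabla^{-1}\tilde\Omega\bigl[\tfrac{-X A_\mu(q,t)}{(1-q)(1-t)}\bigr],\,\tilde\Omega\bigl[\tfrac{-uX A_\lambda(q,t)}{(1-q)(1-t)}\bigr]\right\rangle_*.
\end{equation*}

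Next I would expand both $\tilde\Omega$-factors by substituting $Y\mapsto -A_\mu(q,t)$ and $Y\mapsto -uA_\lambda(q,t)$ into the Cauchy identity \eqref{eq:Cauchy}, evaluate the resulting inner product diagonally via \eqref{eq:Hinner} and \eqref{eq:nabla}, and use the homogeneity $\tilde H_\alpha[-uZ]=u^{|\alpha|}\tilde H_\alpha[-Z]$. This collapses the left-hand side to
\begin{equation*}
\sum_\alpha u^{|\alpha|}\frac{q^{-2n(\alpha^t)}t^{-2n(\alpha)}\,\tilde H_\alpha[-A_\mu(q,t)]\,\tilde H_\alpha[-A_\lambda(q,t)]}{c_\alpha(q^{-1},t)\,c'_\alpha(q,t^{-1})}.
\end{equation*}
In parallel, applying the Atiyah-Bott-Lefschetz formula to the right-hand side of \eqref{eq:HilbHilb}, with the tangent character $\ch\Wedge_{-1}T^*_{I_\alpha}X^{[|\alpha|]}=c_\alpha(q^{-1},t)c'_\alpha(q,t^{-1})$ from \eqref{eq:HilbTang} and the fiber identification $\ch\uE|_{I_\alpha}=A_\alpha(q,t)$ from \eqref{eq:universal}, gives
\begin{equation*}
\sum_\alpha u^{|\alpha|}\frac{(\omega\nabla^{-1}\tilde H_\lambda)[A_\alpha(q,t)]\,(\omega\nabla^{-1}\tilde H_\mu)[A_\alpha(q,t)]}{c_\alpha(q^{-1},t)\,c'_\alpha(q,t^{-1})}.
\end{equation*}

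Comparing the two sums term by term, it is enough to establish, for every pair $(\alpha,\nu)$ of partitions, the factor-wise identity
\begin{equation*}
q^{-n(\alpha^t)}t^{-n(\alpha)}\,\tilde H_\alpha[-A_\nu(q,t);q,t] = (-1)^{|\alpha|}\,(\omega\nabla^{-1}\tilde H_\nu)[A_\alpha(q,t);q,t];
\end{equation*}
the sign $(-1)^{|\alpha|}$ occurs twice (once for $\nu=\mu$, once for $\nu=\lambda$) and therefore cancels. From $p_k[-Z]=-p_k[Z]$ and the homogeneity of $\tilde H_\alpha$ one obtains $\tilde H_\alpha[-Z;q,t]=(-1)^{|\alpha|}(\omega\tilde H_\alpha)[Z;q,t]$; combined with \eqref{eq:omegaH}, which gives $\omega\tilde H_\alpha(x;q,t)=q^{n(\alpha^t)}t^{n(\alpha)}\tilde H_\alpha(x;q^{-1},t^{-1})$ and consequently $\omega\nabla^{-1}\tilde H_\nu(x;q,t)=\tilde H_\nu(x;q^{-1},t^{-1})$, the required factor-wise identity reduces to the clean symmetry
\begin{equation*}
\tilde H_\alpha[A_\nu(q,t);q^{-1},t^{-1}] = \tilde H_\nu[A_\alpha(q,t);q^{-1},t^{-1}].
\end{equation*}
This is the content of the Koornwinder-Macdonald reciprocity \cite[VI(6.6)]{Mac}, transcribed into the $\tilde H$-basis; it is precisely the symmetry already invoked after \eqref{eq:Snorm} to prove $\Snorm_{\lambda\mu}=\Snorm_{\mu\lambda}$. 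The main obstacle is the careful bookkeeping required to pass from the $P_\mu$-version in \cite[VI]{Mac} to the $\tilde H$-version above, tracking both the normalization factors from \secref{sec:part} and the interchange $q\leftrightarrow q^{-1}$, $t\leftrightarrow t^{-1}$; alternatively, specializing arguments in \thmref{thm:symmetry} and comparing coefficients should yield this identity directly from the Garsia-Haiman-Tesler input already in hand.
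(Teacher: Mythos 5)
Your proposal is correct and follows essentially the same route as the paper: Theorem~\ref{thm:symmetry} combined with the Cauchy formula \eqref{eq:Cauchy} and the orthogonality \eqref{eq:Hinner} to expand the inner product as a sum over fixed points, the Atiyah--Bott--Lefschetz formula to identify the right-hand side, and the index-swapping identity $\tilde H_\alpha[A_\nu(q,t);q^{-1},t^{-1}]=\tilde H_\nu[A_\alpha(q,t);q^{-1},t^{-1}]$ to match the two sums. Your ``factor-wise identity'' is exactly the paper's Lemma~\ref{lem:symmetry}, which the paper likewise obtains by specializing the reciprocity-based symmetry of $\tilde H_\mu[1-uA_\lambda;q,t]/\Omega[uB_\lambda(q,t)]$ at $u^{-1}=0$, precisely the alternative you indicate at the end.
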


Here $(\omega\nabla^{-1}\tilde H_\lambda)[\uE;q,t]$ is the plethystic
substitution of the universal sheaf $\uE$ at the fiber $0\times
X^{[n]}$ to $(\omega\nabla^{-1}\tilde H_\lambda)(x;q,t)$. A little
more concretely, as
\begin{equation*}
  \omega\nabla^{-1} \tilde H_\lambda(x;q,t)
  = \tilde H_\lambda(x;q^{-1},t^{-1})
  = \sum_{\sigma} \tilde K_{\sigma\lambda}(q^{-1},t^{-1}) s_\sigma,
\end{equation*}
it is nothing but
\begin{equation*}
  (\omega\nabla^{-1}\tilde H_\lambda)[\uE;q,t]
  = \bigoplus_\sigma \tilde K_{\sigma\lambda}(q^{-1},t^{-1})
  s_\sigma[\uE], 
\end{equation*}
where $s_\sigma[\uE]$ is the Schur functor applied to $\uE$, as in the
previous subsection.

\begin{NB}
By the positivity theorem \cite{Haiman}, we have $\tilde
K_{\sigma\lambda}\in \Z_{\ge 0}[q,t]$, and hence
$\omega\nabla^{-1}H_\lambda[\uE]$ makes sense.
\begin{NB2}
  Since the universal sheaf is not a vector bundle, I am not sure that
  I can apply Haiman's vanishing theorem.
\end{NB2}
\end{NB}

Before giving the proof of \thmref{thm:HilbHilb}, let us
consider a special case $\lambda=\emptyset$. We can slightly change the
expression.
\begin{Corollary}\label{cor:empty}
  \begin{equation*}
  (\nabla s_{\lambda^t})[1-u]
  \sum_{n=0}^\infty u^n \ch \mathbf R\Gamma_{X^{[n]}}(\shfO_{X^{[n]}})  
  = 
  \sum_{n=0}^\infty u^n \ch \mathbf R\Gamma_{X^{[n]}}(
  s_\lambda[\uE]).
\end{equation*}
\end{Corollary}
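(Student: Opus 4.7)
The plan is to derive \corref{cor:empty} as a direct specialization of \thmref{thm:HilbHilb} combined with a linearity-plus-change-of-basis step. The work is almost entirely formal once \thmref{thm:HilbHilb} is in hand; the only thing to be checked carefully is that the basis change does not introduce any illegal substitutions.

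First, I would specialize the identity \eqref{eq:HilbHilb} by setting $\lambda = \emptyset$. For the empty partition one has $A_\emptyset(q,t) = 1$ and $B_\emptyset(q,t) = 0$, so the factor $\Omega[u B_\emptyset(q,t)]^{-1}$ in the left hand side collapses to $1$. Moreover $\tilde H_\emptyset = 1$, so $(\omega\nabla^{-1}\tilde H_\emptyset)[\uE;q,t] = 1$ and the corresponding tensor factor on the right hand side disappears. Writing $A(u) \defeq \sum_{n\ge 0} u^n\, \ch \mathbf R\Gamma_{X^{[n]}}(\shfO_{X^{[n]}})$ for the Euler-characteristic generating function computed in \eqref{eq:S3}, the $\lambda=\emptyset$ case of \thmref{thm:HilbHilb} reads
\begin{equation*}
  \tilde H_\mu[1-u;q,t]\, A(u)
  = \sum_{n\ge 0} u^n\, \ch \mathbf R\Gamma_{X^{[n]}}\bigl((\omega\nabla^{-1}\tilde H_\mu)[\uE;q,t]\bigr)
\end{equation*}
for every partition $\mu$.

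Next, I would observe that both sides of this identity are $\Q(q,t)$-linear in their symmetric-function argument: on the left via the plethystic substitution $f \mapsto f[1-u]$, and on the right via $f \mapsto (\omega\nabla^{-1}f)[\uE;q,t]$ together with the linearity of $\mathbf R\Gamma$ on the equivariant $K$-group. Since $\{\tilde H_\mu\}$ is a $\Q(q,t)$-basis of $\Lambda\otimes\Q(q,t)$ (as recorded by the characterizing properties (i)--(iii) after \eqref{eq:K}), the identity extends by linearity to an identity
\begin{equation*}
  f[1-u]\, A(u)
  = \sum_{n\ge 0} u^n\, \ch \mathbf R\Gamma_{X^{[n]}}\bigl((\omega\nabla^{-1}f)[\uE;q,t]\bigr)
\end{equation*}
valid for every $f \in \Lambda\otimes\Q(q,t)$.

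Finally, specialize to $f = \nabla s_{\lambda^t}$. The operator $\nabla$ is invertible (it acts by the nonzero scalar $q^{n(\mu^t)} t^{n(\mu)}$ on $\tilde H_\mu$), so $\nabla^{-1}(\nabla s_{\lambda^t}) = s_{\lambda^t}$, and since $\omega s_{\lambda^t} = s_\lambda$, we obtain $\omega\nabla^{-1}f = s_\lambda$. The right hand side therefore becomes $\sum_n u^n \ch \mathbf R\Gamma_{X^{[n]}}(s_\lambda[\uE])$, which is exactly the right hand side of \corref{cor:empty}. The main (mild) point to verify is that the linearity step is carried out over the correct coefficient ring: the change of basis from $\{\tilde H_\mu\}$ to $\{s_\lambda\}$ (and the action of $\nabla$) takes place inside $\Lambda\otimes\Q(q,t)$, so both sides are well-defined elements of $\Q(q,t)[[u]]$, and no $q,t$-specialization is required to make sense of the resulting identity.
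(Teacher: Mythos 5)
Your proposal is correct and follows essentially the same route as the paper: specialize \thmref{thm:HilbHilb} at $\lambda=\emptyset$, then pass from the basis $\{\tilde H_\mu\}$ to $\{\nabla s_{\lambda^t}\}$ (the paper phrases this as inverting the transition matrix $(\tilde K_{\lambda\mu}(q^{-1},t^{-1}))$, which is exactly your linearity-plus-change-of-basis step). No gaps.
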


This special case is already very surprising formula as it computes
holomorphic Euler characteristics of universal sheaves twisted by
arbitrary Schur functors. Note that $\nabla$ is defined in terms of
$\tilde H_\mu$, and hence is very complicated to compute $\nabla
s_{\lambda^t}$.

\begin{proof}
\thmref{thm:HilbHilb} with $\lambda=\emptyset$ gives
\begin{equation}\label{eq:qtdim}
  \tilde H_\mu[1-u;q,t] 
  = \Omega[\frac{-u}{(1-t)(1-q)}]
  \sum_{n=0}^\infty u^n \ch \mathbf R\Gamma_{X^{[n]}}(
  (\omega\nabla^{-1} \tilde H_\mu)[\uE;q,t]).
\end{equation}
\begin{NB}
Note that
\begin{equation*}
  \tilde H_\mu[1-u;q,t] 
  = \Omega[u B_\mu(q,t)]^{-1} = \ch\Wedge_{-u}\shfO^{[n]}_{I_\mu}.
\end{equation*}
This may suggest that we should understand the formula something on
$X^{[n]}\times X^{[n]}$.
\end{NB}%

If we write
\begin{equation*}
  \omega \nabla^{-1} \tilde H_\mu(x;q,t) 
  = \sum_\lambda \tilde K_{\lambda\mu}(q^{-1},t^{-1}) s_\lambda, 
\end{equation*}
we have
\begin{equation*}
  \tilde H_\mu(x;q,t) = \sum_\lambda \tilde K_{\lambda\mu}(q^{-1},t^{-1})
  \nabla s_{\lambda^t}.
\end{equation*}
\begin{NB}
  $\omega s_{\lambda} = s_{\lambda^t}$.
\end{NB}
Since the matrix $(\tilde K_{\lambda\mu}(q^{-1},t^{-1}))$ is
invertible, we have the assertion.
\end{proof}

Let us start the proof of \thmref{thm:HilbHilb}. We first prepare the
following.

\begin{Lemma}\label{lem:symmetry}
  \begin{equation*}
    (-1)^{|\lambda|}\tilde H_\mu[-A_\lambda(q,t)] q^{n(\lambda^t)} t^{n(\lambda)}
    = (-1)^{|\mu|}\tilde H_\lambda[-A_\mu(q,t)] q^{n(\mu^t)} t^{n(\mu)}.
  \end{equation*}
\end{Lemma}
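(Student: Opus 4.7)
The plan is to derive the lemma as the leading coefficient in an auxiliary variable $u$ of the identity
\[
\frac{\tilde H_\mu[1-uA_\lambda(q,t);q,t]}{\Omega[uB_\lambda(q,t)]}
=
\frac{\tilde H_\lambda[1-uA_\mu(q,t);q,t]}{\Omega[uB_\mu(q,t)]},
\]
viewed as an identity of polynomials in $u$ of degree $|\lambda|+|\mu|$.

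To establish this symmetry, I will expand the inner product on the right-hand side of \thmref{thm:symmetry} in the modified Macdonald basis $\{\tilde H_\nu\}$. Using the Cauchy formula \eqref{eq:Cauchy} to write $\tilde\Omega[-XA_\mu(q,t)/((1-q)(1-t))]$ and $\tilde\Omega[-uXA_\lambda(q,t)/((1-q)(1-t))]$ as expansions in $\tilde H_\nu(X;q,t)$, then applying the eigenvalue relation \eqref{eq:nabla} for $\nabla$ and the orthogonality \eqref{eq:Hinner}, the right-hand side of \thmref{thm:symmetry} rewrites as
\[
\Omega\!\left[\frac{-u}{(1-q)(1-t)}\right]\sum_\nu \frac{q^{-2n(\nu^t)}t^{-2n(\nu)}\,u^{|\nu|}}{c_\nu(q^{-1},t)\,c'_\nu(q,t^{-1})}\,\tilde H_\nu[-A_\mu(q,t)]\,\tilde H_\nu[-A_\lambda(q,t)],
\]
which is manifestly invariant under $\lambda\leftrightarrow\mu$; hence so is $\tilde H_\mu[1-uA_\lambda]/\Omega[uB_\lambda]$. (One could equivalently invoke the Koornwinder-Macdonald reciprocity $\Snorm_{\lambda\mu}(q,t)=\Snorm_{\mu\lambda}(q,t)$ via \propref{prop:Snorm}, canceling the manifestly $(\lambda\leftrightarrow\mu)$-symmetric prefactors and treating $u=t^{-N}$ as a formal parameter, with polynomiality in $u$ and infinitely many valid specializations upgrading this to a polynomial identity.)

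It then remains to extract the top-degree coefficient in $u$. The factor $\tilde H_\mu[1-uA_\lambda(q,t);q,t]$ is a polynomial of degree $|\mu|$ in $u$ whose leading coefficient, by homogeneity of $\tilde H_\mu$, equals $\tilde H_\mu[-A_\lambda(q,t)]$. The factor $\Omega[uB_\lambda(q,t)]^{-1}=\prod_{s\in\lambda}(1-uq^{a'(s)}t^{l'(s)})$ is a polynomial of degree $|\lambda|$ in $u$ with leading coefficient $(-1)^{|\lambda|}\prod_{s\in\lambda}q^{a'(s)}t^{l'(s)}=(-1)^{|\lambda|}q^{n(\lambda^t)}t^{n(\lambda)}$ by \eqref{eq:CS}. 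Comparing the $u^{|\lambda|+|\mu|}$-coefficients on the two sides of the symmetric identity then produces exactly
\[
(-1)^{|\lambda|}\tilde H_\mu[-A_\lambda(q,t)]\,q^{n(\lambda^t)}t^{n(\lambda)}=(-1)^{|\mu|}\tilde H_\lambda[-A_\mu(q,t)]\,q^{n(\mu^t)}t^{n(\mu)},
\]
which is the statement of the lemma.

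The main obstacle is purely bookkeeping: confirming polynomiality and the degrees $|\mu|$ and $|\lambda|$ in the two factors (in particular, that the rational-looking expression $\tilde H_\mu[1-uA_\lambda]/\Omega[uB_\lambda]$ is actually polynomial in $u$), and correctly tracking the signs $(-1)^{|\lambda|}$, $(-1)^{|\mu|}$ that arise from the leading coefficients of $\Omega[uB]^{-1}$.
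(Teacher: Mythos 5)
Your proof is correct and follows essentially the same route as the paper: both start from the $\lambda\leftrightarrow\mu$ symmetry of $\tilde H_\mu[1-uA_\lambda(q,t);q,t]\,\Omega[uB_\lambda(q,t)]^{-1}$ (a consequence of \thmref{thm:symmetry}, whose right-hand side is manifestly symmetric since $\nabla$ is self-adjoint) and then extract the top coefficient in $u$. The paper phrases this last step as homogenizing and setting $u^{-1}=0$, which is identical to your comparison of the $u^{|\lambda|+|\mu|}$-coefficients.
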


\begin{proof}
  Let us consider the left hand side of \eqref{eq:HilbHilb}, where the
  denominator is replaced by the middle term in \eqref{eq:taut}. Since
  it is symmetric under the $\lambda$, $\mu$, we have
\begin{equation*}
  \tilde H_\mu[1 - u A_\lambda(q,t);q,t]
  \prod_{s\in\lambda}(1 - u q^{a'(s)} t^{l'(s)})
  =   \tilde H_\lambda[1 - u A_\mu(q,t);q,t]
  \prod_{s\in\mu}(1 - u q^{a'(s)} t^{l'(s)}).
\end{equation*}
Therefore
\begin{equation*}
  \tilde H_\mu[u^{-1} - A_\lambda(q,t);q,t]
  \prod_{s\in\lambda}(u^{-1} - q^{a'(s)} t^{l'(s)})
  =   \tilde H_\lambda[u^{-1} - A_\mu(q,t);q,t]
  \prod_{s\in\mu}(u^{-1} - q^{a'(s)} t^{l'(s)}).
\end{equation*}
Setting $u^{-1} = 0$, we get the assertion.
\end{proof}

Now we are ready to calculate the second term in the right hand side
of \thmref{thm:symmetry}. By (\ref{eq:Cauchy},\ref{eq:Hinner}) we have
\begin{equation*}
  \begin{split}
    &\left\langle
    \nabla^{-1} \tilde\Omega[\frac{-X A_\mu(q,t)}{(1-q)(1-t)}], 
    \tilde\Omega[\frac{-uX A_\lambda(q,t)}{(1-q)(1-t)}]
    \right\rangle_*
\\
    = \; & \sum_\nu 
    \frac{q^{-n(\nu^t)} t^{-n(\nu)} \tilde H_\nu[- A_\mu(q,t)]}
    {c_\nu(q^{-1},t) c'_\nu(q,t^{-1})}
    \frac{q^{-n(\nu^t)} t^{-n(\nu)} \tilde H_\nu[- uA_\lambda(q,t)]}
    {c_\nu(q^{-1},t) c'_\nu(q,t^{-1})}
    \langle \nabla^{-1} \tilde H_\nu(x;q,t), \tilde H_\nu(x;q,t)\rangle_*
\\
   =\; & \sum_\nu 
    \frac{q^{-2n(\nu^t)} t^{-2n(\nu)} 
     \tilde H_\nu[- A_\mu(q,t)] \tilde H_\nu[- uA_\lambda(q,t)]}
    {c_\nu(q^{-1},t) c'_\nu(q,t^{-1})}.
  \end{split}
\end{equation*}
By \lemref{lem:symmetry} this is equal to
\begin{equation*}
  \sum_\nu  \frac{(-1)^{|\lambda|+|\mu|}
    q^{-n(\lambda^t)-n(\mu^t)} t^{-n(\lambda)-n(\mu)} u^{|\nu|} 
      \tilde H_\lambda[-A_\nu(q,t)] \tilde H_\mu[-A_\nu(q,t)]}
    {c_\nu(q^{-1},t) c'_\nu(q,t^{-1})}.
\end{equation*}
We have
\begin{equation*}
  (-1)^{|\lambda|} q^{-n(\lambda^t)} t^{-n(\lambda)}
  \tilde H_\lambda[-A_\nu(q,t)]
  = (\omega \nabla^{-1} \tilde H_\lambda)[A_\nu(q,t)].
\end{equation*}

Note that $A_\nu(q,t)$ is the character of $\calE|_{0\times I_\nu}$,
and the denominator is $\Wedge_{-1} T^*_{I_\nu} X^{[n]}$ by
\eqref{eq:HilbTang}. Hence the Atiyah-Bott-Lefschetz fixed point
formula gives the right hand side of \thmref{thm:HilbHilb}.
This completes the proof of \thmref{thm:HilbHilb}.

We substitute \thmref{thm:HilbHilb} to the definition of
$Z(x,y;q,t,u,v)$ to get
\begin{equation*}
  \begin{split}
  &   Z(x,y;q,t,u,v)
\\
  = \; & \sum_{\lambda,\mu}
  \begin{aligned}[t]
  & \frac{v^{-|\lambda||\mu|}(tu)^{-\nicefrac{(|\lambda|+|\mu|)}2}}
  {\langle \tilde H_\lambda, \tilde H_\lambda\rangle_*
    \langle \tilde H_\mu, \tilde H_\mu\rangle_*}
 \nabla^x  \tilde H_\lambda(x;q,t) \nabla^y \tilde H_\mu(y;q,t)
\\
  &\quad\times  
  \frac{\sum_{n=0}^\infty u^n \ch\mathbf R\Gamma_{X^{[n]}}(
  (\omega\nabla^{-1}\tilde H_\lambda)[\uE;q,t]\otimes
  (\omega\nabla^{-1}\tilde H_\mu)[\uE;q,t]}
  {\sum_{n=0}^\infty u^n \ch \mathbf R\Gamma_{X^{[n]}}(\shfO_{X^{[n]}})}.
  \end{aligned}
  \end{split}
\end{equation*}
\begin{NB}
  Here $u$ must be replaced by $t^{-N}$, which looks a little strange.
\end{NB}%
By \eqref{eq:Cauchy}
\begin{equation*}
  \begin{split}
  & \sum_\lambda \frac{\nabla^x \tilde H_\lambda(x;q,t) 
    \omega(\nabla^z)^{-1}\tilde H_\lambda(z;q,t)}
  {\langle \tilde H_\lambda, \tilde H_\lambda\rangle_*}
  = \sum_\lambda \frac{\tilde H_\lambda(x;q,t) 
    \omega\tilde H_\lambda(z;q,t)}
  {\langle \tilde H_\lambda, \tilde H_\lambda\rangle_*}
\\
  = \; & 
  \omega^z \tilde \Omega\left[\frac{XZ}{(1-q)(1-t)}\right]
  = \Omega\left[\frac{XZ}{(1-q)(1-t)}\right].
  \end{split}
\end{equation*}
Using the usual Cauchy formula for Schur functions, we write this as
\begin{equation*}
  \sum_\lambda s_\lambda[\frac{X}{1-q}]
  s_{\lambda}[\frac{Z}{1-t}].
\end{equation*}
Since
\(
   \nicefrac1{1-q} = \ch \C[y],
\)
$\uE/(1-q)$ means the slant product $\calE/\{x=0\}$. Similarly
$\uE/(1-t)$ means $\calE/\{y=0\}$.

Therefore we get our main result.
\begin{Theorem}\label{thm:main}
\begin{equation}\label{eq:ZZ}
  Z(x,y;q,t,u,v)
  = \sum_{\lambda,\mu}
  \begin{aligned}[t]
  & v^{-|\lambda||\mu|}(tu)^{-\nicefrac{(|\lambda|+|\mu|)}2}
   s_\lambda[\frac{X}{1-t}] s_\mu[\frac{Y}{1-q}]
\\
  &\quad\times  
  \frac{
    \sum_{n=0}^\infty u^n \ch\mathbf R\Gamma_{X^{[n]}}(
    s_\lambda[\calE/\{ x=0\}]\otimes
    s_\mu[\calE/\{ y=0\}])
  }
  {\sum_{n=0}^\infty u^n \ch \mathbf R\Gamma_{X^{[n]}}(\shfO_{X^{[n]}})}.
  \end{aligned}
\end{equation}
\end{Theorem}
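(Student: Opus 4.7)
The plan is to take the $\tilde H$-expansion \eqref{eq:Zkernel2} of $Z(x,y;q,t,u,v)$, substitute the geometric identity \thmref{thm:HilbHilb} in place of the Macdonald kernel $\tilde H_\mu[1-uA_\lambda(q,t);q,t]/\Omega[uB_\lambda(q,t)]$, and then collapse the resulting sum over Macdonald partitions by means of a Cauchy-type identity, reindexing the final expression by Schur partitions.

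First I would substitute \thmref{thm:HilbHilb} into the $\tilde H$-expansion of $Z$, which converts the Macdonald ratio into the Hilbert-scheme ratio
\[
\frac{\sum_{n\ge 0} u^n \ch \mathbf R\Gamma_{X^{[n]}}\bigl((\omega\nabla^{-1}\tilde H_\lambda)[\uE;q,t]\otimes (\omega\nabla^{-1}\tilde H_\mu)[\uE;q,t]\bigr)}{\sum_{n\ge 0} u^n \ch \mathbf R\Gamma_{X^{[n]}}(\shfO_{X^{[n]}})},
\]
using \eqref{eq:S3} to identify $\Omega[-u/((1-q)(1-t))]$ with the reciprocal of the denominator. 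After this substitution the $\lambda$-dependence cleanly splits into the outside factor $\nabla^x\tilde H_\lambda(x;q,t)/\langle\tilde H_\lambda,\tilde H_\lambda\rangle_*$ and the inside plethystic factor $(\omega\nabla^{-1}\tilde H_\lambda)[\uE;q,t]$; the $\mu$-dependence splits in the same way.

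Next I would evaluate the $\lambda$-sum. Since $\nabla^x\tilde H_\lambda(x)=q^{n(\lambda^t)}t^{n(\lambda)}\tilde H_\lambda(x)$ while $(\nabla^z)^{-1}\tilde H_\lambda(z)=q^{-n(\lambda^t)}t^{-n(\lambda)}\tilde H_\lambda(z)$, the two $\nabla$-factors cancel pairwise and \eqref{eq:Cauchy} (together with $\omega\tilde\Omega=\Omega$) gives
\[
\sum_\lambda \frac{\nabla^x\tilde H_\lambda(x;q,t)\,\omega(\nabla^z)^{-1}\tilde H_\lambda(z;q,t)}{\langle\tilde H_\lambda,\tilde H_\lambda\rangle_*} = \Omega\!\left[\frac{XZ}{(1-q)(1-t)}\right].
\]
Expanding this Cauchy kernel in Schur functions as $\sum_\sigma s_\sigma[X/(1-t)]\,s_\sigma[Z/(1-q)]$ and then specializing the dummy variable $Z$ to the universal-sheaf class $\uE$ converts the summand inside $\mathbf R\Gamma$ into $s_\sigma[\uE/(1-q)]=s_\sigma[\calE/\{x=0\}]$; the last identification uses that, at any fixed point $I_\nu$, the Koszul factor $1-t$ of $[\{x=0\}]\in K_{\C^*\times\C^*}(\C^2)$ cancels against the corresponding tangent weight in the Atiyah-Bott formula, leaving character $A_\nu(q,t)/(1-q)$. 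The $\mu$-sum is dispatched symmetrically on the $y$-side with $\uE/(1-t)=\calE/\{y=0\}$.

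Finally, relabeling the two Schur indices as $\lambda$ and $\mu$ and tensoring the two resulting classes inside $\mathbf R\Gamma_{X^{[n]}}$, the prefactor $v^{-|\lambda||\mu|}(tu)^{-\nicefrac{(|\lambda|+|\mu|)}{2}}$ is preserved because the Cauchy expansion is degree-preserving, and one obtains the formula of \thmref{thm:main}. The main bookkeeping point, rather than a genuine obstacle, is to confirm that the formally infinite Schur-Cauchy expansion interacts correctly with $\mathbf R\Gamma_{X^{[n]}}$ and with the $u^n$-grading; this is unproblematic because at each torus fixed point $I_\nu\in X^{[n]}$ the class $\uE$ has finite character $A_\nu(q,t)$, so each Schur functor evaluates to a well-defined virtual character and the Atiyah-Bott-Lefschetz argument used in the proof of \thmref{thm:HilbHilb} applies term by term.
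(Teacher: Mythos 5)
Your proposal is correct and follows essentially the same route as the paper: substitute \thmref{thm:HilbHilb} into the $\nabla$-twisted $\tilde H$-expansion defining $Z$, cancel the $\nabla$ factors, collapse the $\lambda$- and $\mu$-sums via the Cauchy formula \eqref{eq:Cauchy} into Schur expansions of $\Omega[XZ/(1-q)(1-t)]$, and identify $\uE/(1-q)$ and $\uE/(1-t)$ with the slant products $\calE/\{x=0\}$ and $\calE/\{y=0\}$. Your explicit remarks on the Koszul factor and on the degree-preservation needed to pull the prefactor $v^{-|\lambda||\mu|}(tu)^{-\nicefrac{(|\lambda|+|\mu|)}2}$ through the Cauchy expansion are correct details that the paper leaves implicit.
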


Note that $s_\lambda[X/(1-t)]$, $s_\mu[Y/(1-q)]$ give bases for
$\bigoplus K_{\C^*\times\C^*}(L^{[n]}_y)$, $\bigoplus
K_{\C^*\times\C^*}(L^{[n]}_x)$ respectively by \subsecref{subsec:lag}.
Therefore the above is a class in 
$\bigoplus K_{\C^*\times\C^*}(L^{[m]}_x\times L_y^{[n]})$.

Let us give several further remarks on this expression.

\begin{Remarks}
  (1) The second term in the right hand side is a polynomial in $u$ as
  manifest in \thmref{thm:symmetry}, though it is {\it a priori\/} a
  formal power series in $u$. Then \eqref{eq:ZZ} is an equality in a
  formal power series in $u^{-1}$. Otherwise it contains infinitely
  many both negative and positive powers of $u$, and hence is not
  well-defined.

  (2) The class $s_\lambda[\calE/\{ y=0\}]$ is {\it not\/} integral
  (even for $\lambda = \square$). But the denominator is somewhat
  controllable as follows. Let us rewrite the above as
  \begin{equation*}
      Z(x,y;q,t,u,v)
  = \sum_{\lambda,\mu}
  \begin{aligned}[t]
  & v^{-|\lambda||\mu|}(tu)^{-\nicefrac{(|\lambda|+|\mu|)}2}
   s_\lambda[\frac{X}{(1-t)(1-q)}] s_\mu[\frac{Y}{(1-t)(1-q)}]
\\
  &\quad\times  
  \frac{
    \sum_{n=0}^\infty u^n \ch\mathbf R\Gamma_{X^{[n]}}(
    s_\lambda[\calE/[0]]\otimes s_\mu[\calE/[0]])
  }
  {\sum_{n=0}^\infty u^n \ch \mathbf R\Gamma_{X^{[n]}}(\shfO_{X^{[n]}})}.
  \end{aligned}
  \end{equation*}
  By \corref{cor:empty}, this is equal to
  \begin{equation*}
    \sum_{\lambda,\mu,\nu}v^{-|\lambda||\mu|}
    (tu)^{-\nicefrac{(|\lambda|+|\mu|)}2} N_{\lambda\mu}^\nu
    s_\lambda[\frac{X}{(1-t)(1-q)}] s_\mu[\frac{Y}{(1-t)(1-q)}]
    (\nabla s_\nu)[1-u], 
  \end{equation*}
  where $N_{\lambda\mu}^\sigma$ is the (ordinary)
  Littlewood–Richardson coefficient. We write
  \begin{equation*}
    s_\lambda(x)
    = \sum_\nu X_{\lambda\nu}(t) s_\nu[(1-t){X}]
    = \sum_\nu X_{\lambda\nu}(t) S_\nu(x;t),
  \end{equation*}
  where $S_\nu(x;t)$ is as in \cite[III\S4]{Mac}. Replacing $X$ by
  $X/(1-t)(1-q)$ we get
  \begin{equation*}
    s_\lambda[\frac{X}{(1-t)(1-q)}]
    = \sum_\nu X_{\lambda\nu}(t) s_\nu[\frac{X}{1-q}].
  \end{equation*}
  This means that though $Z$ is not an integral class in
  $K_{\C^*\times\C^*}(L_x^{[m]}\times L_y^{[n]})$, the denominator
  only comes from $X_{\lambda\nu}(t)$ and $X_{\mu\sigma}(q)$.
  The transition matrix $X_{\lambda\nu}(t)$ is given in
  \cite[III\S6]{Mac}, and the denominator is of the form
  \begin{equation*}
    b_\lambda(t)^{-1} = \prod_{i=1}^{l(\lambda)} \varphi_{m_i}(t)^{-1},
  \end{equation*}
  where $\lambda = (1^{m_1}2^{m_2}\cdots)$ and $\varphi_m(t) =
  (1-t)(1-t^2)\cdots (1-t^m)$.
\end{Remarks}

\begin{NB}
  Suppose that $F$ is a class supported on the subvariety
  $\pi^{-1}(S^n(\{ x=0\}))$. We claim that 
  \begin{equation*}
    s_\lambda[F],
  \end{equation*}
  is well-defined as a class supported on the same subvariety.
  By an analog of \corref{cor:K}, $F\in K_{\C^*\otimes \C^*}(X^{[n]})$
  is supported on the subvariety if and only if $\Phi(F)\in
  \Q(q,t)\otimes \Lambda$ satisfies $\Phi(F)[(1-q)X]\in \Z[q^{\pm 1},
  t^{\pm 1}]\otimes \Lambda$. So we may assume $\Phi(F) =
  s_\mu[X/(1-q)]$. Then
  \begin{equation*}
    \Phi(s_\lambda[F]) = s_\lambda[s_\mu[\frac{X}{1-q}]]
    = (s_\lambda[s_\mu])[\frac{X}{1-q}],
  \end{equation*}
  where $s_\lambda[s_\mu]$ is the plethystic substitution of $s_\mu$
  into $s_\lambda$.
  \begin{NB2}
    Consider the power sum case first. We have
    \begin{equation*}
      p_\lambda[p_\mu[\frac{X}{1-q}]]
      = p_\lambda[\prod_i \frac{x_1^{\mu_i} + x_2^{\mu_i} + \dots}{1-q^{\mu_i}}]
      = \prod_{i,j} \frac{x_1^{\mu_i\lambda_j} + x_2^{\mu_i\lambda_j} + \dots}
      {1-q^{\mu_i\lambda_j}}.
    \end{equation*}
    On the other hand, we have
    \begin{equation*}
      (p_\lambda[p_\mu])[\frac{X}{1-q}]
      = \prod_{i,j} p_{\mu_i\lambda_j}[\frac{X}{1-q}]
      = \prod_{i,j} \frac{p_{\mu_i\lambda_j}}{1-q^{\mu_i\lambda_j}}.
    \end{equation*}
    Thus these are equal in this case. We expand $s_\lambda = \sum_\nu
    X^\nu_\lambda p_\nu$. Then
    \begin{equation*}
      s_\lambda[s_\mu[\frac{X}{1-q}]]
      = \sum_{\nu_1,\nu_2} X^{\nu_1}_\lambda X^{\nu_2}_\mu 
      p_{\nu_1}[p_{\nu_2}[\frac{X}{1-q}]]
      = \sum_{\nu_1,\nu_2} X^{\nu_1}_\lambda X^{\nu_2}_\mu 
      (p_{\nu_1}[p_{\nu_2}])[\frac{X}{1-q}]
      = (s_\lambda[s_\mu])[\frac{X}{1-q}].
    \end{equation*}
  \end{NB2}
  Now it is known that $s_\lambda[s_\mu]\in \Lambda$ \cite[I
  \S8]{Mac}. Therefore we have the assertion.

  \begin{NB2}
    It seems that $\Phi(s_\lambda[F]) = s_\lambda[\Phi(F)]$ is {\it
      not\/} true.
  \end{NB2}
\end{NB}

\begin{NB}
  Recall $\mathcal P^* = \mathcal
  L^{-1}\otimes\varepsilon\otimes\mathcal P$, where $\varepsilon$ is
  the sign representation of $S_n$.
\end{NB}

\begin{NB}
\subsection{Refined Littlewood-Richardson coefficients}

Let $d_{\lambda\mu}^\nu(q,t)$ be the rational function defined by
\begin{equation*}
  \tilde H_{\lambda}(x;q,t) \tilde H_{\mu}(x;q,t)
  = \sum_\nu d_{\lambda\mu}^\nu(q,t) \tilde H_\nu(x;q,t).
\end{equation*}
This is the $(q,t)$-analog of the Littlewood-Richardson coefficients.
Note that this is a finite sum. It is clear that $|\nu| =
|\lambda|+|\mu|$. And it is also known that $\lambda$, $\mu$ must be
contained in $\nu$ \cite[???]{Mac}.

Then the right hand side of \eqref{eq:HilbHilb} is
\begin{equation*}
  \sum_\nu d_{\lambda\mu}^\nu(q^{-1},t^{-1}) \sum_{n=0}^\infty u^n
  \mathbf R\Gamma_{X^{[n]}}((\omega\nabla^{-1}
  \tilde H_\nu)[\uE;q,t])).
\end{equation*}
Using \eqref{eq:qtdim} we further rewrite this as
\begin{equation*}
  \sum_\nu d_{\lambda\mu}^\nu(q^{-1},t^{-1}) \Omega[\frac{u}{(1-q)(1-t)}]
  \tilde H_\nu[1-u;q,t]
  =   \Omega[\frac{u}{(1-q)(1-t)}] \sum_\nu 
  \frac{d_{\lambda\mu}^\nu(q^{-1},t^{-1})}{\Omega[u B_\nu(q,t)]}.
\end{equation*}
Therefore
\begin{equation*}
  \frac{\tilde H_\mu[1 - u A_\lambda(q,t);q,t]}
  {\Omega[u B_\lambda(q,t)]}
  = \sum_\nu 
  \frac{d_{\lambda\mu}^\nu(q^{-1},t^{-1})}{\Omega[u B_\nu(q,t)]}.
\end{equation*}
\end{NB}

\appendix
\section{Partitions}\label{sec:part}

We collect some notation on partitions.

A partition $\lambda$ is a non-decreasing sequence $\lambda =
(\lambda_1\ge\lambda_2\ge\lambda_3\ge\cdots)$ of nonnegative integers
such that $\lambda_N = 0$ for $N\gg 0$. We usually do not write the
entry $\lambda_i$ if it is zero. We identify $\lambda$ with a Young
diagram $Y_\lambda$ as in \cite{MR2095899}. The transpose of $\lambda$
is denoted by $\lambda^t$. We consider $Y_\lambda$ as a subset of
boxes $s$ in the $xy$-plane. For a box $s$ in the $xy$-plane and a
Young diagram $Y$, we denote by $a_Y(s)$, $l_Y(s)$, $a'(s)$, $l'(s)$
the arm-length, leg-length, arm-colength, leg-colength respectively.
The hook length is defined by $h_Y(s) = a_Y(s)+l_Y(s)+1$.  We also
write it $a_\lambda(s)$, $l_\lambda(s)$, $h_\lambda(s)$ if $Y$
corresponds to a partition $\lambda$. (We shall use only deal with
rank $1$ case, i.e., the case of Hilbert schemes of points.) When a
box $s$ is in the Young diagram, we denote by $s\in Y_\lambda$, or
simply by $s\in\lambda$.
Since the diagram is clear from the context (because we only consider
`rank $1$' case), we simply denote them by $a(s)$, $l(s)$, $h(s)$.

Unfortunately our convention of arm/leg-(co)length functions is
different from \cite{IKV,GIKV}. Our Young diagram is rotated by
$90^\circ$ anticlockwise.

We give some standard notations and useful formulas:
\begin{equation*}
  \begin{split}
   & |\lambda| = \sum_i \lambda_i
   = \# \{ s\in\lambda\} = \text{the number of boxes in $Y_\lambda$},
\\
   & n(\lambda) = \sum_i (i-1)\lambda_i
   = \sum_{s\in Y_\lambda} l'(s) = \sum_{s\in Y_\lambda} l_Y(s),
\\
   & n(\lambda^t) = \sum_{s\in Y_\lambda} a'(s) = \sum_{s\in Y_\lambda} a_Y(s),
\\
   & \kappa(\lambda) = 2(n(\lambda^t) - n(\lambda)),
\\
   & \|\lambda\|^2 = \sum_i \lambda_i^2
   = 2 n(\lambda^t) + |\lambda|,
\\
   & B_\lambda(q,t) 
   = \sum_{s\in\lambda} q^{a'(s)} t^{l'(s)} = B_{\lambda^t}(t,q),
\\
   & A_\lambda(q,t) = 1 - (1-t)(1-q) B_\lambda(q,t) = A_{\lambda^t}(t,q),
\\
   & c_\lambda(q,t) = \prod_{s\in\lambda} (1 - q^{a(s)} t^{l(s)+1}),
\\
   & c'_\lambda(q,t) = \prod_{s\in\lambda} (1 - q^{a(s)+1} t^{l(s)})
   = c_{\lambda^t}(t,q).
  \end{split}
\end{equation*}

\section{Symmetric polynomials}\label{sec:symm}

Our notation for symmetric polynomials is standard, and follows
\cite{Mac}. We denote by $e_k$, $h_k$, $p_k$ the $k^{\mathrm{th}}$
elementary, complete, and power sum symmetric functions. Schur
functions corresponding to a partition $\lambda$ is denoted by
$s_\lambda$.
\begin{NB}
\begin{equation*}
    \sum_{n=0}^\infty h_n t^n = \exp\left(
    \sum_{k=1}^\infty \frac{p_k t^k}k \right), \qquad
    \sum_{n=0}^\infty e_n t^n = \exp\left(-
    \sum_{k=1}^\infty \frac{p_k (-t)^k}k \right).
\end{equation*}
\begin{equation*}
   s_{\lambda}(q_2^{-\rho}) = s_\lambda(q^{\frac12},q^{\frac32},
   q^{\frac52},\dots)
   = q^{n(\lambda)+\frac{|\lambda|}2} \prod_{s\in\lambda}
    (1 - q^{h_\lambda(s)})^{-1}.
\end{equation*}
\end{NB}

The ring of symmetric functions $\varprojlim \Z[x_1,\dots, x_N]^{S_N}$
is denoted by $\Lambda$. The Schur functions $s_\lambda$ define an
integral base of $\Lambda$.

An inner product on $\Lambda$ is defined by
\begin{equation*}
  \langle s_\lambda, s_\mu\rangle = \delta_{\lambda\mu}.
\end{equation*}

The ring $\Lambda$ is isomorphic to the direct sum $\bigoplus_n
R(S_n)$ of Grothendieck group of representations of the symmetric
group $S_n$ of $n$ letters. Here the multiplication is given by
\begin{equation*}
  V \cdot W = \operatorname{Ind}_{S_m\times S_n}^{S_{m+n}}(V\boxtimes W).
\end{equation*}
See \cite[I\S7]{Mac}. A Schur function $s_\lambda$ corresponds to an
irreducible character. The isomorphism is called the {\it Frobenius
  characteristic map}.

The degree $n$ subspace of $\Lambda$ is denoted by $\Lambda^n$. It is
isomorphic to $R(S_n)$.

\bibliographystyle{myamsplain}
\bibliography{nakajima,mybib,refined}

\end{document}